\theoremstyle{plain}
\newtheorem*{thm*}{Theorem} 
\newtheorem{thm}{Theorem} [section]
\newtheorem*{thm2*}{Theorem \ref{thmDecNilp}}
\newtheorem*{thm3*}{Theorem \ref{thmDecNilp2}}
\newtheorem{thmA}{Theorem}
\newtheorem{lem}[thm]{Lemma}
\newtheorem*{lem*}{Lemma}
\newtheorem{lemA}{Lemma}
\newtheorem{prop}[thm]{Proposition}
\newtheorem*{prop*}{Proposition}
\newtheorem{cor}[thm]{Corollary}
\newtheorem*{cor*}{Corollary}
\theoremstyle{definition}
\newtheorem*{defn*}{Definition}
\newtheorem{conjecture}{Conjecture}
\newtheorem*{conjecture*}{Conjecture}
\newtheorem{exmp}[thm]{Example}
\newtheorem*{exmp*}{Example}
\newtheorem{prob}{Problem}
\newtheorem*{prob*}{Problem}
\newtheorem{ques}{Question}
\newtheorem*{ques*}{Question}
\newtheorem{remk}[thm]{Remark}
\newtheorem*{remk*}{Remark}
\def\nn{\mathbb{N}}
\def\zz{\mathbb{Z}}
\def\qq{\mathbb{Q}}
\def\rr{\mathbb{R}}
\def\ff{\mathbb{F}}
\def\gcd{{\rm gcd}}
\def\disp{\displaystyle}
\def\tr{{\rm tr}}
\def\mb#1{\mathbf{#1}}
\def\Spec{\textup{Spec}}
\begin{document}

\title{Cyclotomic Matrices and Power Difference Sets}



\author{Wei-Liang Sun}
\address{Department of Mathematics, National Kaohsiung Normal University, Kaohsiung 82444, Taiwan, ROC}
\email{wlsun@mail.nknu.edu.tw}

\maketitle

\begin{abstract}
The cyclotomic matrix is commonly used to arrange cyclotomic numbers in a convenient format. A natural question is whether the structure of the matrix can reflect properties of these numbers. In this article, we examine cyclotomic numbers through their associated cyclotomic matrix and reveal an algebraic structure by relating it to a basis element of a Schur ring. This viewpoint leads to structural identities and reinterpretations of classical results. As an application, we investigate the power difference set problem and establish conditions expressed through cyclotomic matrices, including spectral and determinant characterizations. \\

\noindent Keywords: cyclotomic numbers, cyclotomic matrices, Schur rings, difference sets.
\end{abstract}

\section{Introduction}
\label{sec1}

Cyclotomic numbers are defined in finite fields as the cardinalities of intersections of certain subsets. Although the concept itself was developed later, its origins trace back to Gauss's study of cyclotomy and the constructibility of regular polygons \cite[Section~358]{Gau}. L.E. Dickson subsequently investigated these numbers in relation to Waring's problem \cite{Dic, Dic3, Dic4}. Building upon these pioneering works, extensive research on cyclotomic numbers has flourished, including applications in combinatorial design, cryptography, and information theory. Additional insights can be found in various surveys \cite{Raj, Kat2, AhmTan}. A conjecture concerning cyclotomic numbers and their connection to {\it conflict-avoiding codes} is discussed in \cite{HsiLiSun, HsiLiSun2}.

Specifically, consider a finite field $\ff_q$ and a divisor $\ell$ of $q-1$. Let $K$ denote the multiplicative subgroup consisting of the $\ell$-th powers of the nonzero elements of $\ff_q$. The $\ell^2$ {\it cyclotomic numbers} are defined as $$(i, j) = \left| (1 + g^i K) \cap g^j K \right|$$ where $0 \leq i, j \leq \ell-1$ and $g$ is a primitive root of $\ff_q$. One of the central challenges in the study of cyclotomic numbers is to determine these values in terms of solutions to a certain Diophantine system, as well as through the discovery of Gauss sums and Jacobi sums, which lead to explicit formulas. However, finding such formulas becomes increasingly difficult as $\ell$ grows, and there appears to be no general method that works uniformly for various values of $\ell$. 

The {\it cyclotomic matrix} is defined as $\left[ (i, j) \right]_{0 \leq i, j \leq \ell-1}$. This matrix is usually presented solely to display cyclotomic numbers for convenience, and to the best of our knowledge, no prior studies have focused on its matrix structure. The motivation of this article is to study cyclotomic numbers through the lens of cyclotomic matrices. To this end, we establish a connection to {\it Schur rings}, which are certain subrings within a group ring. Cyclotomic matrices are then related to matrix representations of basis elements in the Schur ring. This provides an alternative perspective for understanding cyclotomic numbers beyond classical approaches. We prove that a certain type of conjugations of the cyclotomic matrix exhibits behavior similar to that of a basis element in the corresponding Schur ring. The indices $i$ and $j$ of the cyclotomic number $(i, j)$ can naturally be extended to arbitrary integers. Let $k = \frac{q-1}{\ell}$. To ensure consistency and convenience in computation, we index the entries of each matrix in this article from $0$ to $\ell-1$. 

\begin{lemA}
\label{lemA:1}
Denote $A_x = \left[ (i-x, j-x) \right]_{0 \leq i, j \leq \ell-1}$ for $x \in \zz$. For $0 \leq u, v \leq \ell-1$, one has 
\begin{align*} A_u A_v = k \left(\delta_{u, v+\frac{q-1}{2}} I_{\ell} - E_{u+\frac{q-1}{2}, v}\right) + \sum_{w=0}^{\ell-1} (u-v, w-v) A_w, \end{align*} where $I_{\ell}$ is the $\ell$-by-$\ell$ identity matrix, $\delta_{s,t}$ is defined to be $k$ if $s \equiv t \pmod{\ell}$ and $0$ otherwise, and $E_{s,t}$ is the all-zero matrix except for a $1$ in the $(s, t)$-entry.
\end{lemA}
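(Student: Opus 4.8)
The plan is to lift the matrices $A_x$ into the group algebra $\zz[\ff_q]$ of the additive group of $\ff_q$, where the cyclotomic numbers occur as honest structure constants of a Schur ring, and then push the resulting algebra relation back down to matrices. Write $C_i = g^i K$ for the $\ell$ cosets, let $[x]$ be the basis element of $\zz[\ff_q]$ attached to $x\in\ff_q$ (so $[x][y]=[x+y]$), and set $T_i=\sum_{x\in C_i}[x]$. Since $-1=g^{(q-1)/2}$ lies in $C_{m_0}$ with $m_0\equiv\frac{q-1}{2}\pmod\ell$, I record the congruence $2m_0\equiv q-1\equiv 0\pmod\ell$, which I will use to fold indices. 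First I would compute $T_iT_j=\sum_{x\in C_i,\,y\in C_j}[x+y]$ by collecting the coefficient of a fixed $[z]$: substituting $x=zw$ shows this coefficient depends only on the coset of $z$, while the pairs with $x+y=0$ are isolated into the identity term. This gives the Schur ring relation $T_iT_j=k\,\delta_{i,\,j+m_0}\,[0]+\sum_{s=0}^{\ell-1}(i-s+m_0,\,j-s)\,T_s$, with $\delta$ the ordinary Kronecker symbol.

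Next I would locate $A_x$ inside the regular representation of this Schur ring. Let $L_a$ denote left multiplication by $T_a$ on the ordered basis $([0],T_0,\dots,T_{\ell-1})$. The $\ell\times\ell$ block of $L_a$ indexed by the $T$'s has $(s,j)$-entry $(a-s+m_0,\,j-s)$; the classical symmetry $(i,j)=(-i,\,j-i)$, coming from the substitution $x\mapsto x^{-1}$, rewrites this as $(s-(a+m_0),\,j-(a+m_0))$, so that block is exactly $A_{a+m_0}$. In other words $A_x$ is the compression of $L_{x-m_0}$ to the codimension-one span of the $T_s$, and the whole point is that this compression is \emph{not} multiplicative, which is precisely what forces the correction term in the statement.

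The identity should then fall out of block multiplication. In $2\times2$ block form $L_a=\left(\begin{smallmatrix}A_{a+m_0}&e_a\\ k\,e_{a-m_0}^\top&0\end{smallmatrix}\right)$, where $e_a$ is the $a$-th standard vector, so the top-left block of $L_aL_b$ equals $A_{a+m_0}A_{b+m_0}+k\,E_{a,\,b-m_0}$. On the other hand, expanding $L_aL_b=k\,\delta_{a,\,b+m_0}L_{[0]}+\sum_s(a-s+m_0,\,b-s)L_s$ via the relation of the first paragraph produces the same block as $k\,\delta_{a,\,b+m_0}I_\ell+\sum_s(a-s+m_0,\,b-s)A_{s+m_0}$. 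Equating the two and putting $u=a+m_0$, $v=b+m_0$, $w=s+m_0$, I would use $2m_0\equiv0$ to simplify $E_{a,\,b-m_0}=E_{u+m_0,\,v}=E_{u+\frac{q-1}{2},\,v}$ and $\delta_{a,\,b+m_0}=\delta_{u,\,v+\frac{q-1}{2}}$, arriving at $A_uA_v=k\big(\delta_{u,\,v+\frac{q-1}{2}}I_\ell-E_{u+\frac{q-1}{2},\,v}\big)+\sum_{w}(u-w+m_0,\,v-w)A_w$.

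The remaining, and genuinely fiddly, step is to show that the structure constant $(u-w+m_0,\,v-w)$ coincides with the claimed $(u-v,\,w-v)$; this is where I expect the main obstacle to sit. I would clear it using the two classical cyclotomic symmetries $(i,j)=(-i,\,j-i)$ and $(i,j)=(j+m_0,\,i+m_0)$ — the latter from the involution $x\mapsto-(1+x)$ — applied in the order (S3), (S2), (S3), each time invoking $2m_0\equiv0$ to absorb the shifts, which collapses $(u-w+m_0,\,v-w)$ exactly to $(u-v,\,w-v)$. A purely combinatorial double-count of $(A_uA_v)_{ij}=\sum_m(i-u,\,m-u)(m-v,\,j-v)$ against $\sum_w(u-v,\,w-v)(i-w,\,j-w)$ is possible as an alternative, but it requires an explicit bijection of solution sets together with separate treatment of the degenerate cases $1+x=0$, so I would prefer the Schur ring route, in which associativity supplies that bijection for free and the degenerate cases are packaged cleanly into the identity component $[0]$.
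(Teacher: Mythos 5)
Your proof is correct and follows essentially the same route as the paper: establish the Schur-ring multiplication rule for the coset sums in the group algebra of $(\ff_q,+)$, realize each $A_x$ as a block of the regular representation, and obtain the identity by block multiplication, with the $\delta$- and $E$-terms coming from the identity component. The only difference is one of convention: the paper uses the right regular representation in row form, so the block of $[\alpha_v]$ is literally $A_v$ and the structure constants are literally $(u-v,w-v)$, which makes your final index-folding step (the S3, S2, S3 chain using $2m_0\equiv 0$ to turn $(u-w+m_0,v-w)$ into $(u-v,w-v)$) unnecessary there.
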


The matrix $A_0$ is the cyclotomic matrix defined previously, and it will also be simply denoted by $A$. Lemma~\ref{lemA:1} will show that every cyclotomic matrix is nearly a {\it normal matrix} (see the discussion preceding Example~\ref{exmp: normality for 7^3}). Moreover, it also leads to the identity involving the sum of squares of cyclotomic numbers, which has also been derived via a different approach in \cite[Lemma~2.2]{BetHirKomMun}. The entries on both sides of the equation in Lemma~\ref{lemA:1} yield an identity among cyclotomic numbers (Theorem~\ref{thm: product of cyclotomic numbers}), which generalizes \cite[Theorem~6.1 and 6.2]{Sna} by Ernst Snapper.

One of the purposes for introducing cyclotomic matrices is to study the problem of {\it power difference sets}, which asks whether the subgroup $K$ consisting of $\ell$-th powers is a {\it difference set} of $\ff_q$--- that is, whether there exists a positive integer $\lambda$ such that every nonzero element of $\ff_q$ can be expressed as a difference of two elements in $K$ exactly $\lambda$ times. (Here we exclude the trivial case $\ell=1$, as $K = \ff_q^{\times}$ in that case.) This problem can be traced back to Raymond Paley \cite{Paley} and Sarvadaman Chowla \cite{Chowla}, and it still remains open. In~\cite{Leh3}, Emma Lehmer rephrased the power difference set problem in term of cyclotomic numbers, and proved that $\ell$ must be even if $K$ is a difference set. Since then, many studies on cyclotomic numbers have followed, and a conjecture has emerged. 

\begin{conjecture*}
If the multiplicative subgroup of $\ell$-th powers forms a difference set of $\ff_p$, then $\ell = 2, 4, 8$.
\end{conjecture*}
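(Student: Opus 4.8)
The plan is to convert the difference-set hypothesis into a single matrix identity for the cyclotomic matrix $A$ and then analyze it through the near-normality supplied by Lemma~\ref{lemA:1}. First I would record the cyclotomic reformulation going back to Lehmer: counting representations $g^j = a - b$ with $a, b \in K$ and dividing by $b$ shows that the number of such representations equals the cyclotomic number $(j,0)$, so $K$ is a difference set with parameter $\lambda$ exactly when the first column of $A$ is constant, $(0,0) = (1,0) = \cdots = (\ell-1,0) = \lambda$. Using the standard row- and column-sum relations for cyclotomic numbers, together with the product rule of Lemma~\ref{lemA:1}, I would promote this column condition to a global identity of the form $B B^{\mathsf{T}} = (k-\lambda) I_\ell + \lambda J_\ell$, where $J_\ell$ denotes the all-ones matrix and $B$ is the matrix assembled from $A$ (incorporating the $-1 \in K$ correction that already forces $\ell$ to be even). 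This is precisely the $\ell \times \ell$ shadow of the group-ring relation $\underline{K}\,\underline{K}^{(-1)} = (k-\lambda) + \lambda\,\underline{\ff_p}$ characterizing difference sets.

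Second, I would extract spectral and determinant consequences. Because Lemma~\ref{lemA:1} shows $A$ differs from a normal matrix only by the explicit rank-one terms $k E_{s,t}$, its eigenvalues are governed by the Gaussian periods $\eta_0, \dots, \eta_{\ell-1}$ attached to the cosets of $K$; the identity $B B^{\mathsf{T}} = (k-\lambda) I_\ell + \lambda J_\ell$ then forces every nonprincipal eigenvalue to have modulus $\sqrt{k-\lambda}$. Equivalently, all nontrivial Gauss sums of $K$ share the single absolute value $\sqrt{k-\lambda}$. The determinant characterization supplies the complementary arithmetic constraint $\det B = \pm\, k\,(k-\lambda)^{(\ell-1)/2}$, which pins $k-\lambda$ to a square of prescribed type and controls how $p$ splits in $\qq(\zeta_\ell)$. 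Both statements are naturally housed in the Schur-ring framework, since $B$ is conjugate to a basis element whose structure constants are exactly the cyclotomic numbers.

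The number-theoretic endgame is where the real difficulty lies. The Gaussian periods are algebraic integers in $\qq(\zeta_\ell)$ permuted transitively by $\Gal(\qq(\zeta_\ell)/\qq)$, so the requirement that every nonprincipal period have the same modulus $\sqrt{k-\lambda}$ is extraordinarily rigid: applying complex conjugation and Galois descent makes the products $\eta_i \overline{\eta_i}$ simultaneously rational and constant, and a Gauss-sum valuation (Stickelberger-type) analysis should then bound $\ell$. The hard part is making this bound effective and uniform: the equal-modulus condition alone does not exclude large $\ell$ without an additional $p$-adic or prime-by-prime argument on the Gauss sums, and indeed collapsing the list to exactly $\ell \in \{2, 4, 8\}$ for every prime $p$ is the open content of the conjecture. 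I would therefore expect the matrix approach to reduce the problem cleanly to a sharp statement about common Gauss-sum absolute values, with the genuine obstacle being the lack of a uniform modulus bound forcing $\ell$ into $\{2,4,8\}$.
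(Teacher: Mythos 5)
There is a fundamental problem here: the statement you were asked to prove is an \emph{open conjecture}. The paper does not prove it --- it explicitly says the power difference set problem ``still remains open,'' states this as a Conjecture (not a theorem), and surveys the partial evidence: existence results for $\ell = 2, 4, 8$ (Paley, Chowla, Lehmer) and nonexistence results obtained case by case for $\ell = 6, 10, 12, 14, 16, 18, 20, 22, 24, 26, 32$ via explicit cyclotomic-number formulas or computation. Your proposal also does not prove it, and to your credit you concede this in your final paragraph: the ``number-theoretic endgame'' --- a uniform bound forcing $\ell \in \{2,4,8\}$ for every prime $p$ --- is exactly the content of the conjecture, and you leave it open. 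So the genuine gap is total: everything preceding that admission is reformulation and reduction, not proof. In fact, the reductions you sketch largely parallel results the paper \emph{does} prove as theorems (Lehmer's criterion that the first column of $A$ is constant, the Gram-matrix identity for $B$ in Theorem~\ref{thmA:1}, the spectral and determinant statements in Theorem~\ref{thmA:3} and its corollary), and the paper's point is precisely that even with these structural identities in hand, the conjecture remains out of reach. The equal-modulus property you invoke --- all nontrivial character sums over a $(q,k,\lambda)$-difference set having absolute value $\sqrt{k-\lambda}$ --- is a standard consequence of the difference-set axiom for \emph{any} $\ell$, so by itself it cannot distinguish $\ell \in \{2,4,8\}$ from larger $\ell$; no Stickelberger-type valuation argument is known that does.

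Beyond the missing endgame, two of your intermediate claims are misstated relative to what actually holds. The Gram identity is $B^{T} B = (k-\lambda)\left(\lambda J_{\ell-1} + I_{\ell-1}\right)$, where $B$ is an $(\ell-1)\times(\ell-1)$ submatrix of $A$ (remove the row indexed $q'$ and the column indexed $0$); your version $B B^{T} = (k-\lambda) I_{\ell} + \lambda J_{\ell}$ has the wrong size and the wrong coefficient on $J$ (it should be $\lambda(k-\lambda)$, not $\lambda$). And your determinant formula $\det B = \pm\, k\,(k-\lambda)^{(\ell-1)/2}$ cannot be right: since the difference-set hypothesis forces $\ell$ to be even, $(\ell-1)/2$ is not an integer; the correct values are $\det(A) = -\lambda (k-\lambda)^{\frac{\ell}{2}-1}$ and $\det(B) = (-1)^{\frac{\ell}{2}-1}(k-\lambda)^{\frac{\ell}{2}}$, with no factor of $k$. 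These errors matter because you lean on the determinant to ``pin $k-\lambda$ to a square of prescribed type,'' a step that dissolves once the formula is corrected.
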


Most existing approaches to this conjecture rely on explicit formulas for cyclotomic numbers, and---as previously mentioned---the complexity increases significantly as $\ell$ grows. In order to explore the structural implications behind this conjecture, we investigate the behavior of the cyclotomic matrix when a power difference set arises. Based on Lemma~\ref{lemA:1}, we derive necessary and sufficient conditions for the existence of power difference sets, each of which is independent of the choices of primitive roots.

\begin{thmA}
\label{thmA:1}
Let $K$ be the set of all $\ell$-th powers in $\ff_q^{\times}$, where $\ell \mid q-1$. Let $(i,j)$ denote the cyclotomic numbers with respect to $\ell$ (and a chosen primitive root), and let $A$ be the corresponding cyclotomic matrix. Set $k = |K|$ and $q' = \frac{q-1}{2}$. The following statements are equivalent: 
\begin{enumerate}
\item
$K$ is a difference set of $\ff_q$.
\item
$k$ is odd and $\disp\sum_{i=0}^{\ell-1} (i, j)^2 = \sum_{i=0}^{\ell-1} (i, q')^2$ for some $j$ with $\gcd(j, \ell) = 1$.
\item
$k$ is odd and the main diagonal entries of $A^T A$ are of the form $(a, b, \ldots, b)$.
\end{enumerate}

In this case, $$A^T A = \lambda k J_{\ell} + (k-\lambda) I_{\ell} - k E_{0,0}$$ where $\lambda = \frac{k-1}{\ell}$, $J_{n}$ denotes the $n \times n$ all-one matrix, and $E_{s,t}$ denotes the matrix with a $1$ in the $(s, t)$-entry and zeros elsewhere.

Moreover, if $B$ is the submatrix obtained by removing the entries $\{(i, 0), (q', j)\}_{0 \leq i, j \leq \ell-1}$ from $A$, then $$B^T B = (k - \lambda) (\lambda J_{\ell-1} + I_{\ell-1}).$$ 
\end{thmA}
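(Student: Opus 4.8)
The plan is to reduce everything to the behaviour of the diagonal $d_m := (m,m)$ of $A$ and to exploit two classical symmetries of cyclotomic numbers: $(i,j) = (-i,\,j-i)$, and — when $k$ is odd, so that $-1 \notin K$ — the relation $(i,j) = (j+q',\,i+q')$ with $q' \equiv \ell/2 \pmod\ell$. First I would record the representation count $N(g^j) := |\{(a,b)\in K\times K : a-b = g^j\}| = (-j,-j)$, obtained by dividing $a-b=g^j$ by $g^j$; this shows $K$ is a difference set if and only if $d$ is constant (necessarily $d_m \equiv \lambda = \frac{k-1}{\ell}$). A short parity argument — the involution $(a,b)\mapsto(-b,-a)$ on representations, whose fixed points detect the coset of $2$ — shows that a difference set forces $k$ odd, so in (1)--(3) we may assume $k$ odd throughout; the two symmetries then yield that $d$ is $(\ell/2)$-periodic, that $A^T = A_{q'}$, and that $(q',w) = (w+q',0) = d_{-w}$.

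Next I would feed $A^TA = A_{q'}A_0$ into Lemma~\ref{lemA:1} (with $u=q'$, $v=0$, noting $2q'\equiv 0$) to get $A^TA = kI_\ell - kE_{0,0} + \sum_{w}(q',w)A_w$, and separately evaluate $\sum_w A_w = kJ_\ell - I_\ell$ from the identity $\sum_a (a,a+t) = k - [t\equiv 0]$. Writing $c_w := (q',w)$ and $S_j := \sum_i (i,j)^2$ for the $j$-th diagonal entry of $A^TA$, the diagonal of the first identity reads $S_j = k - k[j{=}0] + (c*d)_j$, a convolution over $\zz/\ell$. The decisive structural input, which I verify from the symmetries above, is that $c$ is the reflection of $d$, i.e. $c_w = d_{-w}$. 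Consequently $\widehat{c*d}(\chi) = \widehat d(\chi)\,\overline{\widehat d(\chi)} = |\widehat d(\chi)|^2 \ge 0$ for every character $\chi$ of $\zz/\ell$, while $(\ell/2)$-periodicity of $d$ confines the support of $\widehat d$ to characters with $\chi(q')=1$. Combining these, for every $j\neq 0$ I obtain the clean identity $S_{q'} - S_j = \tfrac1\ell\sum_{\chi\neq 1}|\widehat d(\chi)|^2\,(1-\re\,\chi(j)) \ge 0$, the main new computation.

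With this inequality the equivalences fall out. For (1)$\Rightarrow$(3): if $d$ is constant then $\widehat d(\chi)=0$ for all nontrivial $\chi$, so $S_j = S_{q'}$ for every $j\neq 0$ and the diagonal of $A^TA$ has the form $(a,b,\dots,b)$; and (3)$\Rightarrow$(2) is immediate on taking $j=1$. The crux is (2)$\Rightarrow$(1): here $\gcd(j_0,\ell)=1$ guarantees $\chi(j_0)\neq 1$, hence $1-\re\,\chi(j_0) > 0$, for every nontrivial $\chi$, so the single equality $S_{j_0}=S_{q'}$ forces $|\widehat d(\chi)|^2 = 0$ for all nontrivial $\chi$ (all of which already lie in the support of $\widehat d$), i.e. $d$ is constant and $K$ is a difference set. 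It is exactly the coprimality of $j_0$ that upgrades one scalar equation into the vanishing of every Fourier coefficient. I expect pinning down the identity $c_w = d_{-w}$ together with this positivity-plus-support step to be the main obstacle, as the reduction to $k$ odd and the convolution bookkeeping are routine.

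Finally, for the displayed formulas I specialize to the difference-set case $d\equiv\lambda$: then $\sum_w c_w A_w = \lambda(kJ_\ell - I_\ell)$, so $A^TA = kI_\ell - kE_{0,0} + \lambda(kJ_\ell - I_\ell) = \lambda k J_\ell + (k-\lambda)I_\ell - kE_{0,0}$, as claimed. The same symmetries give $(q',i) = d_{-(i+q')} = \lambda$ for every $i$, so the deleted row $q'$ is constant equal to $\lambda$; since $B$ removes column $0$ and row $q'$, for $i,j\neq 0$ one has $(B^TB)_{ij} = (A^TA)_{ij} - (q',i)(q',j) = \bigl(\lambda k + (k-\lambda)[i{=}j]\bigr) - \lambda^2 = (k-\lambda)\bigl(\lambda + [i{=}j]\bigr)$, which is precisely $B^TB = (k-\lambda)(\lambda J_{\ell-1} + I_{\ell-1})$.
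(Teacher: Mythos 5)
Your proof is correct, and while its skeleton coincides with the paper's --- the identity $A^T A = k I_\ell - k E_{0,0} + \sum_w (q', w) A_w$ from Lemma~\ref{lemA:1}, the evaluation $\sum_w A_w = k J_\ell - I_\ell$, an involution forcing $k$ odd (the paper's Lemma~\ref{lem: Lemma I in Leh3}, recast on representation pairs $(a,b)\mapsto(-b,-a)$ instead of $1+x\mapsto 1+x^{-1}$), and the entrywise subtraction of $\lambda^2$ for $B^T B$ --- your treatment of the crucial implication (2)$\Rightarrow$(1) takes a genuinely different route. The paper (Proposition~\ref{prop: dot product of columns} feeding Lemma~\ref{lem: sum of squares of (i,j)}) writes $S_j = k + \sum_i (i,0)(i-j,0)$, applies Cauchy--Schwarz, and analyzes the equality case in two steps: proportionality $(i,0)=c\,(i-j,0)$, then $c=1$ by summing, then cycling through the residues $tj \bmod \ell$ via coprimality to make all $(i,0)$ equal, finally invoking Lehmer's Theorem~\ref{thm: Theorem I, II, III in Leh3}. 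You instead do Fourier analysis on $\zz/\ell$: your autocorrelation $(c*d)_j$ is literally the same quantity $\sum_v (v,0)(v-j,0)$, but the equality analysis is replaced by positivity of the spectrum $|\widehat d(\chi)|^2$, so that the single equality $S_{j_0}=S_{q'}$ with $\gcd(j_0,\ell)=1$ (hence $\chi(j_0)\neq 1$ for every nontrivial $\chi$) annihilates all nontrivial Fourier coefficients in one stroke. Your key structural input $c_w = d_{-w}$ is correct: it needs the $(\ell/2)$-periodicity $d_{m+q'}=d_m$, which is immediate from $(m,m) \overset{*}{=} (m+q', m+q')$, as you assert. Your route buys a self-contained argument (Lehmer's criterion is rederived on the way via $N(g^j)=d_{-j}$) and makes the inequality $S_j \le S_{q'}$ conceptually transparent, whereas the paper's route is more elementary, using no characters. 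Two small points: your parenthetical that all nontrivial characters ``already lie in the support of $\widehat d$'' is backwards --- the support is contained in $\{\chi : \chi(q')=1\}$, and for the remaining nontrivial $\chi$ the coefficient vanishes automatically by periodicity --- but this only strengthens the conclusion, so nothing breaks; and, like the paper itself, you implicitly exclude the degenerate case $k=1$, where $d\equiv 0$ and $\lambda = 0$ fails to be a positive integer.
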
 

The matrix $B$ from the preceding theorem is then utilized to derive the following result, by applying a classical result on regular simplices due to Isaac Jacob Schoenberg \cite[Theorem~1]{Schoenberg}.

\begin{thmA}
\label{thmA:2}
If $K$ is a difference set with $q - k$ a perfect square, then either $\ell$ is divisible by $4$, or $\ell$ is a sum of two odd squares.
\end{thmA}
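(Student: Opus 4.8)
The plan is to read off from Theorem~\ref{thmA:1} that the columns of $B$ are the vertices of an explicit \emph{rational} regular simplex, to enlarge this simplex by one vertex using the hypothesis that $q-k$ is a perfect square, and then to feed the enlarged simplex into Schoenberg's classification. Write $n = k-\lambda$; since $\lambda = \frac{k-1}{\ell}$ one checks the two identities $n = 1+(\ell-1)\lambda$ and $q-k = \ell n$ (the latter from $q = k\ell+1$). By Theorem~\ref{thmA:1}, $B^{T}B = n(\lambda J_{\ell-1}+I_{\ell-1})$, so the $\ell-1$ columns $b_1,\dots,b_{\ell-1}$ of $B$ satisfy $\langle b_i,b_i\rangle = n(\lambda+1)$ and $\langle b_i,b_j\rangle = n\lambda$ for $i\neq j$, whence $|b_i-b_j|^2 = 2n$ for all $i\neq j$. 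Thus the $b_i$ are $\ell-1$ mutually equidistant points in $\zz^{\ell-1}$, i.e.\ the vertices of a regular $(\ell-2)$-simplex realized over $\zz$. Note that $\det(B^{T}B)=n^{\ell-1}(1+(\ell-1)\lambda)=n^{\ell}\neq 0$, so $B$ is invertible.

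Next I would adjoin an $\ell$-th vertex. Seeking a point $p$ with $|p-b_i|^2 = 2n$ for every $i$ forces $B^{T}p = s\,\mb{1}$ for a scalar $s$, and hence $p = s\,B^{-T}\mb{1}$. Substituting back reduces the common-distance requirement to a single quadratic equation in $s$; using $(I_{\ell-1}+\lambda J_{\ell-1})^{-1} = I_{\ell-1}-\frac{\lambda}{n}J_{\ell-1}$ one computes $\mb{1}^{T}(B^{T}B)^{-1}\mb{1} = \frac{\ell-1}{n^{2}}$, and the radicand of that quadratic then simplifies (via $(\lambda-1)(\ell-1)=n-\ell$) to $\frac{n^{3}\ell}{(\ell-1)^{2}}$. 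Consequently $s$, and therefore $p$, is rational exactly when $n\ell$ is a perfect square, which by $q-k=\ell n$ is precisely the hypothesis of the theorem. Adjoining $p$ thus produces $\ell$ mutually equidistant rational points; being a regular simplex they affinely span all of $\qq^{\ell-1}$, so they realize a regular $(\ell-1)$-simplex with rational coordinates in its minimal dimension.

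Finally I would invoke Schoenberg's Theorem~1: the existence of a rational regular simplex on $\ell$ vertices forces $\ell$ into the admissible list, namely $\ell$ an odd square, or $\ell\equiv 2\pmod 4$ with $\ell$ a sum of two squares, or $\ell\equiv 0\pmod 4$. Because $\ell$ is even when $K$ is a difference set (Lehmer), the odd-square case is vacuous, leaving $4\mid\ell$ or else $\ell\equiv 2\pmod 4$ with $\ell = x^{2}+y^{2}$. In the latter case reducing mod $4$ rules out $x,y$ both even and $x,y$ of mixed parity, so $x,y$ are both odd and $\ell$ is a sum of two odd squares, as claimed. The main obstacle is the second step: recognizing that the arithmetic hypothesis $q-k=\square$ is exactly the rationality condition for the extra vertex that moves the simplex from one dimension above its minimum (where $B$ alone lives) down to the minimal dimension required by Schoenberg's criterion. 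Once the vertex count is correctly pinned to $\ell$ rather than $\ell-1$, the Gram-matrix and discriminant computations are routine, as is the residue analysis upgrading ``two squares'' to ``two odd squares.''
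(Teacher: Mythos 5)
Your proposal is correct and is essentially the paper's own argument: both start from $B^TB=(k-\lambda)(\lambda J_{\ell-1}+I_{\ell-1})$, build a regular $(\ell-1)$-simplex with rational coordinates in $\qq^{\ell-1}$ whose existence hinges exactly on $\sqrt{q-k}\in\qq$, and then apply Schoenberg's theorem with $n=\ell-1$ odd to get $4\mid\ell$ or $\ell$ a sum of two odd squares. The only difference is presentational: you solve for the apex $p=s\,B^{-T}\mathbf{1}$ explicitly via the equidistance quadratic, while the paper translates the columns of $B$ by $z\mathbf{1}$ so that this apex becomes the origin; since $B^{T}\mathbf{1}=(k-\lambda)\mathbf{1}$, your $p$ is itself a rational multiple of the all-ones vector, so the two constructions coincide up to translation.
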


A power difference set $K$ is also called a $(q, k, \lambda)$-difference set, where $\lambda$ is defined as in Theorem~\ref{thmA:1}. The situation described in Theorem~\ref{thmA:2} arises when $K$ is a difference set with $\lambda = 1$, in which case a finite projective plane can be constructed from $K$.

Consider the matrix $M = \left[ (i + q', j) \right]_{0 \leq i, j \leq \ell-1}$ with $q'=\frac{q-1}{2}$, and let $S$ be the submatrix obtained by removing the entries $\{(i + q', 0), (0 + q', j)\}_{0 \leq i, j \leq \ell-1}$ from $M$. Then $M = P A$ and $S = P' B$ for some orthogonal matrices $P$ and $P'$. In particular, both $M$ and $S$ are symmetric matrices. Thus, Theorem~\ref{thmA:1} can be applied to determine the spectra of $M$ and $S$, where we denote the spectrum of a matrix $T$ as $\Spec(T) = \{a_1^{(n_1)}, a_2^{(n_2)}, \ldots \}$, with $a_i$ being the distinct eigenvalues and $n_i$ their multiplicities. For somplicity, we write $a_i^{(1)} = a_i$.

\begin{thmA}
\label{thmA:3}
Suppose that $K$ is a $(q, k, \lambda)$-difference set. Then $$\Spec(M) = \left\{\frac{k + \sqrt{k^2 - 4 \lambda}}{2}, \frac{k - \sqrt{k^2 - 4 \lambda}}{2}, \sqrt{k - \lambda}^{\left(\frac{\ell}{2}-1\right)}, -\sqrt{k - \lambda}^{\left(\frac{\ell}{2}-1\right)} \right\}$$ and $$\Spec(S) = \left\{k - \lambda, \sqrt{k - \lambda}^{\left(\frac{\ell}{2}-1\right)}, -\sqrt{k - \lambda}^{\left(\frac{\ell}{2}-1\right)} \right\},$$ where the eigenvectors of $M$ and $S$ corresponding to $\pm \sqrt{k - \lambda}$ are of the form $(0, w)$ and $w$, respectively, for some vector $w \in \rr^{\ell-1}$ whose entries sum to zero.

Moreover, the minimal polynomials of $M$ and $S$ are: $$(x^2 - k x + \lambda)) (x^2 - (k - \lambda)) \quad \text{and} \quad (x - (k - \lambda)) (x^2 - (k - \lambda)),$$ respectively.
\end{thmA}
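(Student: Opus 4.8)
The plan is to leverage the two identities of Theorem~\ref{thmA:1} together with the facts (noted just before the statement) that $M = PA$ and $S = P'B$ with $P,P'$ orthogonal and $M,S$ symmetric. Symmetry and orthogonality give $M^2 = M^TM = A^TP^TPA = A^TA$ and likewise $S^2 = B^TB$, so by Theorem~\ref{thmA:1},
\[
M^2 = \lambda k J_\ell + (k-\lambda)I_\ell - kE_{0,0}, \qquad S^2 = (k-\lambda)(\lambda J_{\ell-1} + I_{\ell-1}).
\]
The whole argument then reduces to taking square roots of these known matrices, the only genuine difficulty being to fix the signs of the eigenvalues of $M$ and $S$, since $M^2$ determines the eigenvalues of $M$ only up to sign. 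I would first record that $k^2 - 4\lambda > 0$ (because $\lambda = (k-1)/\ell \le (k-1)/2$) and $k-\lambda > 0$, so that all radicals are real and $\frac{k \pm \sqrt{k^2-4\lambda}}{2} > 0$.

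Next I would diagonalize $M^2$. Writing $\mathbf 1$ for the all-ones vector and $e_0$ for the first standard basis vector, every $w \in \{e_0,\mathbf 1\}^\perp$ (first coordinate $0$, coordinates summing to $0$) satisfies $M^2 w = (k-\lambda)w$, so $k-\lambda$ is an eigenvalue of multiplicity at least $\ell-2$; a short computation of $M^2$ on the invariant plane $\Span\{e_0,\mathbf 1\}$ produces the remaining eigenvalues $\mu_\pm = \frac{k^2-2\lambda \pm k\sqrt{k^2-4\lambda}}{2}$, and one checks $\mu_\pm \neq k-\lambda$ (this fails only when $\ell = 1$), so the $(k-\lambda)$-eigenspace is \emph{exactly} $\{e_0,\mathbf 1\}^\perp$. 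Since $M$ is symmetric and commutes with $M^2$, it preserves each $M^2$-eigenspace, in particular the plane $\Span\{e_0,\mathbf 1\}$ and its orthogonal complement.

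The key step is to pin $M$ down on the plane $\Span\{e_0,\mathbf 1\}$ exactly. Computing the row sums of $M$ from the cyclotomic-number interpretation (the $i$-th row sums to $k$, except the $0$-th which sums to $k-1$, because $1 + g^{i+q'}K$ meets $0$ precisely when $i \equiv 0 \pmod \ell$) yields $M\mathbf 1 = k\mathbf 1 - e_0$. Feeding this together with $M^2|_{\Span\{e_0,\mathbf 1\}}$ into $M^2 = A^TA$ forces the matrix of $M$ in the basis $\{e_0,\mathbf 1\}$ to be $\left(\begin{smallmatrix} 0 & -1 \\ \lambda & k\end{smallmatrix}\right)$; in particular $Me_0 = \lambda\mathbf 1$ (so the entire first column of $M$, hence $M_{0,0}$, equals $\lambda$) and the two eigenvalues on the plane are the roots $\frac{k \pm \sqrt{k^2-4\lambda}}{2}$ of $x^2 - kx + \lambda$, now with the correct (positive) signs. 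It then remains to split the multiplicity $\ell-2$ of $\pm\sqrt{k-\lambda}$ on $\{e_0,\mathbf 1\}^\perp$. I would do this via the trace: $(i+q',i)$ equals the number of ordered pairs $(\kappa,\kappa') \in K^2$ with $\kappa+\kappa' = g^{-i}$, a quantity depending only on the coset $g^{-i}K$; since $-1 \notin K$ (here $k$ is odd) all $k^2$ pairs from $K^2$ have nonzero sum, and grouping by cosets gives $k\,\tr(M) = k^2$, i.e.\ $\tr(M) = k$. As the plane already contributes $k$ to the trace, the $\pm\sqrt{k-\lambda}$ eigenvalues must sum to zero, so each occurs $\frac\ell2 - 1$ times, with eigenvectors in $\{e_0,\mathbf 1\}^\perp$, i.e.\ of the form $(0,w)$ with $\sum w = 0$. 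This gives $\Spec(M)$; as $M$ is symmetric and, for $\ell \ge 4$, has four pairwise distinct eigenvalues, its minimal polynomial is $(x^2 - kx + \lambda)(x^2 - (k-\lambda))$.

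The matrix $S$ is handled in the same spirit and is quicker, being $M$ with its $0$-th row and column deleted. From $S^2 = B^TB = (k-\lambda)(\lambda J_{\ell-1} + I_{\ell-1})$ and $\lambda\ell = k-1$ one reads off the eigenvalues $(k-\lambda)^2$ (on $\mathbf 1$, simple) and $k-\lambda$ (on $\mathbf 1^\perp$, multiplicity $\ell-2$) of $S^2$. Deleting from each surviving row of $M$ its $0$-th entry (which equals $\lambda$, by the previous paragraph) shows $S\mathbf 1 = (k-\lambda)\mathbf 1$, fixing the sign of the large eigenvalue as $+(k-\lambda)$; and $\tr(S) = \tr(M) - M_{0,0} = k - \lambda$ again forces the $\pm\sqrt{k-\lambda}$ eigenvalues on $\mathbf 1^\perp$ to split evenly into $\frac\ell2 - 1$ each, with eigenvectors $w$ summing to zero. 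Assembling these yields $\Spec(S)$ and, for $\ell \ge 4$, the minimal polynomial $(x - (k-\lambda))(x^2 - (k-\lambda))$; the case $\ell = 2$ is degenerate (the $\pm\sqrt{k-\lambda}$ parts are empty, and the factor $x^2 - (k-\lambda)$ drops out). The main obstacle throughout is this sign determination: once Theorem~\ref{thmA:1} is in hand the linear algebra is routine, but the two inputs that break the sign ambiguity—the row-sum identity $M\mathbf 1 = k\mathbf 1 - e_0$ and the trace identity $\tr(M) = k$—require the cyclotomic-number interpretation, with $-1 \notin K$ (equivalently, $k$ odd) playing the decisive role in the latter.
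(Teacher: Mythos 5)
Your proposal is correct; every step checks out (the identity $Me_0=\lambda\mb{1}$ does follow from $M\mb{1}=k\mb{1}-e_0$ and the formula for $M^2$, your pair-counting argument for $k\,\tr(M)=k^2$ is valid since $-1\notin K$ when $k$ is odd, and the eigenvalue-distinctness checks reduce, as you note, to the excluded case $\ell=1$). But your route differs from the paper's at the decisive step. The paper works bottom-up: it first determines $\Spec(S)$ from $S^2=B^TB$, row sums, and a trace computation done via the cyclotomic identities of Lemma~\ref{lem: basic properties for (i,j)}; it then lifts the $\pm\sqrt{k-\lambda}$ eigenvectors to $M$ as $(0,w)$, and completes $\Spec(M)$ by proving the \emph{minimal polynomial first}, through the matrix identity $\left(M^2-kM+\lambda I_\ell\right)\left(M^2-(k-\lambda)I_\ell\right)=0$ obtained from relations among $J_\ell$, $E_{0,0}$ and $M$ — the spectrum is then read off from the annihilating polynomial. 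You reverse the logical direction: you analyze $M$ directly by exhibiting the $M$-invariant plane $\Span\{e_0,\mb{1}\}$, on which $M$ acts as $\left(\begin{smallmatrix}0 & -1\\ \lambda & k\end{smallmatrix}\right)$ with characteristic polynomial $x^2-kx+\lambda$, split the $\pm\sqrt{k-\lambda}$ multiplicities by the trace identity $\tr(M)=k$ (proved by your own counting argument rather than the paper's identity manipulations), and only afterwards deduce the minimal polynomial from the spectrum via diagonalizability of the symmetric matrix $M$. Your approach buys explicitness — the two ``exceptional'' eigenvalues appear with their correct signs from a concrete $2\times 2$ block, with no need for the somewhat opaque polynomial identity — and it also makes you notice that for $\ell=2$ the stated quartic strictly overshoots the true minimal polynomial, a degeneracy the paper's formulation glosses over. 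The paper's approach buys economy: the annihilating-polynomial computation needs no case analysis on distinctness of eigenvalues, and the passage from $S$ to $M$ recycles Lemma~\ref{lem: spectrum of S} wholesale.
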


The preceding result can be used to compute the determinants of the cyclotomic matrix $A$ and the submatrix $B$ introduced in Theorem~\ref{thmA:1}.

\begin{cor*}
If $K$ is a $(q, k, \lambda)$-difference set, then $$\det(A) = - \lambda (k - \lambda)^{\frac{\ell}{2}-1} \quad \text{and} \quad \det(B) = (-1)^{\frac{\ell}{2}-1} (k - \lambda)^{\frac{\ell}{2}}.$$
\end{cor*}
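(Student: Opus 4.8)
The plan is to read off $\det(M)$ and $\det(S)$ from the spectra in Theorem~\ref{thmA:3}, and then transfer these to $\det(A)$ and $\det(B)$ using that $M$ and $S$ are obtained from $A$ and $B$ by permuting rows. Since the determinant is the product of the eigenvalues counted with multiplicity, Theorem~\ref{thmA:3} gives both at once. For $M$, the two scalar eigenvalues multiply to $\frac{k+\sqrt{k^2-4\lambda}}{2}\cdot\frac{k-\sqrt{k^2-4\lambda}}{2} = \lambda$, while the eigenvalues $\pm\sqrt{k-\lambda}$, each of multiplicity $\frac{\ell}{2}-1$, contribute $\bigl(-(k-\lambda)\bigr)^{\frac{\ell}{2}-1}$, so that $\det(M) = \lambda(-1)^{\frac{\ell}{2}-1}(k-\lambda)^{\frac{\ell}{2}-1}$. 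The same count gives $\det(S) = (k-\lambda)\cdot(-1)^{\frac{\ell}{2}-1}(k-\lambda)^{\frac{\ell}{2}-1} = (-1)^{\frac{\ell}{2}-1}(k-\lambda)^{\frac{\ell}{2}}$.

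The remaining task is to compare these with $\det(A)$ and $\det(B)$. Because $k$ is odd and $\ell$ is even in the difference-set case, one has $q' \equiv \frac{\ell}{2}\pmod{\ell}$, so the $i$-th row of $M = [(i+q',j)]$ is precisely row $i+\frac{\ell}{2}$ of $A$, read modulo $\ell$. Hence $M$ arises from $A$ by the row permutation $\tau : i \mapsto i+\frac{\ell}{2} \pmod{\ell}$ (that is, $P$ is the permutation matrix of $\tau$), and $\det(M) = \mathrm{sgn}(\tau)\,\det(A)$. As the shift by $\frac{\ell}{2}$ on $\zz/\ell$, the permutation $\tau$ splits into $\gcd(\ell,\frac{\ell}{2}) = \frac{\ell}{2}$ transpositions, so $\mathrm{sgn}(\tau) = (-1)^{\ell/2}$. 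Dividing yields $\det(A) = (-1)^{\ell/2}\det(M) = -\lambda(k-\lambda)^{\frac{\ell}{2}-1}$, as claimed.

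For $B$ the scheme is identical, and the one genuinely delicate point is the sign of the corresponding permutation $P'$. Deleting column $0$ from both $M$ and $B$ leaves the columns aligned, and since $\tau$ sends $A$-row $\frac{\ell}{2}$ to $M$-row $0$, the rows deleted to form $S$ and $B$ match up: both $S$ and $B$ are indexed by the $A$-rows $\{0,\dots,\ell-1\}\setminus\{\frac{\ell}{2}\}$. I would then check that the reordering $\pi$ induced on these $\ell-1$ rows is again the shift by $\frac{\ell}{2}$, now on $\zz/(\ell-1)$; since $\gcd(\ell-1,\frac{\ell}{2}) = 1$, this $\pi$ is a single $(\ell-1)$-cycle and $\mathrm{sgn}(\pi) = (-1)^{\ell-2} = 1$. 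Therefore $\det(B) = \mathrm{sgn}(\pi)\,\det(S) = (-1)^{\frac{\ell}{2}-1}(k-\lambda)^{\frac{\ell}{2}}$. The main obstacle is exactly verifying that after the two deletions the relabeling collapses to a cyclic shift on $\ell-1$ symbols---rather than settling for $\det(P') = \pm1$---so that the sign is pinned down; everything else follows mechanically from the spectra.
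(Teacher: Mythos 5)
Your proposal is correct and follows essentially the same route as the paper: read $\det(M)$ and $\det(S)$ off the spectra of Theorem~\ref{thmA:3}, then transfer to $\det(A)$ and $\det(B)$ via the signs of the row permutations. The only cosmetic difference is how those signs are computed---you use the cycle decomposition (the shift by $\tfrac{\ell}{2}$ as $\tfrac{\ell}{2}$ transpositions on $\zz/\ell$, and a single $(\ell-1)$-cycle on $\zz/(\ell-1)$), while the paper counts block swaps to get $(-1)^{q'^2}$ and $(-1)^{q'(q'-1)}$; both give the same signs.
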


The structure of this article is organized as follows. 

Section~\ref{sec2} introduces the notation and basic properties of cyclotomic numbers. In Section~\ref{sec3}, we present the connection between cyclotomic numbers and Schur rings, and prove Lemma~\ref{lemA:1}, which is labeled as Lemma~\ref{lem: product of cyclotomic matrices}. Section~\ref{sec: Inner product of columns in cyclotomic matrices} investigates inner products of the columns of the cyclotomic matrix, where we prove Theorem~\ref{thm: product of cyclotomic numbers}, a generalization of \cite[Theorem~6.1 and 6.2]{Sna}. 

In Section~\ref{sec: Power Difference Sets}, we begin with a brief introduction to the development of power difference sets. Subsection~\ref{subsec: Some necessary conditions and equivalent statements} presents several necessary conditions that must be satisfied if $K$ is a difference set, as well as part~(2) of Theorem~\ref{thmA:1}, which provides an equivalent reformulation of this property. Two additional necessary conditions are discussed in Subsection~\ref{subsec: Two additional properties}. In Subsection~\ref{subsec: Cyclotomic matrix to a power difference set}, we establish part~(3) of Theorem~\ref{thmA:1} and derive the expressions of $A^T A$ and $B^T B$, and includes Theorem~\ref{thmA:2} (labeled as Theorem~\ref{thm: if ell(k-lambda) is a square}. Subsection~\ref{subsec: Shifted cyclotomic matrices} provides the proof of Theorem~\ref{thmA:3} and its corollary. The expression of $A^T A$ for a {\it modified power difference set} is given in Subsection~\ref{subsec: Modified power difference set}. 

Finally, Section~\ref{sec: Concluding Remark} concludes with two problems related to power difference sets.


\section{Preliminaries}
\label{sec2}

Throughout this article, let $\ff_q$ denote a finite field with $q$ elements, where $q$ is a power of an {\it odd} prime $p$. For a divisor $\ell$ of $q-1$, denote by $K$ the multiplicative subgroup of $\ff_q^* = \ff_q \setminus \{0\}$ consisting of all $\ell$-th power. Then $\ell = \left[ \ff_q^* : K \right]$ and $q-1 = \ell k$, where $k = |K|$. For $0 \leq i, j \leq \ell-1$, the {\it cyclotomic numbers} with respect to a fixed generator $g$ of $\ff_q^*$ are defined as $$(i, j) = \left| (1 + g^i K) \cap g^j K \right|.$$ The indices $i$ and $j$ can be extended to arbitrary integers, and the values of the cyclotomic numbers are periodic modulo $\ell$; that is, $(i, j) = (i_1, j_1)$ whenever $i \equiv i_1 \pmod{\ell}$ and $j \equiv j_1 \pmod{\ell}$

We denote by $q'$ the unique integer between $0$ and $\ell-1$ such that $q' \equiv \frac{q-1}{2} \pmod{\ell}$. The assumption $q' = \frac{q-1}{2}$ in the introduction was made for convenience, but this choice does not affect the results. Since $q' \equiv -q' \pmod{\ell}$, the sign of $q'$ will often be omitted when it causes no ambiguity. Note that $-K = g^{\frac{q-1}{2}} K = g^{q'} K$. Moreover, $-K = K$ if and only if $k$ is even. Therefore, the value of $q'$ must be one of the following: \begin{align} q' = \left\{ \begin{array}{cl} 0 & \text{if $k$ is even,} \\ \frac{\ell}{2} & \text{if $k$ is odd.} \end{array} \right. \label{eq: value of q'} \end{align} 

The following elementary observation will be used frequently throughout this article.

\begin{lem}
\label{lem: basic properties for (i,j)}
Let $i, j \in \zz$. Then the following properties of cyclotomic numbers hold.
\begin{enumerate}
\item[(i)] 
$(i, j) = \left(j + q', i + q' \right)$.
\item[(ii)]
$(i, j) = (-i, j-i)$.
\item[(iii)]
$\sum_{t=0}^{\ell-1} (i, t) = \left\{ \begin{array}{cl} k-1 & \text{if } i \equiv q' \pmod{\ell}, \\ k & \text{otherwise.} \end{array} \right.$
\item[(iv)]
$\sum_{t=0}^{\ell-1} (t, j) = \left\{ \begin{array}{cl} k-1 & \text{if } j \equiv 0 \pmod{\ell}, \\ k & \text{otherwise.} \end{array} \right.$
\end{enumerate}
\end{lem}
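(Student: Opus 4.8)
The plan is to reformulate each cyclotomic number as the cardinality of an explicit subset of $\ff_q^*$ and then, for each identity, to exhibit either a bijection between the relevant subsets or a direct partition count. The starting point is the rewriting
$$(i,j) = \left| \{\, y \in g^i K : 1 + y \in g^j K \,\} \right|,$$
obtained by expressing a typical element of $(1 + g^i K) \cap g^j K$ as $1 + y$ with $y \in g^i K$. Throughout I would use the elementary observation that $-1 = g^{(q-1)/2}$, so that $-g^s K = g^{s+q'} K$ for every $s$; in particular $-1 \in g^s K$ exactly when $s \equiv q' \pmod{\ell}$, while $1 \in g^s K$ exactly when $s \equiv 0 \pmod{\ell}$.

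For (ii) I would apply the substitution $y \mapsto y^{-1}$. If $y \in g^i K$ and $1 + y \in g^j K$, then $y^{-1} \in g^{-i} K$ and $1 + y^{-1} = (1+y)\,y^{-1} \in g^{j-i} K$. Since inversion is a bijection of $\ff_q^*$ preserving both defining conditions, this matches the set counted by $(i,j)$ with the one counted by $(-i, j-i)$; well-definedness needs only $y \neq 0$ (automatic) and $1+y \neq 0$ (forced by $1 + y \in g^j K$). For (i) I would instead use the involution $y \mapsto w = -(1+y)$. If $y \in g^i K$ and $1 + y \in g^j K$, then $w \in -g^j K = g^{j+q'} K$ and $1 + w = -y \in -g^i K = g^{i+q'} K$, so $w$ lies in the set counted by $(j+q', i+q')$; applying the map twice returns $y$, so it is a bijection and $(i,j) = (j+q', i+q')$ follows.

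For (iii) and (iv) I would replace the bijection by a partition argument. As $t$ ranges over $0, \ldots, \ell-1$ the cosets $g^t K$ partition $\ff_q^*$, so summing $(i,t)$ over $t$ counts all $y \in g^i K$ with $1 + y \in \ff_q^*$, that is all $y \in g^i K$ with $y \neq -1$; this equals $k$ unless $-1 \in g^i K$, which by the opening observation occurs precisely when $i \equiv q' \pmod{\ell}$, giving (iii). Symmetrically, summing $(t,j)$ over $t$ counts all $y \in \ff_q^*$ with $1 + y \in g^j K$; writing $x = 1 + y \in g^j K$ this is the number of $x \in g^j K$ with $x \neq 1$, hence $k$ unless $1 \in g^j K$, i.e. unless $j \equiv 0 \pmod{\ell}$, giving (iv).

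The arguments are short and there is no serious obstacle; the only points demanding care are verifying that the substitutions in (i) and (ii) are genuine bijections (which reduces to the nonvanishing of $y$ and $1+y$) and correctly locating $-1$ and $1$ among the cosets, i.e. keeping careful track of the role of $q'$.
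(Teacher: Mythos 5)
Your proposal is correct and takes essentially the same route as the paper: your substitution $y \mapsto y^{-1}$ for (ii) and your involution $y \mapsto -(1+y)$ for (i) are exactly the paper's bijections (the latter, viewed on the intersection elements $z = 1+y$, is the paper's map $z \mapsto 1-z$), and the partition-of-$\ff_q^*$ count for (iii) is identical. The only minor deviation is that you establish (iv) by a second direct partition count (locating $1$ among the cosets) rather than deducing it from (i) and (iii) as the paper does; both are immediate and equally valid.
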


\begin{proof}
We begin by noting that $-1 K = g^{q'} K$. To prove (i), consider the map $1 + g^i x \mapsto 1 - g^j y$ defined for $x, y \in K$ with $1 + g^i x = g^j y$. This gives a bijection between $(1+g^i K) \cap g^j K$ and $(1-g^j K) \cap (-g^i K)$, and the identity follows. To prove (ii), consider the map $1 + g^i x \mapsto (1 + g^i x) g^{-i} x^{-1} = 1 + g^{-i} x^{-1}$ which defines a bijection between $(1+g^i K) \cap g^j K$ and $(1+g^{-i} K) \cap g^{-i+j} K$. 

For (iii), observe that $$ \sum_{t=0}^{\ell-1} (i, t) =  \sum_{t=0}^{\ell-1} \left| (1 + g^i K) \cap g^t K \right| = \left| (1 + g^i K) \cap \ff_q^* \right|,$$ since $\ff_q^*$ decomposes as the disjoint union of the $g^t K$. The only missing element from $\ff_q^{*}$ is $0$, which lies in $1 + g^i K$ if and only if $g^i K = - K = g^{q'} K$, as desired. Finally, (iv) follows from (i) and (iii) because $$\sum_{t=0}^{\ell-1} (t, j) \overset{\text{(i)}}{=} \sum_{t=0}^{\ell-1} \left(j+q', t + q' \right) = \sum_{t=0}^{\ell-1} \left(j + q', t \right).$$ 
\end{proof}


The properties in (i) and (ii) will be used frequently throughout this article. When either identity is invoked, we will annotate the corresponding equality by placing $*$ and $**$ above it, respectively. For example, the following sequence of equalities illustrates their repeated application: \begin{align} \begin{split}(i, j) \overset{*}{=} (j+q', i+q') & \overset{**}{=} (-j + q', i-j) \\ & \overset{*}{=} (i-j + q', -j) \overset{**}{=} (j-i+q', -i+q') \overset{*}{=} (-i, j-i), \end{split} \label{eq: equalities of cyclotomic numbers} \end{align} and finally, $(-i, j-i) \overset{**}{=} (i, j)$.

The corollary below follows directly from the classification of $q'$ in~\eqref{eq: value of q'}.

\begin{cor}
\label{cor: basic properties for (i,j)}
If $k$ is even, then $(i, j) = (j, i)$ and $$\sum_{t=0}^{\ell-1} (i, t) = \left\{ \begin{array}{cl} k-1 & \text{if } i \equiv 0 \pmod{\ell}, \\ k & \text{otherwise.} \end{array} \right.$$ If $k$ is odd, then $(i, j) = \left(j + \frac{\ell}{2}, i + \frac{\ell}{2} \right)$ and $$\sum_{t=0}^{\ell-1} (i, t) = \left\{ \begin{array}{cl} k-1 & \text{if } i \equiv \frac{\ell}{2} \pmod{\ell}, \\ k & \text{otherwise.} \end{array} \right.$$
\end{cor}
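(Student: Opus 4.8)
The plan is to obtain the corollary by direct substitution of the explicit value of $q'$ given in~\eqref{eq: value of q'} into parts~(i) and~(iii) of Lemma~\ref{lem: basic properties for (i,j)}. Since both the symmetry-type identity and the row-sum formula in that lemma are stated uniformly in terms of $q'$, splitting into the two parity cases for $k$ and replacing $q'$ by its known residue should immediately yield the four displayed assertions.

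First I would treat the case where $k$ is even. By~\eqref{eq: value of q'} we have $q' = 0$, so part~(i) of Lemma~\ref{lem: basic properties for (i,j)} reads $(i,j) = (j+0, i+0) = (j,i)$, giving the claimed symmetry. Likewise, part~(iii) becomes $\sum_{t=0}^{\ell-1}(i,t) = k-1$ precisely when $i \equiv 0 \pmod{\ell}$ and equals $k$ otherwise, which is exactly the stated row-sum formula.

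Next I would treat the case where $k$ is odd, for which~\eqref{eq: value of q'} gives $q' = \frac{\ell}{2}$ (note that $\ell$ is even here, since $q-1 = \ell k$ is even while $k$ is odd, so $\frac{\ell}{2}$ is a genuine integer). Substituting into part~(i) yields $(i,j) = (j + \frac{\ell}{2}, i + \frac{\ell}{2})$, and substituting into part~(iii) replaces the congruence condition $i \equiv q'$ by $i \equiv \frac{\ell}{2} \pmod{\ell}$, producing the remaining two assertions.

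I do not expect any genuine obstacle: the entire content of the corollary is the specialization of Lemma~\ref{lem: basic properties for (i,j)} to the two admissible values of $q'$. The only point that would otherwise require justification---namely that $q'$ can take no value other than $0$ or $\frac{\ell}{2}$, dictated by the parity of $k$---has already been settled in~\eqref{eq: value of q'}, so nothing further is needed beyond the substitution itself.
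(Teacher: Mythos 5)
Your proposal is correct and is exactly the paper's intended argument: the paper states that the corollary ``follows directly from the classification of $q'$ in~\eqref{eq: value of q'},'' i.e., substituting $q'=0$ (for even $k$) and $q'=\frac{\ell}{2}$ (for odd $k$) into parts~(i) and~(iii) of Lemma~\ref{lem: basic properties for (i,j)}. Your additional remark that $\ell$ is even when $k$ is odd is a harmless bonus justification already implicit in~\eqref{eq: value of q'}.
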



We define the {\it cyclotomic matrix} as $$A = \left[ (i, j) \right]_{0 \leq i, j \leq \ell-1} = \left[ \begin{matrix} (0,0) & (0, 1) & \cdots & (0, \ell-1) \\ (1, 0) & (1,1) & \cdots & (1, \ell-1) \\ \vdots & \vdots & \ddots & \vdots \\ (\ell-1, 0) & (\ell-1, 1) & \cdots & (\ell-1, \ell-1) \end{matrix} \right].$$ This matrix has appeared in previous work as a convenient way to arrange the cyclotomic numbers in matrix form. 

\section{A connection to Schur rings}
\label{sec3}

Let $G$ be a finite group and let $\zz[G]$ denote its integral group ring. For any subset $P$ of $G$, we denote $P^{-1} = \{x^{-1} \mid x \in P\}$ and let $\overline{P} = \sum_{x \in P} x$ be the form sum over elements of $P$. Let $\mathcal{P} = \{P_1, P_2, \ldots, P_t\}$ be a partition of $G$, that is, $G$ is the disjoint union of the $P_i$. Consider the $\zz$-submodule $R \subseteq \zz[G]$ generated by $\overline{P_1}, \overline{P_2}, \ldots, \overline{P_t}$. The submodule $R$ is called a {\it Schur ring} (or {\it S-ring}) with respect to the partition $\mathcal{P}$ if $P_i^{-1} \in \mathcal{P}$ for every $1 \leq i \leq t$, and $R$ is closed under multiplication, i.e., a subring of $\zz[G]$.

The notion of Schur rings was introduced by I. Schur \cite{Sch} in order to generalize W. Burnside's work on permutation groups, though using terminology different from what is standard today. A systematic theory of such rings was later developed in H. Wielandt's book \cite{Wie2}. Schur rings have also found applications in the graph isomorphism problem---see the survey article \cite{MuzPon} for details. More recently, studies such as \cite{Mis} have investigated the number of partitions of a given cyclic $p$-group that generate Schur rings.

We now present a connection between cyclotomic numbers and Schur rings. Let $q = p^n$, and let $\beta = \{\beta_1, \beta_2, \ldots, \beta_n\}$ be a basis for $\ff_q$ over $\ff_p$. Consider the multiplicative elementary abelian group $G = \langle t_1 \rangle \times \langle t_2 \rangle \times \cdots \times \langle t_n \rangle$, and consider the group isomorphism $\varphi : (\ff_q, +) \to G$ given by $$x_1 \beta_1 + x_2 \beta_2 + \cdots + x_n \beta_n \quad \mapsto \quad t_1^{x_1} t_2^{x_2} \cdots t_n^{x_n}$$ for $x_1, x_2, \ldots, x_n \in \ff_p$. Then $$\mathcal{P} = \{\{1_G\}, \varphi(g^0 K), \varphi(g^1 K), \ldots, \varphi(g^{\ell-1} K)\}$$ forms a partition of $G$. Let $1$ denote the multiplicative identity in $\zz[G]$, and define $\alpha_i = \overline{\varphi(g^i K)}$ for $i \in \zz$. In particular, $\alpha_i = \alpha_{i'}$ whenever $i \equiv i' \pmod{\ell}$. Let $R$ be the $\zz$-submodule of $\zz[G]$ generated by $1, \alpha_0, \alpha_1, \ldots, \alpha_{\ell-1}$.

\begin{lem}
\label{lem: cyclotomic numbers and Schur ring}
The submodule $R$ is a Schur ring with respect to $\mathcal{P}$. Moreover for any integers $i, v$, the following identity holds: \begin{align} \alpha_i \alpha_v = \delta_{i, v+q'} + \sum_{j=0}^{\ell-1} (i-v, j-v) \alpha_j \label{eq: cyclotomic numbers and Schur ring} \end{align} where \begin{align*} \delta_{i, j} = \left\{ \begin{array}{cl} k & \text{if } i \equiv j \pmod{\ell}, \\ 0 & \text{otherwise.} \end{array} \right. \end{align*}
\end{lem}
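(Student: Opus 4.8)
The plan is to prove the multiplicative identity \eqref{eq: cyclotomic numbers and Schur ring} directly and to observe that it \emph{simultaneously} establishes that $R$ is closed under multiplication. Indeed, since $1$ is the identity of $\zz[G]$ we have $1\cdot\alpha_i=\alpha_i$, so the only products of generators needing attention are the $\alpha_i\alpha_v$, and the right-hand side of \eqref{eq: cyclotomic numbers and Schur ring} is manifestly a $\zz$-combination of $1,\alpha_0,\dots,\alpha_{\ell-1}$. The other defining condition of a Schur ring---closure of $\mathcal{P}$ under inversion---is immediate: since $G$ is elementary abelian with $p$ odd, the homomorphism $\varphi$ satisfies $\varphi(x)^{-1}=\varphi(-x)$, so $\varphi(g^iK)^{-1}=\varphi(-g^iK)=\varphi(g^{i+q'}K)\in\mathcal{P}$ (using $-K=g^{q'}K$), while $\{1_G\}^{-1}=\{1_G\}$. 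Thus the heart of the matter is the identity itself.

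To obtain it, I would expand the product in $\zz[G]$ and use that $\varphi$ carries addition to multiplication:
\[
\alpha_i\alpha_v=\sum_{x\in g^iK}\sum_{y\in g^vK}\varphi(x)\varphi(y)=\sum_{x\in g^iK}\sum_{y\in g^vK}\varphi(x+y).
\]
Collecting terms by the value $z=x+y\in\ff_q$, the coefficient of $\varphi(z)$ is $N(z):=\left|\{(x,y):x\in g^iK,\ y\in g^vK,\ x+y=z\}\right|$. Scaling $(x,y)\mapsto(cx,cy)$ by $c\in K$ is a bijection between the solution sets for $z$ and for $cz$ (as $cg^iK=g^iK$ and $cg^vK=g^vK$), so $N$ is constant on each coset $g^mK$; writing $n_m$ for this common value and treating $N(0)$ separately, the relations $\sum_{z\in g^mK}\varphi(z)=\alpha_m$ and $\varphi(0)=1$ give $\alpha_i\alpha_v=N(0)\cdot 1+\sum_{m=0}^{\ell-1}n_m\alpha_m$.

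It then remains to identify the coefficients. For $z=0$ the condition $x=-y$ forces $x\in g^iK\cap(-g^vK)=g^iK\cap g^{v+q'}K$, which is empty unless $i\equiv v+q'\pmod{\ell}$ and otherwise has $k$ elements; hence $N(0)=\delta_{i,v+q'}$. For $z\in g^mK$, taking $z=g^m$ and sending a solution $(x,y)$ to $s:=x/y$, one gets $s\in g^{i-v}K$ and $1+s=(x+y)/y=g^m/y\in g^{m-v}K$; conversely $s$ recovers $(x,y)$ via $y=g^m/(1+s)$ and $x=sy$, and the required coset memberships follow from $1+s\in g^{m-v}K$. This is a bijection onto $\{s\in g^{i-v}K:1+s\in g^{m-v}K\}$, whose size is $\left|(1+g^{i-v}K)\cap g^{m-v}K\right|=(i-v,m-v)$, so $n_m=(i-v,m-v)$; relabeling $m$ as $j$ yields \eqref{eq: cyclotomic numbers and Schur ring}. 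I expect the one genuinely delicate step---and the crux of the argument---to be verifying that $s=x/y$ is an honest bijection onto the cyclotomic-count set, i.e. that its inverse indeed lands in the correct cosets $g^iK$ and $g^vK$; the remaining manipulations are routine bookkeeping.
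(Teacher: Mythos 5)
Your proposal is correct and follows essentially the same route as the paper: both expand $\alpha_i\alpha_v$ in $\zz[G]$ using that $\varphi$ turns addition into multiplication, identify the coefficient of $\varphi(0)$ as $\delta_{i,v+q'}$ via $-K=g^{q'}K$, and reduce the coefficient on each coset $g^jK$ to the cyclotomic number $(i-v,j-v)$ by dividing through by $y$ (your substitution $s=x/y$, with the scaling argument making coset-invariance explicit, is just a cosmetic variant of the paper's $w=zy^{-1}$). No gaps.
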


\begin{proof}
For each $i = 0, 1, \ldots, \ell-1$, we have $(\varphi(g^i K))^{-1} = \varphi(-g^{i} K) = \varphi(g^{i + q'} K) \in \mathcal{P}$, so each inverse set remains within the partition, verifying the first condition of a Schur ring. To show that $R$ is closed under multiplication, it suffices to verify the identity~\eqref{eq: cyclotomic numbers and Schur ring} above. Fix integers $i$ and $v$. Since $\alpha_i \alpha_v \in \zz[G]$ and $\varphi: \ff_q \to G$ is an isomorphism, we may write $\alpha_i = \sum_{x \in g^i K} \varphi(x)$, $\alpha_v = \sum_{y \in g^v K} \varphi(y)$ and $\alpha_i \alpha_v = \sum_{z \in \ff_q} a_z \varphi(z)$ for some $a_z \in \zz$. For each $z$, the coefficient $a_z$ counts the number of ordered pairs $(x, y) \in g^i K \times g^v K$ such that $\varphi(x) \varphi(y) = \varphi(z)$, which is equivalent to $x + y = z$. Therefore, \begin{align*} a_z = \left| \{y \in g^v K \mid z - y \in g^i K \} \right| = \left| \{y \in g^v K \mid z y^{-1} - 1 \in g^{i-v} K \} \right|. \end{align*} If $z = 0$, the number of such $y$ is $k$ when $i \equiv v + q' \pmod{\ell}$, due to $-K = g^{q'} K$. Otherwise, the number is $\left| (1+g^{i-v} K) \cap (z g^{-v} K) \right|$. Now, for any fixed $j =0, 1, \ldots, \ell-1$, we have \begin{align*} \sum_{z \in g^j K} a_z \varphi(z) & = \sum_{z \in g^j K} \left| (1+g^{i-v} K) \cap (z g^{-v} K) \right| \varphi(z) \\ & = \sum_{z \in g^j K} (i-v, j-v) \varphi(z) \\ & = (i - v, j -v) \alpha_j. \end{align*} This completes the proof. 
\end{proof}


The proof of Lemma~\ref{lem: cyclotomic numbers and Schur ring} does not rely on the properties of cyclotomic numbers given in Lemma~\ref{lem: basic properties for (i,j)}. In fact, those properties can also be derived from the algebraic structure of the Schur ring $R$. For example, since $R$ is a commutative ring, the commutativity of $\alpha_i$ and $\alpha_0$ immediately yields property~(ii): $(i, j) = (-i, j-i)$. Furthermore, consider the scalar of $1$ in the product $(\alpha_i \alpha_0) \alpha_{j+q'}$, which equals $$\sum_{s=0}^{\ell-1} (i, s) k \delta_{s, j} = (i, j) k.$$ On the other hand, the scalar of $1$ in the expression $\alpha_i \left( \alpha_0 \alpha_{j+q'} \right)$ is $$\sum_{s=0}^{\ell-1} k \delta_{i, s+q'} \left( -j-q', s-j-q' \right) = k \left( -j-q', i - j \right).$$ By associativity of multiplication in $R$, we obtain $(i, j) = \left( -j-q', i - j \right)$, which, together with the commutativity of $\alpha_0$ and $\alpha_{j+q'}$, implies property~(i): $(i, j) = \left(j + q', i + q' \right)$. Finally, property~(iii) can be derived directly from the equation~\eqref{eq: cyclotomic numbers and Schur ring} by applying the {\it augmentation map}, which sends $\sum_{t \in G} a_t t \mapsto \sum_{t \in G} a_t$, $a_t \in \zz$, from $\zz[G]$ to $\zz$.

Motivated by the preceding arguments, it is natural to expect that further properties of cyclotomic numbers may be revealed through the algebraic structure of $R$. Let $[\alpha]$ denote the right regular representation of an element $\alpha$ of $R$ under the ordered basis $\{1, \alpha_0, \ldots, \alpha_{\ell-1}\}$. Equation~\eqref{eq: cyclotomic numbers and Schur ring} gives the explicit structure of the matrix $[\alpha_v]$, namely:
\begin{align*} [\alpha_v] = \left[ \begin{array}{c|c} 
0 & \mb{r}_{v} \\ \hline 
k \mb{c}_{v+q'} & A_v
\end{array} \right],
\end{align*}
where $A_v = [(i - v, j - v)]_{0 \leq i, j \leq \ell-1}$, $\mb{c}_{i}$ is the $\ell \times 1$ column vector with a $1$ in the $i$-th position and zeros elsewhere, $\mb{r}_j$ is the $1 \times \ell$ row vector with a $1$ in the $j$-th position and zeros elsewhere. All indices are considered modulo $\ell$, and range from $0$ to $\ell-1$. When $v=0$, the matrix $A_0$ is precisely the cyclotomic matrix $A$ defined earlier. In this case, we have 
\begin{align*} [\alpha_0] = 
\left[ \begin{array}{c|c} 
0 & \mb{r}_{0} \\ \hline 
k \mb{c}_{q'} & A
\end{array} \right]
\end{align*}
Furthermore, for $v = q'$, the matrix $[\alpha_{q'}]$ becomes
\begin{align*} 
[\alpha_{q'}] & = 
\left[ \begin{array}{c|c} 
0 & \mb{r}_{q'} \\ \hline 
k \mb{c}_{0} & A^T
\end{array} \right]
\end{align*}
where $A^T$ denotes the transpose of the cyclotomic matrix $A$, since the entries of $A_{q'}$ satisfy $(i - q', j - q') \overset{*}{=} (j, i)$.

Moreover, Equation~\eqref{eq: cyclotomic numbers and Schur ring} agives the following representation: \begin{align*} [\alpha_u \alpha_v] = [k \delta_{u, v+q'}] + \sum_{w=0}^{\ell-1} (u-v, w-v) [\alpha_w] \end{align*} which can be written as
\begin{align*}
\left[\begin{array}{c|c} 
k \delta_{u, v+q'} & \disp \sum_{w=0}^{\ell-1} (u-v, w-v) \mb{r}_{w} \\ \hline 
\disp k \sum_{w=0}^{\ell-1} (u-v, w-v) \mb{c}_{w+q'} & \disp k \delta_{u, v+q'} I_{\ell} + \sum_{w=0}^{\ell-1} (u-v, w-v) A_w 
\end{array} \right] 
\end{align*} 
where $I_{\ell}$ is the $\ell \times \ell$ identity matrix. On the other hand, we can compute the same product as 
\begin{align*} [\alpha_{u} \alpha_v] = [\alpha_{u}] \cdot [\alpha_v] = \left[ \begin{array}{c|c} 
k \mb{r}_{u} \mb{c}_{v+q'}  & \mb{r}_{u} A_v \\ \hline 
k A_{u} \mb{c}_{v+q'} & k E_{u+q', v} + A_{u} A_v 
\end{array} \right]\end{align*}
where $E_{i, j}$ denotes the matrix with a $1$ at the $(i, j)$-entry and zeros elsewhere. The $\ell \times \ell$ submatrix yields the following expression for $A_u A_v$, where our journey begins. 

\begin{lem}
\label{lem: product of cyclotomic matrices}
For $0 \leq u, v \leq \ell-1$, 
\begin{align} A_u A_v = k (\delta_{u, v+q'} I_{\ell} - E_{u+q', v}) + \sum_{w=0}^{\ell-1} (u-v, w-v) A_w. \label{eq: product of cyclotomic matrices} \end{align}
\end{lem}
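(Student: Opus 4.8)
The plan is to extract the identity directly from the homomorphism property of the right regular representation, following exactly the setup laid out in the discussion preceding the statement. By Lemma~\ref{lem: cyclotomic numbers and Schur ring}, $R$ is a commutative Schur ring, hence an associative ring, with $\zz$-basis $\{1, \alpha_0, \ldots, \alpha_{\ell-1}\}$; the linear independence of this basis is clear, since the sets $\varphi(g^i K)$ together with $\{1_G\}$ form the partition $\mathcal P$ of $G$, so the corresponding formal sums $1, \alpha_0, \ldots, \alpha_{\ell-1}$ have pairwise disjoint supports in $\zz[G]$. Consequently the assignment $\alpha \mapsto [\alpha]$ is a well-defined ring homomorphism into $(\ell+1)\times(\ell+1)$ integer matrices, so in particular $[\alpha_u \alpha_v] = [\alpha_u]\,[\alpha_v]$, which is the engine of the whole argument.

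First I would record the block shape of a single generator. Applying the product formula of Lemma~\ref{lem: cyclotomic numbers and Schur ring} to $1\cdot\alpha_v = \alpha_v$ and to $\alpha_i\cdot\alpha_v = \delta_{i,v+q'} + \sum_j (i-v,j-v)\alpha_j$ reads off the coefficients of $[\alpha_v]$ one basis row at a time, yielding
\begin{align*}
[\alpha_v] = \left[\begin{array}{c|c} 0 & \mb{r}_v \\ \hline k\,\mb{c}_{v+q'} & A_v \end{array}\right],
\end{align*}
where the lower-right $\ell\times\ell$ block is $A_v = [(i-v,j-v)]$ by definition. The only index to watch is that the coefficient of $1$ in $\alpha_i\alpha_v$ is nonzero precisely when $i\equiv v+q'\pmod\ell$, which places the scalar $k$ in the row labelled $v+q'$ of the first column.

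Next I would expand $[\alpha_u\alpha_v]$ in the two announced ways and compare only their lower-right $\ell\times\ell$ blocks. On one side, linearity of $[\,\cdot\,]$ applied to the product formula gives $[\alpha_u\alpha_v] = \delta_{u,v+q'}\,[1] + \sum_{w=0}^{\ell-1}(u-v,w-v)\,[\alpha_w]$; since $[1]$ is the identity matrix and $\delta_{u,v+q'}$ equals $k$ exactly when $u\equiv v+q'\pmod\ell$, the lower-right block here is $\delta_{u,v+q'}I_\ell + \sum_w (u-v,w-v)A_w$. On the other side, the matrix product $[\alpha_u]\,[\alpha_v]$ has lower-right block $k\,\mb{c}_{u+q'}\mb{r}_v + A_u A_v = k\,E_{u+q',v} + A_u A_v$, because $\mb{c}_{u+q'}\mb{r}_v$ is exactly the elementary matrix $E_{u+q',v}$. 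Equating the two blocks and solving for $A_u A_v$ produces the stated identity, with the factor $k$ displayed in front of the indicator term.

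There is no deep obstacle here: the statement is essentially the lower-right corner of the homomorphism identity, and the remaining work is bookkeeping. The one point demanding genuine care is the index arithmetic modulo $\ell$ in the off-diagonal blocks---one must confirm that the first column $k\,\mb{c}_{u+q'}$ of $[\alpha_u]$ multiplies against the first row $\mb{r}_v$ of $[\alpha_v]$ to contribute the single entry $k$ at position $(u+q',v)$, and that this is consistent with the placement of the $\delta_{u,v+q'}I_\ell$ term coming from the diagonal. A secondary remark worth stating is that, by periodicity of the cyclotomic numbers and of the indices modulo $\ell$, the identity in fact holds for all integers $u,v$, so restricting to $0\le u,v\le\ell-1$ loses no generality; I would also note that a direct entrywise verification, matching $\sum_s (i-u,s-u)(s-v,j-v)$ against the right-hand side, is possible but markedly messier, which is why the Schur-ring route is preferable.
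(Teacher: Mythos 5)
Your proof is correct and takes essentially the same route as the paper: the paper's own argument is precisely the discussion preceding the lemma, which computes the regular representation $[\alpha_u\alpha_v]$ in two ways---once by linearity from the product formula of Lemma~\ref{lem: cyclotomic numbers and Schur ring}, once as the matrix product $[\alpha_u][\alpha_v]$---and equates the lower-right $\ell\times\ell$ blocks, exactly as you do. Your supplementary points (linear independence of $\{1,\alpha_0,\ldots,\alpha_{\ell-1}\}$ from disjointness of supports, the identification $k\,\mb{c}_{u+q'}\mb{r}_v = k E_{u+q',v}$, and the extension to all integers $u,v$ by periodicity) only make explicit what the paper leaves implicit.
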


\begin{remk}
\label{remk: conjugates of A_0}
Let $P_v$ be the permutation matrix whose $(i, j)$-entry is $1$ if $i - j \equiv v \pmod{\ell}$, and $0$ otherwise. Then $P_v^{-1} = P_{-v} = P_v^T$. In particular, we have the relation $A_v = P_v A_0 P_{v}^{-1}$. In this sense, Lemma~\ref{lem: product of cyclotomic matrices} describes the products of certain orthogonal conjugates of the cyclotomic matrix $A_0$. The matrix $P_{q'}$ will also play a role in Subsection~\ref{subsec: Shifted cyclotomic matrices}.
\end{remk}

From the identity $(u-v, w-v) \overset{**}{=} (v-u, w-u)$ and the commutativity of $\alpha_u$ and $\alpha_v$, it follows that \begin{align*} A_u A_v - A_v A_u = k (E_{v+q', u} - E_{u+q', v}). 
\end{align*} Also, since $(i - v + q', j - v + q') \overset{*}{=} (j - v, i - v)$, we find $A_{v+q'} = A_v^T$. Consequently, \begin{align*} A_v^T A_v - A_v A_v^T = k (E_{v+q', v+q'} - E_{v, v}). 
\end{align*} In other words, every cyclotomic matrix $A_0$ is ``almost'' a {\it normal matrix}. 

\begin{exmp}
\label{exmp: normality for 7^3}
Consider $\ff_{7^3} = \ff_7(g)$ with $g$ a root of the {\it Conway polynomial} $x^3 + 6 x^2 + 4$ over $\ff_7$. In particular, $g$ is also a generator of $\ff_{7^3}^*$. Let $\ell=6$, and consider the cyclotomic matrix $A_0$ associated with $\ell$ and $g$. Then $$A_0 = \left[ \begin{matrix} 7 & 12 & 12 & 14 & 6 & 6 \\ 12 & 9 & 9 & 6 & 12 & 9 \\ 9 & 9 & 12 & 6 & 9 & 12 \\ 7 & 12 & 9 & 7 & 12 & 9 \\ 12 & 6 & 9 & 12 & 9 & 9 \\ 9 & 9 & 6 & 12 & 9 & 12 \end{matrix} \right] \quad \text{and} \quad A_0^T A_0 - A_0 A_0^T = \left[ \begin{matrix} -57 & 0 & 0 & 0 & 0 & 0 \\ 0 & 0 & 0 & 0 & 0 & 0 \\ 0 & 0 & 0 & 0 & 0 & 0 \\ 0 & 0 & 0 & 57 & 0 & 0 \\ 0 & 0 & 0 & 0 & 0 & 0 \\ 0 & 0 & 0 & 0 & 0 & 0 \end{matrix} \right].$$
\end{exmp}

Note that $\tr(A_w) = k-1$ for each $w$, and $\sum_{w=0}^{\ell-1} (u-v, w-v) = k-\delta_{u-v, q'}$. Moreover, since $\delta_{u, v+q'} = \delta_{u-v, q'}$ and $\tr(E_{u+q',v}) = \delta_{u+q', v} = \delta_{u-v, q'}$, it follows from~\eqref{eq: product of cyclotomic matrices} that \begin{align} \begin{split} \tr(A_u A_v) & = k (\ell-1) \delta_{u-v, q'} + (k- \delta_{u-v, q'}) (k-1) \\ & = (q - 2k) \delta_{u-v, q'} + k (k-1) \end{split}. \label{eq: trace of A_u A_v} \end{align} Depending on the possible values of $q'$ as described in~\eqref{eq: value of q'}, we obtain the following result for the trace of the square of a cyclotomic matrix, classified by the parity of $k$.

\begin{prop}
\label{prop: trace of A^2}
\begin{align*} \tr(A_0^2) = 
\left\{ \begin{array}{ll} k (k-1) + q - 2k & \text{if $k$ is even,} \\ k (k-1) & \text{if $k$ is odd.} \end{array} \right. 
\end{align*}
\end{prop}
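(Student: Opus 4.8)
The plan is to obtain the proposition as the single specialization $u = v = 0$ of the general trace identity \eqref{eq: trace of A_u A_v}. Since that identity is itself produced by taking the trace of the product formula \eqref{eq: product of cyclotomic matrices} of Lemma~\ref{lem: product of cyclotomic matrices}, I would first set $u = v = 0$ there to get $A_0^2 = k(\delta_{0,q'} I_\ell - E_{q',0}) + \sum_{w=0}^{\ell-1}(0,w) A_w$, and then evaluate the trace summand by summand.

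For the three trace ingredients: first, $\tr(I_\ell) = \ell$ and $\tr(E_{q',0}) = \delta_{q',0}$, the ordinary Kronecker delta that equals $1$ exactly when $q' \equiv 0 \pmod{\ell}$. Second, I would establish $\tr(A_w) = k-1$ for every $w$, noting that the diagonal of $A_w$ consists of the entries $(i-w, i-w)$, $i = 0, \ldots, \ell-1$, which run over the same multiset as the diagonal of $A_0$; by Lemma~\ref{lem: basic properties for (i,j)}(ii) one has $(i,i) = (-i, 0)$, and then Lemma~\ref{lem: basic properties for (i,j)}(iv) gives $\sum_i (i,i) = \sum_i (i,0) = k-1$. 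Third, the coefficient sum $\sum_{w=0}^{\ell-1}(0,w)$ equals $k - \delta_{0,q'}$ by Lemma~\ref{lem: basic properties for (i,j)}(iii). Assembling these yields $\tr(A_0^2) = k(\ell-1)\delta_{0,q'} + (k - \delta_{0,q'})(k-1)$, which collapses to $(q-2k)\delta_{0,q'} + k(k-1)$ once $k\ell = q-1$ is substituted.

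The final step is the parity dichotomy coming from \eqref{eq: value of q'}: one has $q' = 0$ when $k$ is even and $q' = \ell/2$ when $k$ is odd, and since $\ell \ge 2$ the residue $0$ agrees with $q'$ modulo $\ell$ precisely in the even case. Hence $\delta_{0,q'} = 1$ for $k$ even, giving $\tr(A_0^2) = k(k-1) + q - 2k$, while $\delta_{0,q'} = 0$ for $k$ odd, giving $\tr(A_0^2) = k(k-1)$, which is the claimed formula.

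I do not expect a genuine obstacle, since the statement is a direct specialization of a formula already in hand; the work is purely bookkeeping. The two points needing care are verifying the auxiliary identity $\tr(A_0) = k-1$ from Lemma~\ref{lem: basic properties for (i,j)}(ii) and (iv) rather than from tabulated values, and keeping the Kronecker delta $\delta_{0,q'} \in \{0,1\}$ appearing in the trace distinct from its value-$k$ usage elsewhere in the section. As a sanity check I would read off the odd case from Example~\ref{exmp: normality for 7^3}, where $q = 343$ and $k = 57$, so the predicted $\tr(A_0^2) = k(k-1) = 57 \cdot 56 = 3192$.
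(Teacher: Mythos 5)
Your proposal is correct and follows essentially the same route as the paper: the paper likewise takes the trace of the product formula \eqref{eq: product of cyclotomic matrices} (yielding the identity \eqref{eq: trace of A_u A_v}, of which the proposition is the case $u=v=0$) and then splits into cases via the classification of $q'$ in \eqref{eq: value of q'}. Your added details --- the verification of $\tr(A_w)=k-1$ from Lemma~\ref{lem: basic properties for (i,j)}(ii) and (iv), and the care in treating $\delta_{0,q'}$ as a $0/1$ Kronecker delta in the trace identity despite its value-$k$ definition elsewhere --- are sound and consistent with what the paper leaves implicit.
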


\begin{remk}
\label{remk: trace of A_0^3}
The trace of $A_0^2$ is independent of any cyclotomic number. Naturally, one may ask whether the trace of $A_0^3$ also enjoys such independence. By applying~\eqref{eq: product of cyclotomic matrices} and~\eqref{eq: trace of A_u A_v}, one arrives \begin{align*} \tr(A_0^3) = (0, q') (q-3k) + k^2 (k-1).
\end{align*} Although this expression does depend on the cyclotomic number $(0, q')$, it is worth noting that is does not depend on the parity of $k$.
\end{remk}

The sum of the squares of all cyclotomic numbers can be obtained from the identity $\tr(A_0^T A_0) = \tr(A_{q'} A_0)$, which evaluates to the expression $q + k (k-3)$. This offers an alternative proof of the result \cite[Lemma~2.2]{BetHirKomMun}, where the same quantity is utilized in the computation of the variance of cyclotomic numbers. 

\begin{lem}[{\cite[Lemma~2.2]{BetHirKomMun}}]
\label{lem: Lemma 2.2 of BetHirKomMun}
$\disp\sum_{0 \leq i, j \leq \ell-1} (i, j)^2 = q + k(k-3)$.
\end{lem}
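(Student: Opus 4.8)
The plan is to recognize the left-hand side as the squared Frobenius norm of the cyclotomic matrix $A_0$ and then evaluate it through the trace identity already at hand. Since the $(i,j)$-entry of $A_0$ is exactly $(i,j)$, the double sum $\disp\sum_{0 \le i,j \le \ell-1}(i,j)^2$ is precisely $\tr(A_0^T A_0)$: the $j$-th diagonal entry of $A_0^T A_0$ equals $\sum_i (A_0)_{ij}^2$, so summing over $j$ recovers the sum of squares of all entries.

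Next I would rewrite $A_0^T$ as a shifted cyclotomic matrix. Using property~(i) in the form $(i - q', j - q') \overset{*}{=} (j, i)$, the matrix $A_{q'} = [(i-q', j-q')]_{0 \le i,j \le \ell-1}$ has $(i,j)$-entry $(j,i)$, so $A_{q'} = A_0^T$; this is the relation $A_{v+q'} = A_v^T$ specialized to $v = 0$, recorded just before Example~\ref{exmp: normality for 7^3}. Consequently $\tr(A_0^T A_0) = \tr(A_{q'} A_0)$, which places the quantity squarely within the scope of the trace formula~\eqref{eq: trace of A_u A_v}.

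Finally I would apply~\eqref{eq: trace of A_u A_v} with $u = q'$ and $v = 0$. Here $u - v = q' \equiv q' \pmod{\ell}$, so the term indexed by $\delta_{u-v,q'}$ is picked up, and the formula yields $\tr(A_{q'} A_0) = (q - 2k) + k(k-1) = q + k(k-3)$. Chaining the three equalities gives the claimed identity, and the example $\ff_{7^3}$ with $\ell = 6$ (where $k = 57$ and $q + k(k-3) = 3421$) provides a quick sanity check against the matrix $A_0$ displayed in Example~\ref{exmp: normality for 7^3}.

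The argument is short because all the real content is already packaged in~\eqref{eq: product of cyclotomic matrices} and its trace~\eqref{eq: trace of A_u A_v}; I do not expect any genuine obstacle. The only points deserving care are bookkeeping ones: I must note that $-q' \equiv q' \pmod{\ell}$ holds for both parities of $k$ (from~\eqref{eq: value of q'}), so that the shift by $q'$ really does transpose $A_0$, and I must confirm that it is the diagonal case $u - v \equiv q'$ which occurs when $u = q'$, $v = 0$, so that the summand $(q-2k)$ is the one contributed.
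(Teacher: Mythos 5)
Your proposal is correct and coincides with the paper's own argument: the paper likewise identifies the sum of squares with $\tr(A_0^T A_0) = \tr(A_{q'} A_0)$ via the relation $A_{v+q'} = A_v^T$, and then evaluates~\eqref{eq: trace of A_u A_v} at $u = q'$, $v = 0$ to get $(q-2k) + k(k-1) = q + k(k-3)$. Your bookkeeping points (the diagonal case $u - v \equiv q'$ being the one that occurs, and the transpose relation holding for both parities of $k$) are exactly the right things to check, and your numerical sanity check against Example~\ref{exmp: normality for 7^3} indeed gives $3421$ on both sides.
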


This total sum of squares can be interpreted as the aggregate of standard inner products taken over each column of the cyclotomic matrix with itself. In the following section, we turn to the inner product between distinct columns of the cyclotomic matrix.

\section{Inner product of columns in cyclotomic matrices}
\label{sec: Inner product of columns in cyclotomic matrices}

We begin by rewriting the expression for the product of $A_u, A_v$ in transposed form. Using the identity $A_u^T = A_{u+q'}$ and the fact that $\delta_{u+q', v+q'} = \delta_{u,v}$, equation~\eqref{eq: product of cyclotomic matrices} becomes \begin{align} A_u^T A_v = k (\delta_{u, v} I_{\ell} - E_{u, v}) + \sum_{w=0}^{\ell-1} (u-v+q', w-v) A_w. \label{eq: product of cyclotomic matrices 2} \end{align} 
This formulation reflects how the product $A_u^T A_v$ encodes standard inner products between the column vectors of $A_u$ and those $A_v$. From this, we can derive the following identity describing inner product relationships between columns in cyclotomic matrices. 

\begin{thm}
\label{thm: product of cyclotomic numbers}
Let $i, j, u, v \in \zz$. Then
\begin{align*} \sum_{w=0}^{\ell-1} (w-u, i-u) (w-v, j-v) = k( \delta_{i,j} \delta_{u, v} - \delta_{i, u} \delta_{j, v}) + \sum_{w=0}^{\ell-1} (w-v, u-v) (w-j, i-j) 
\end{align*}
\end{thm}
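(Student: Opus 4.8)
The plan is to extract Theorem~\ref{thm: product of cyclotomic numbers} directly from the matrix identity~\eqref{eq: product of cyclotomic matrices 2} by reading off the $(i,j)$-entry of both sides, for a suitable choice of row and column indices. The left-hand side of Theorem~\ref{thm: product of cyclotomic numbers} has the shape of a standard inner product of two columns, so I expect it to come from the entry of a product $A_u^T A_v$. Concretely, I would start from
\begin{align*}
A_u^T A_v = k\,(\delta_{u,v} I_\ell - E_{u,v}) + \sum_{w=0}^{\ell-1} (u-v+q',\, w-v)\, A_w,
\end{align*}
and compute the $(a,b)$-entry of each side, where $a$ and $b$ are to be chosen so that the resulting identity matches the statement.

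First I would record the $(a,b)$-entry of the left side. Since the $(s,w)$-entry of $A_u$ is $(s-u, w-u)$, the $(w,a)$-entry of $A_u^T$ is the $(a,w)$-entry of $A_u$, namely $(a-u, w-u)$; multiplying by the $(w,b)$-entry of $A_v$, which is $(w-v, b-v)$, gives
\begin{align*}
\left(A_u^T A_v\right)_{a,b} = \sum_{w=0}^{\ell-1} (a-u,\, w-u)\,(w-v,\, b-v).
\end{align*}
To line this up with the left-hand side of the theorem, I would set $a = i$ and $b = j$, and then apply the symmetry $(i,j)\overset{**}{=}(-i,j-i)$ from Lemma~\ref{lem: basic properties for (i,j)} to rewrite each factor. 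The term $(i-u, w-u)$ should be converted to $(w-u, i-u)$ by property~(i) or by the chain~\eqref{eq: equalities of cyclotomic numbers}, after relabelling; this is the small bookkeeping step that makes the first factor of the theorem's left side appear. The main point is that both factors in the theorem, $(w-u,i-u)$ and $(w-v,j-v)$, are entries of cyclotomic matrices up to the elementary symmetries, so a careful matching of which shift sits in which slot is all that is required.

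Next I would expand the $(i,j)$-entry of the right-hand matrix. The scalar piece $k(\delta_{u,v} I_\ell - E_{u,v})$ contributes $k\,\delta_{u,v}$ on the diagonal $i=j$ and $-k$ exactly at the entry $(u,v)$, which together produce the term $k(\delta_{i,j}\delta_{u,v} - \delta_{i,u}\delta_{j,v})$ once one reads $\delta_{i,j}$ as the indicator that $i\equiv j$ and accounts for the $E_{u,v}$ entry being nonzero only when $i\equiv u$ and $j\equiv v$. The sum $\sum_w (u-v+q', w-v) A_w$ contributes $\sum_w (u-v+q', w-v)\,(i-w, j-w)$, and I would transform this via the standard symmetries into the form $\sum_w (w-v, u-v)(w-j, i-j)$ claimed in the theorem. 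This last rewriting, and the correct placement of the $q'$ shift, is the step I expect to be the main obstacle: the identity $A_u^T = A_{u+q'}$ and property~(i) each introduce a $q'$, and one must verify that these cancel so that no $q'$ survives in the final statement. I would handle this by tracking the summation index substitution $w \mapsto$ (an appropriate shift) in parallel with the symmetry $(i,j)\overset{*}{=}(j+q',i+q')$, checking that the $q'$ contributed by transposition is absorbed exactly. Once each of the three pieces is matched, the theorem follows by equating the $(i,j)$-entries.
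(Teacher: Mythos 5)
Your overall route is the same as the paper's: read off the $(i,j)$-entry of both sides of~\eqref{eq: product of cyclotomic matrices 2}, identify the scalar piece with $k(\delta_{i,j}\delta_{u,v}-\delta_{i,u}\delta_{j,v})$, and convert $\sum_{w}(u-v+q',w-v)(i-w,j-w)$ into $\sum_{w}(w-v,u-v)(w-j,i-j)$ via the $*$/$**$ symmetries together with a shift of the summation index by $q'$; your treatment of those two pieces is correct and matches the paper. However, your computation of the left-hand entry contains a genuine error. You claim $(A_u^T A_v)_{a,b}=\sum_{w}(a-u,w-u)(w-v,b-v)$, but that sum is the $(a,b)$-entry of $A_uA_v$, not of $A_u^TA_v$: in $(A_u^TA_v)_{a,b}=\sum_{w}[A_u^T]_{a,w}[A_v]_{w,b}$ you need $[A_u^T]_{a,w}=[A_u]_{w,a}=(w-u,a-u)$, whereas you substituted $[A_u^T]_{w,a}$. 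The patch you then propose --- converting $(i-u,w-u)$ into $(w-u,i-u)$ ``by property (i) or the chain~\eqref{eq: equalities of cyclotomic numbers}'' --- is not a valid identity: cyclotomic numbers are not symmetric in their two arguments. Property (i) of Lemma~\ref{lem: basic properties for (i,j)} gives $(i-u,w-u)=(w-u+q',i-u+q')$, and when $k$ is odd one has $q'=\ell/2\neq 0$, so the shifts do not vanish; plain symmetry $(s,t)=(t,s)$ holds only for even $k$ (Corollary~\ref{cor: basic properties for (i,j)}). Indeed $A_0$ in Example~\ref{exmp: normality for 7^3} is not symmetric ($(0,3)=14$ while $(3,0)=7$), and for odd $k$ one even has $\tr(A_u A_u)=k(k-1)\neq q+k(k-3)=\tr(A_u^TA_u)$, so the sums $\sum_w (i-u,w-u)(w-v,j-v)$ and $\sum_w(w-u,i-u)(w-v,j-v)$ genuinely differ for some $(i,j)$; no symmetry argument can bridge them. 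As written, your proof asserts one false intermediate identity and repairs it with a second false identity that happens to cancel the first.

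The fix is immediate, and after it your argument coincides with the paper's proof. Computing the transpose correctly gives $(A_u^TA_v)_{i,j}=\sum_{w}(w-u,i-u)(w-v,j-v)$, which is already the left-hand side of the theorem with no conversion needed (equivalently, as the paper does, use $A_u^T=A_{u+q'}$ to write the entry as $\sum_w (i-u+q',w-u+q')(w-v,j-v)$ and apply $*$ once, the two $q'$'s cancelling since $2q'\equiv 0 \pmod{\ell}$). One final small point: you should close by observing that, since every identity used is periodic modulo $\ell$, the formula established for $0\leq i,j,u,v\leq \ell-1$ extends to arbitrary integers, as the statement requires.
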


\begin{proof}
First of all, assume $0 \leq i, j, u, v \leq \ell-1$. The $(i,j)$-entry of $A_u^T A_V$ is given by $$\sum_{w=0}^{\ell-1} (i-u+q', w-u+q') (w-v, j-v) \overset{*}{=} \sum_{w=0}^{\ell-1} (w-u, i-u) (w-v, j-v).$$ On the other hand, we observe that $(u-v+q', w-v) \overset{*}{=} (w-v+q', u-v)$ and $(i-w, j-w) \overset{*}{=} (j-w+q', i-w+q') \overset{**}{=} (w-j+q', i-j)$. Thus, the $(i, j)$-entry of $\sum_{w=0}^{\ell-1} (u-v+q', w-v) A_w$ becomes \begin{align*} \sum_{w=0}^{\ell-1} (u-v+q', w-v) (i-w, j-w) & \overset{*}{=} \sum_{w=0}^{\ell-1} (w-v+q', u-v) (w-j+q', i-j) \\ & = \sum_{w=0}^{\ell-1} (w-v, u-v) (w-j, i-j), \end{align*} where the last equality holds by shifting $w$ to $w+q'$. 

Now consider the term $\delta_{u, v} I_{\ell} - E_{u, v}$. If $(i, j) = (u, v)$, its $(i, j)$-entry equals $\delta_{u,v} - 1$. Otherwise, the entry $(i, j) \neq (u, v)$ is $\delta_{i, j} \delta_{u, v}$, the $(i, j)$-entry of $\delta_{u, v} I_{\ell}$. The result is obtained by~\eqref{eq: product of cyclotomic matrices 2}. Finally, since each step holds under modular arithmetic, the identity extends to arbitrary integers $i, j, u, v$ as well. 
\end{proof}

\begin{remk}
\label{remk: Snapper}
Theorem~\ref{thm: product of cyclotomic numbers} generalizes Theorem~6.1 and~6.2 in E. Snapper's work \cite{Sna}, where the author expressed uncertainty regarding their validity when $q$ is not prime. More precisely, the number $c_{s,t}$ in his notation corresponds exactly to the cyclotomic number $(s+q', t)$, and satisfies the symmetry $c_{s,t} = c_{t,s}$. 

When $i = u \neq 0$ and $j = v = 0$, the setting of his Theorem~6.1 arises as a special case of our result. 

To recover his Theorem~6.2, consider the case $u \neq v$ and $(i, j) \not\equiv (u, v)$ modulo $\ell$, so that $\delta_{u, v} = \delta_{i, u} \delta_{j,v} = 0$. Using the identity $(w-v, j-v) \overset{*}{=} (j-v+q', w-v+q') \overset{**}{=} (v-j +q', w-j) \overset{*}{=} (w-j+q', v-j)$, define $a = i-u$, $b = v-j$ and $m = u-j+q'$. Then we obtain $$\sum_{w=0}^{\ell-1} (w-u, i-u) (w-v, j-v) = \sum_{w=0}^{\ell-1} (w-u, a) (w-j+q', b) = \sum_{w=0}^{\ell-1} (w, a) (w+m, b),$$ after shifting the index $w$ to $w+q'$. On the other hand, observe that $(w-v, u-v) \overset{**}{=} (v-w, u-w) \overset{*}{=} (u-w+q', v-w+q') \overset{**}{=} (w-u+q', v-u)$. Set $a' = v-u$ and $b' = i-j$, and then $$ \sum_{w=0}^{\ell-1} (w-v, u-v) (w-j, i-j) = \sum_{w=0}^{\ell-1} (w-u+q', a') (w-j, b') = \sum_{w=0}^{\ell-1} (w, a') (w+m, b'),$$ 
after shifting $w$ to $w+u+q'$. Therefore, Theorem~\ref{thm: product of cyclotomic numbers} implies the identity \begin{align} \sum_{w=0}^{\ell-1} (w, a) (w+m, b) = \sum_{w=0}^{\ell-1} (w, a') (w+m, b'). \label{eq: Snapper's Theorem 6.2} \end{align} This is exactly the statement of Snapper's Theorem~6.2. In this setup, the parameters satisfy $a' \not\equiv 0 \pmod{\ell}$, $a+b = a'+b'$ and $m \equiv q' + b - a'$. 

Notably, Snapper's original hypothesis requires $a \not\equiv 0 \pmod{\ell}$, whereas our result only assume $(a,b) \neq (0,0)$, which is guaranteed when $(i, j) \neq (u,v)$. As an exmple, equation~\eqref{eq: Snapper's Theorem 6.2} holds for $a=0$, $b = 3$, $a' = 1$, and $b' = 2$, as verified by Example~\ref{exmp: normality for 7^3}.

The matrix $[c_{s,t}]=[(s+q', t)]$ introduced in Snapper's formulation will also appear again in Subsection~\ref{subsec: Shifted cyclotomic matrices}.
\end{remk}

We now derive the standard inner product between any two columns of the cyclotomic matrix $A_0$. Remarkably, the square sum of each column depends only on the entries in the first column.

\begin{prop}
\label{prop: dot product of columns}
Let $1 \leq i \leq \ell-1$. Then $$\sum_{w=0}^{\ell-1} (w, i)^2 = k + \sum_{w=0}^{\ell-1} (w, 0) (w - i, 0).$$ For $0 \leq i \neq j \leq \ell-1$, we have $$\sum_{w=0}^{\ell-1} (w, i) (w, j) = \sum_{w=0}^{\ell-1} (w, 0) (w - j, i-j).$$
In addition, if $k$ is odd (equivalently, $q' \neq 0$), then $$\sum_{w=0}^{\ell-1} (w, q')^2 = k + \sum_{w=0}^{\ell-1} (w, 0)^2,$$ and for $(i, j) \not\equiv (0, 0), (q', q') \pmod{\ell}$, we have $$\sum_{w=0}^{\ell-1} (w, i) (w, j) = \sum_{w=0}^{\ell-1} (w, i+q') (w, j+q').$$ 
\end{prop}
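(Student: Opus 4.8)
The plan is to obtain all four identities from a single master formula. Each sum in the statement is the standard inner product of two columns of $A_0$, i.e.\ an entry of $A_0^TA_0$, so I would specialize Theorem~\ref{thm: product of cyclotomic numbers} to $u=v=0$. This gives, for all $i,j$,
\[
\sum_{w=0}^{\ell-1}(w,i)(w,j)=k\bigl(\delta_{i,j}-\delta_{i,0}\delta_{j,0}\bigr)+\sum_{w=0}^{\ell-1}(w,0)(w-j,i-j),
\]
where $\delta_{s,t}$ takes the value $1$ when $s\equiv t\pmod{\ell}$ and $0$ otherwise (so that the scalar block reproduces the $(i,j)$-entry of $k(I_\ell-E_{0,0})$ coming from~\eqref{eq: product of cyclotomic matrices 2}). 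Parts one and two then drop out by inspection of the bracket: for $j=i$ with $1\le i\le\ell-1$ one has $\delta_{i,i}=1$ and $\delta_{i,0}=0$, leaving the constant $k$ and the sum $\sum_w(w,0)(w-i,0)$; for $0\le i\ne j\le\ell-1$ both $\delta_{i,j}$ and $\delta_{i,0}\delta_{j,0}$ vanish, leaving $\sum_w(w,0)(w-j,i-j)$.

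The engine for parts three and four is the periodicity of the first column of $A_0$ when $k$ is odd, namely $(w,0)=(w+q',0)$ for every $w$. I would prove this directly from Lemma~\ref{lem: basic properties for (i,j)}(i),(ii) together with $-q'\equiv q'\pmod{\ell}$ and $2q'\equiv0\pmod{\ell}$, via the chain
\[
(w,0)\overset{*}{=}(q',w+q')\overset{**}{=}(-q',w)=(q',w)=(q',w+2q')\overset{*}{=}(w+q',0).
\]
With this in hand, part three follows by applying the master formula with $i=j=q'$, which is legitimate since $1\le q'=\tfrac{\ell}{2}\le\ell-1$: here $\delta_{q',q'}=1$ and $\delta_{q',0}=0$, so $\sum_w(w,q')^2=k+\sum_w(w,0)(w-q',0)$, and replacing $(w-q',0)$ by $(w,0)$ through the periodicity turns the right-hand sum into $\sum_w(w,0)^2$.

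For part four I would apply the master formula to each side and compare. The scalar contributions are $k\bigl(\delta_{i,j}-\delta_{i,0}\delta_{j,0}\bigr)$ and, after using $\delta_{i+q',j+q'}=\delta_{i,j}$ and $\delta_{i+q',0}=\delta_{i,q'}$, the quantity $k\bigl(\delta_{i,j}-\delta_{i,q'}\delta_{j,q'}\bigr)$; the hypothesis $(i,j)\not\equiv(0,0),(q',q')\pmod{\ell}$ forces both products $\delta_{i,0}\delta_{j,0}$ and $\delta_{i,q'}\delta_{j,q'}$ to vanish, so the two scalar terms are equal. It then remains only to identify the residual sums $\sum_w(w,0)(w-j,i-j)$ and $\sum_w(w,0)(w-j-q',i-j)$, which I would do by reindexing the latter via $w\mapsto w+q'$ and invoking $(w+q',0)=(w,0)$. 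The one genuinely nontrivial ingredient is the column-periodicity lemma; the main hazard is purely clerical, namely keeping the diagonal Kronecker values straight and tracking the index shifts by $q'$ consistently modulo $\ell$, after which every step is substitution or reindexing.
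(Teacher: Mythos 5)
Your proposal is correct and follows essentially the same route as the paper: both specialize Theorem~\ref{thm: product of cyclotomic numbers} to $u=v=0$ to get the master formula, read off the first two identities from the Kronecker terms, and then handle the odd-$k$ cases using the identities of Lemma~\ref{lem: basic properties for (i,j)}. The only cosmetic difference is in the final identity, where the paper equates the two residual sums term-by-term via the pointwise identity $(w-j,i-j)=(w-(i+q'),j-i)$, whereas you reindex $w\mapsto w+q'$ and invoke the column periodicity $(w,0)=(w+q',0)$ --- an equivalent manipulation.
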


\begin{proof}
Applying $u = v = 0$ in Theorem~\ref{thm: product of cyclotomic numbers}, we have $$\sum_{w=0}^{\ell-1} (w, i) (w, j) = k (\delta_{i, j} - \delta_{i,0} \delta_{j, 0}) + \sum_{w=0}^{\ell-1} (w, 0) (w - j, i-j),$$ from which the first two equalities follows directly. Now suppose $k$ is odd. Since $(w - q', 0) \overset{**}{=} (q'-w, q'-w) \overset{*}{=} (-w, -w) \overset{**}{=} (w, 0)$, the sum of $(w,q')^2$ follows. For the final equality, we observe that $(w - j, i - j) \overset{**}{=} (j - w, i - w) \overset{*}{=} (i+q' - w, j+q' - w) \overset{**}{=} (w - (i+q'), j-i)$. Thus, \begin{align*} & \mbox{} \quad \sum_{w=0}^{\ell-1} (w, i) (w, j) - \sum_{w=0}^{\ell-1} (w, j+q') (w, i+q') \\ & = k (\delta_{i,j} - \delta_{i, 0} \delta_{j, 0}) - k (\delta_{j+q', i+q'} - \delta_{j+q',0} \delta_{i+q',0}) \\ & = k( -\delta_{i,0} \delta_{j,0} + \delta_{i, q'} \delta_{j, q'}).\end{align*} The result follows. 
\end{proof}




\begin{exmp}
Consider the field $\ff_{131}$ with $\ell=10$ and $k = 13$. The cyclotomic matrix $A_0$, constructed with respect to $\ell$ and a primitive root $2$, is given by 
$$A_0 = \left[\begin{matrix} 2 & 1 & 0 & 0 & 2 & 0 & 2 & 0 & 4 & 2 \\ 1 & 0 & 2 & 0 & 2 & 2 & 2 & 2 & 1 & 1 \\ 1 & 1 & 2 & 3 & 1 & 0 & 2 & 0 & 0 & 3 \\ 2 & 1 & 1 & 1 & 1 & 4 & 1 & 0 & 0 & 2 \\ 0 & 1 & 1 & 1 & 1 & 2 & 1 & 3 & 2 & 1 \\ 2 & 1 & 1 & 2 & 0 & 2 & 1 & 1 & 2 & 0 \\ 1 & 2 & 1 & 3 & 2 & 1 & 0 & 1 & 1 & 1 \\ 1 & 1 & 4 & 1 & 0 & 0 & 2 & 2 & 1 & 1 \\ 2 & 3 & 1 & 0 & 2 & 0 & 0 & 3 & 1 & 1 \\ 0 & 2 & 0 & 2 & 2 & 2 & 2 & 1 & 1 & 1 \end{matrix}\right].$$ To help verify the validity of Proposition~\ref{prop: dot product of columns}, we provide the corresponding {\it Gram matrix} $A_0^T A_0$, which represents the inner products between columns of $A_0$: $$A_0^T A_0 = \left[\begin{matrix} 20 & 16 & 15 & 13 & 15 & 15 & 14 & 13 & 17 & 16 \\ 16 & 23 & 14 & 18 & 19 & 14 & 13 & 19 & 16 & 16 \\ 15 & 14 & 29 & 17 & 12 & 13 & 19 & 20 & 12 & 17 \\ 13 & 18 & 17 & 29 & 15 & 17 & 16 & 12 & 12 & 18 \\ 15 & 19 & 12 & 15 & 23 & 16 & 16 & 17 & 18 & 18 \\ 15 & 14 & 13 & 17 & 16 & 33 & 16 & 15 & 13 & 15 \\ 14 & 13 & 19 & 16 & 16 & 16 & 23 & 14 & 18 & 19 \\ 13 & 19 & 20 & 12 & 17 & 15 & 14 & 29 & 17 & 12 \\ 17 & 16 & 12 & 12 & 18 & 13 & 18 & 17 & 29 & 15 \\ 16 & 16 & 17 & 18 & 18 & 15 & 19 & 12 & 15 & 23 \end{matrix}\right].$$ 
\end{exmp}

\begin{remk}
Suppose $k$ is odd. According to Proposition~\ref{prop: dot product of columns}, one has $$\sum_{w=0}^{\ell-1} (w, j)^2 = \sum_{w=0}^{\ell-1} (w, j+q')^2$$ for $j = 1, \ldots, q'-1$, provided $\ell \neq 2$. For instance, when $\ell = 4$, one has $q' = 2$, and observe that $(w, 1) \overset{**}{=} (-w, 1-w) \overset{*}{=} (3-w, 2-w) \overset{**}{=} (w-3, -1) = (w-3, 1 + q')$, where the last equality uses the congruence $-1 \equiv 1+q' \pmod{4}$. It follows that the multiset of the entries in the column vector $[(w, 1)]_{0 \leq w \leq \ell-1}$ is a permutation of the entries in the column vector $[(w, 1+q')]_{0 \leq w \leq \ell-1}$. Hence, the identity above holds trivially in this case. 

Through computational verification, the same phenomenon appears to hold when $\ell = 6$ and $\ell = 8$. However, we currently lack a conceptual explanation for this behavior. We therefore propose the following question for interested reader.

\begin{ques}
For which value $\ell$ with odd $k$, are the entries of the cyclotomic matrix column indexed by $j$ a permutation of those indexed by $j+q'$, for each $j = 1, \ldots, q'-1$?
\end{ques}
\end{remk}

\section{Power Difference Sets}
\label{sec: Power Difference Sets}

One of our motivations for studying cyclotomic numbers originates from a classical problem in the theory of difference sets. Let $G$ be a finite group. A subset $H$ of $G$ is called a {\it difference set} if there exists a positive integer $\lambda$ such that every nontrivial element of $G$ can be expressed as $x y^{-1}$ for exactly $\lambda$ distinct pairs of $(x, y)$ with $x, y \in H$. Such a set is referred to as a {\it $(v, k, \lambda)$-difference set}, where $v = |G|$ and $k = |H|$. The concept of difference sets has its origins in the study of finite projective geometry. It was later recognized to be closely connected with combinatorial design theory, especially with balanced incomplete block designs (BIBDs). We refer the reader to the surveys \cite{MHall2, Jungnickel-Pott, Momihara-Wang-Xiang} for further background and developments on this topic.

In our setting, we take $G$ to be the additive group of the finite field $\ff_q$. A subset $K$ of $\ff_q$ that is both a multiplicative subgroup of $\ff_q^*$ and a difference set with respect to the additive group structure is called a {\it power difference set}. In the literature, such sets are also known as {\it power residue difference set} (particularly when $q$ is prime)~\cite{Leh3}, or {\it cyclotomic difference set}~\cite{BXia}. The name ``power difference set'' arises from the fact that $K$ consists precisely of all $\ell$-th powers in $\ff_q^*$, where $\ell$ is the index $[\ff_q^* : K]$. In what follows, we will consider only difference sets of this type.

Let $p$ be the characteristic of $\ff_q$. It was known from Paley \cite{Paley} that the subgroup of squares in $\ff_p^*$ (that is, the case $\ell=2$) forms a difference set if and only if $p \equiv 3 \pmod{4}$. Later, Chowla~\cite{Chowla} established that the subgroup of fourth powers (i.e., $\ell=4$) forms a difference set whenever $p = 1 + 4 t^2$ for some odd integer $t$. 

In 1953, Emma Lehmer established the following theorem in connection with cyclotomic numbers. While her original result was stated for $\ff_p$, where $p$ is an odd prime, her argument applies equally well to any finite field $\ff_q$. For convenience, we restate her result and include a proof. The proof relies on the following lemma, where $g$ denotes a generator of $\ff_q^*$, as defined earlier.

\begin{lem}[{\cite[Lemma~I]{Leh3}}]
\label{lem: Lemma I in Leh3}
For $0 \leq j \leq \ell-1$, the cyclotomic number $(0, j)$ is odd if and only if $2 \in g^j K$.
\end{lem}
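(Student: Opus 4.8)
The plan is to reinterpret $(0,j)$ as the cardinality of a set carrying a natural involution, so that its parity is dictated entirely by the fixed points of that involution. By definition $(0,j) = |(1+K)\cap g^j K|$, which counts exactly the elements $x \in K$ with $1 + x \in g^j K$. Writing $S_j = \{x \in K : 1+x \in g^j K\}$, we have $(0,j) = |S_j|$, and the task reduces to determining the parity of $|S_j|$.

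First I would introduce the map $\iota : x \mapsto x^{-1}$. Since $K$ is a multiplicative subgroup of $\ff_q^*$, this is a well-defined involution on $K$. The key computation is to check that $\iota$ restricts to $S_j$: if $x \in S_j$, then $1 + x^{-1} = (1+x)x^{-1}$, and since $1+x \in g^j K$ and $x^{-1} \in K$, the product lies in $g^j K \cdot K = g^j K$; hence $x^{-1} \in S_j$ as well. Thus $\iota$ is an involution of the finite set $S_j$.

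Next I would count its fixed points. The involution fixes $x$ precisely when $x^2 = 1$, i.e. $x = \pm 1$. The element $x = -1$ cannot lie in $S_j$, because $1 + (-1) = 0 \notin g^j K$; here the oddness of $p$ enters, guaranteeing $2 \neq 0$ so that $2$ genuinely sits in some coset. Hence the only candidate fixed point is $x = 1$, and since $1 \in K$ always, we have $1 \in S_j$ if and only if $1+1 = 2 \in g^j K$. As the non-fixed points of $\iota$ pair off, $|S_j|$ is congruent modulo $2$ to the number of fixed points, which is $1$ when $2 \in g^j K$ and $0$ otherwise. This gives exactly the stated equivalence.

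The crux of the argument—and the one step requiring insight rather than routine verification—is selecting the correct involution and confirming the closure $\iota(S_j) \subseteq S_j$; once $x \mapsto x^{-1}$ is in hand, this is a one-line coset computation. A secondary subtlety is ensuring that $x = -1$ really drops out and does not contribute a spurious fixed point, which follows immediately from $0 \notin g^j K$. I expect no genuine obstacle beyond these two observations.
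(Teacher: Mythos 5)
Your proof is correct and follows essentially the same route as the paper: both use the involution $x \mapsto x^{-1}$ on $\{x \in K : 1+x \in g^jK\}$ via the identity $1+x^{-1} = (1+x)x^{-1}$, note that $x=-1$ is excluded since $0 \notin g^jK$, and conclude that the parity is governed by the sole possible fixed point $x=1$, which lies in the set exactly when $2 \in g^jK$. Your write-up merely makes the pairing/fixed-point bookkeeping more explicit than the paper does.
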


\begin{proof}
Let $x \in K$. Then $1+x \in g^j K$ if and only if $1 + x^{-1} \in g^j K$, since $1+x^{-1} = (1+x) x^{-1}$, and $K$ is closed under multiplication. Therefore, the set $(1+K) \cap g^j K$ contains both $1 + x$ and $1 + x^{-1}$ for each such $x$, provided that if $(0, j) \neq 0$. The only situation in which $x = x^{-1}$ occurs is when $x=1$, since $g^j K$ does not contain $0$. 
\end{proof}

We now rephrase \cite[Theorem~I, II and III]{Leh3} into the following formulation.

\begin{thm}[{\cite{Leh3}}]
\label{thm: Theorem I, II, III in Leh3}
Let $K$ be a subgroup of $\ff_q^*$ with index $\ell \geq 2$, and let $k = |K|$. Then $K$ is a difference set of $\ff_q$ if and only if $(0,0) = (1, 0) = \cdots = (\ell-1, 0)$. In this case, $k$ is odd, $\ell$ is even and $(0, 0) = \frac{k-1}{\ell}$.
\end{thm}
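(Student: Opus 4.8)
The plan is to reduce the difference-set condition to a statement about a single column of the cyclotomic matrix by counting representations directly, and then to read off the arithmetic conclusions from the column-sum identity and from Lemma~\ref{lem: Lemma I in Leh3}.

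First I would translate the difference-set property into cyclotomic numbers. For a nonzero $z$, let $N(z)$ denote the number of pairs $(x,y) \in K \times K$ with $x - y = z$. Scaling by an element of $K$ (the map $(x,y) \mapsto (cx,cy)$ for $c \in K$) shows that $N(z)$ depends only on the coset $g^m K$ containing $z$, so it suffices to compute $N(g^m)$. Writing $x = y + g^m$ and factoring $y + g^m = y(1 + g^m y^{-1})$, membership $x \in K$ is equivalent to $1 + g^m y^{-1} \in K$; as $y$ runs over $K$ the element $w = g^m y^{-1}$ runs over $g^m K$, whence $N(g^m) = |\{w \in g^m K : 1 + w \in K\}| = (m, 0)$. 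Since the cosets partition $\ff_q^*$, the set $K$ is a difference set exactly when all the counts $(m,0)$ coincide, that is, when $(0,0) = (1,0) = \cdots = (\ell-1,0)$, with common value $\lambda$. This settles the equivalence in both directions.

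Next I would pin down the common value. Applying part~(iv) of Lemma~\ref{lem: basic properties for (i,j)} with $j = 0$ gives $\sum_{i=0}^{\ell-1}(i,0) = k-1$; since every summand equals $\lambda$, we obtain $\ell \lambda = k-1$, so $(0,0) = \lambda = \frac{k-1}{\ell}$, and in particular $\ell \mid k-1$.

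The main obstacle is the parity claim that $k$ must be odd, and this is precisely where Lemma~\ref{lem: Lemma I in Leh3} is needed. I would argue by contradiction: suppose $k$ is even, so $q' = 0$ and, by Corollary~\ref{cor: basic properties for (i,j)}, the cyclotomic matrix is symmetric, $(i,0) = (0,i)$. The difference-set condition then forces the entire first row $(0,0), (0,1), \ldots, (0,\ell-1)$ to be constant, hence all of the same parity. But Lemma~\ref{lem: Lemma I in Leh3} asserts that $(0,j)$ is odd precisely when $2 \in g^j K$, and since $2 \in \ff_q^*$ lies in exactly one coset, exactly one of these $\ell \geq 2$ entries is odd---contradicting that they are all equal. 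Hence $k$ is odd. Finally, from $q-1 = \ell k$ with $q$ odd (so $q-1$ even) together with $k$ odd, it follows that $\ell$ is even, which completes the argument.
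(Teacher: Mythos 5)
Your proposal is correct and follows essentially the same route as the paper's own proof: the same representation-count identifying $|\{(x,y)\in K\times K : x-y=z\}|$ with $(i,0)$ for $z\in g^iK$, the same use of Lemma~\ref{lem: basic properties for (i,j)}(iv) to get $\lambda=\frac{k-1}{\ell}$, and the same parity contradiction via $q'=0$, the symmetry $(i,0)=(0,i)$, and Lemma~\ref{lem: Lemma I in Leh3}. Your explicit remark that $2$ lies in exactly one coset, so exactly one entry of the constant row would be odd, just spells out the contradiction the paper leaves implicit.
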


\begin{proof}
Let $z \in \ff_q^*$, and suppose $z \in g^i K$. Then \begin{align*}\left| \{(x, y) \in K \times K \mid x-y = z \} \right| & = \left| \{y \in K \mid y + z \in K \} \right| \\ & = \left| \{y \in K \mid 1 + y^{-1} z \in y^{-1} K \} \right| \\ & = \left| (1 + g^i K) \cap K \right| \\ & = (i, 0). \end{align*}
Therefore, $K$ is a difference set of $\ff_q$ if and only if all values $(i, 0)$ are equal, which we denote by a constant $\lambda$. By part~(iv) of Lemma~\ref{lem: basic properties for (i,j)}, we obtain $\lambda = (k-1) / \ell$. 

Now suppose that $k$ is even. Then $q' = 0$, and so $(i, 0) \overset{*}{=} (0, i)$ for each $i$. In particular, all of the values $(0, i)$ would share the same parity, which contradicts to Lemma~\ref{lem: Lemma I in Leh3}. Hence, $k$ must be odd, and since $q-1 = \ell k$, it follows that $\ell$ must be even. 
\end{proof}

For the case where $q = p$, Lehmer utilized the formulas for cyclotomic numbers originally derived by Gauss to give an alternative proof of Paley's result when $\ell = 2$, as well as the necessity of Chowla's condition when $\ell = 4$. She further showed the nonexistence of power difference sets for $\ell = 6$, and established a necessary and sufficient condition for the existence of such sets for $\ell = 8$, based on formulas of cyclotomic numbers obtained by L.E. Dickson \cite{Dic}. Since then, much effort has been devoted to deriving explicit formulas for cyclotomic numbers. However, these formulas become increasingly intricate and difficult to determine as $\ell$ grows. To date, the nonexistence of power difference sets have been established using formulas for $\ell = 10$~\cite{Whi3}, 12~\cite{Whi4}, 14~\cite{Mus}, 16~\cite{Whi2, Eva}, 18~\cite{BauFre}, 20~\cite{MusWhi, Eva4}, and $24$~\cite{Eva3, EvaVan}. Instead of developing such formulas for each individual $\ell$, a computational method applicable to general $\ell$ was proposed in \cite{BXia}, which showed the nonexistence for $\ell=22$. This method was further extended  in \cite{BXia2} to establish the nonexistence for $\ell=26, 32$. These results motivate the following conjecture.

\begin{conjecture}
\label{conj: power difference set}
If the multiplicative subgroup of $\ell$-th powers forms a difference set of $\ff_p$, then $\ell = 2, 4, 8$.
\end{conjecture}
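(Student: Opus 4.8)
Because the statement is the long-standing open conjecture rather than a settled result, I can only lay out a program and indicate where it stalls; the purpose of the matrix machinery developed above is precisely to recast the problem in terms of intrinsic invariants. The plan is to pass from the combinatorial difference-set condition to an arithmetic statement about Gauss periods, extract every congruence and integrality obstruction that the cyclotomic matrix supplies, and then isolate the single Gauss-sum input that currently resists a uniform treatment across all $\ell$.

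First I would fix the parameters. By Theorem~\ref{thm: Theorem I, II, III in Leh3}, the hypothesis forces $k$ odd, $\ell$ even, and $\lambda = \frac{k-1}{\ell}$, so $(\ff_p, +)$ carries a symmetric $(p, k, \lambda)$-difference set with the distinguishing feature that every element of $K$ is a multiplier, an unusually large multiplier group. The standard character-sum criterion then says $K$ is a difference set iff $|\eta_a|^2 = k - \lambda$ for every $a \in \ff_p^\times$, where $\eta_a = \sum_{x \in K}\zeta_p^{ax}$ is a Gauss period; here $k - \lambda = \frac{p-k}{\ell}$, which is exactly the modulus-squared of the eigenvalues $\pm\sqrt{k-\lambda}$ appearing in $\Spec(S)$ of Theorem~\ref{thmA:3}. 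Thus the constancy of $|\eta_a|$ is the arithmetic heart of the conjecture, captured spectrally by Theorem~\ref{thmA:1} and Theorem~\ref{thmA:3}.

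Next I would extract every necessary condition the structure allows, aiming to confine the conjecture to a well-controlled residual set of $\ell$. Since $p$ is odd, the Bruck--Ryser--Chowla theorem demands a nontrivial integral solution of $z^2 = (k-\lambda)x^2 + (-1)^{(p-1)/2}\lambda y^2$. When $\ell(k-\lambda) = p - k$ is a perfect square---in particular when $\lambda = 1$, where a projective plane would arise---Theorem~\ref{thmA:2} forces $4 \mid \ell$ or $\ell$ to be a sum of two odd squares, excluding many $\ell$ outright in that regime. The determinant identity $\det(A) = -\lambda(k-\lambda)^{\ell/2-1}$ and the minimal polynomials of $M$ and $S$ feed further integrality constraints into this sieve, replacing the ad hoc cyclotomic-number formulas used in the literature by intrinsic invariants.

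The decisive and, I expect, genuinely hard step is to eliminate every even $\ell \notin \{2,4,8\}$ simultaneously. Here I would analyze $\eta_a = \frac{1}{\ell}\bigl(-1 + \sum_{j=1}^{\ell-1}\overline{\theta}^{\,j}(a)\,\tau_j\bigr)$, where $\theta$ has order $\ell$ and each Gauss sum $\tau_j$ has modulus $\sqrt p$, and use Stickelberger's congruence together with the action of $\Gal(\qq(\zeta_\ell,\zeta_p)/\qq)$ to force the $\tau_j$ into the rigid ``pure Gauss sum'' relations that $|\eta_a|^2 = k - \lambda$ entails; the large multiplier group and Schmidt's field-descent and exponent bounds should then cap the admissible $\ell$. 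The obstruction is precisely that there is no closed form for the Gauss periods valid across all $\ell$---the same phenomenon flagged in the introduction---so the constant-modulus condition has so far only been verifiable case by case (all known even $\ell \le 32$), and it fails to be settled even in the perfect-square regime, where values such as $\ell = 10 = 1^2 + 3^2$ survive Theorem~\ref{thmA:2} yet must still be excluded. Converting the invariants of Theorem~\ref{thmA:1} and Theorem~\ref{thmA:3} into an absolute bound on $\ell$ is the essential gap that keeps the conjecture open.
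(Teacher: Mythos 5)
The statement you were asked to prove is not a theorem of the paper at all: it is Conjecture~\ref{conj: power difference set}, which the paper explicitly presents as an open problem motivated by the case-by-case nonexistence results in the literature ($\ell = 6$ by Lehmer, then $\ell = 10, 12, 14, 16, 18, 20, 22, 24, 26, 32$ by later authors). The paper supplies no proof, only partial structural results (Theorem~\ref{thm: A^T A for a difference set K}, Theorem~\ref{thm: if ell(k-lambda) is a square}, Proposition~\ref{prop: spectrum of M}, and the congruence lemmas of Subsection~\ref{subsec: Some necessary conditions and equivalent statements}). You correctly recognized this and did not claim a proof, so there is no paper argument to compare yours against; the only ``gap'' is the conjecture itself, which your proposal leaves open exactly where everyone else does. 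Your program is mathematically sound as far as it goes: the character-sum criterion $|\eta_a|^2 = k - \lambda$ for all $a \in \ff_p^{\times}$ is the correct reformulation of the difference-set property, the identity $k - \lambda = (p-k)/\ell$ does match the eigenvalues $\pm\sqrt{k-\lambda}$ of $S$ in Theorem~\ref{thmA:3}, the Bruck--Ryser--Chowla equation applies to this symmetric design, every element of $K$ is indeed a multiplier, and your Gauss-period expansion $\eta_a = \frac{1}{\ell}\bigl(-1 + \sum_{j=1}^{\ell-1}\overline{\theta}^{\,j}(a)\tau_j\bigr)$ is correct.

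Two small factual corrections. First, your parenthetical ``all known even $\ell \le 32$'' overstates the record as cited in the paper: the listed nonexistence results skip $\ell = 28$ and $\ell = 30$, so the settled cases are not an initial segment of the even numbers. Second, your example of $\ell = 10$ ``surviving'' Theorem~\ref{thmA:2} is fine as an illustration that the perfect-square criterion alone is too weak, but note that $\ell = 10$ was in fact excluded long ago by Whiteman via explicit cyclotomic-number formulas; the honest residual set for your sieve is the even $\ell$ beyond the computationally settled range. With those caveats, your assessment of where the difficulty lies---converting the intrinsic invariants of Theorems~\ref{thmA:1} and~\ref{thmA:3} into an absolute bound on $\ell$, in the absence of uniform Gauss-period evaluations---agrees with the paper's own framing of why the conjecture remains open.
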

 
We remark here that the nonexistence of power difference set in a finite field of characteristic $2$ has been established in \cite[Theorem~4.1]{BXia}.

\subsection{Some necessary conditions and equivalent statements}
\label{subsec: Some necessary conditions and equivalent statements}

\mbox{}

We present several congruence conditions on $q$ and $\ell$ in relation to power difference sets, particularly focusing on the parity of $\lambda$. Recall that $2$ is a square in $\ff_q$ if and only if $q \equiv \pm 1 \pmod{8}$. 

\begin{lem}
\label{lem: difference set with odd lambda}
If $K$ is a $(q, k, \lambda)$-difference set with odd $\lambda$, then one of the following holds: 
\begin{enumerate}
\item[(i)] $\ell \equiv 0 \pmod{8}$ and $q \equiv k \equiv 1 \pmod{8}$; 
\item[(ii)] $\ell \equiv 2 \pmod{8}$, $q \equiv 7 \pmod{8}$ and $k \equiv 2 \lambda + 1 \pmod{8}$.
\end{enumerate}
\end{lem}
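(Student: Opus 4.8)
The plan is to reduce everything to elementary congruences once we locate the coset of $\ff_q^*$ containing $2$. By Theorem~\ref{thm: Theorem I, II, III in Leh3}, the hypothesis that $K$ is a $(q,k,\lambda)$-difference set already forces $k$ to be odd, $\ell$ to be even, $q'=\ell/2$, and $\lambda=(0,0)=\frac{k-1}{\ell}$; in particular we have the two defining relations $q-1=\ell k$ and $k=\ell\lambda+1$, which will carry all the arithmetic. The first substantive step is to apply Lemma~\ref{lem: Lemma I in Leh3} with $j=0$: since $\lambda=(0,0)$ is assumed odd, the lemma gives $2\in g^0K=K$. Because $\ell$ is even, $K$ is contained in the group of squares of $\ff_q^*$, so $2$ is a square in $\ff_q$; by the criterion recalled above this means $q\equiv \pm 1\pmod 8$. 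The whole argument then splits according to which sign occurs.

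In the case $q\equiv 1\pmod 8$ I would argue through $2$-adic valuations. Writing $v_2$ for the $2$-adic valuation, the relation $q-1=\ell k$ together with $k$ odd gives $v_2(\ell)=v_2(q-1)\ge 3$, so $8\mid\ell$. Feeding $8\mid\ell$ back into $k=\ell\lambda+1$ and $q=\ell k+1$ and reducing modulo $8$ yields $k\equiv 1\pmod 8$ and $q\equiv 1\pmod 8$, which is exactly conclusion~(i). Note that in this branch the parity of $\lambda$ is no longer needed beyond having produced $2\in K$.

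In the case $q\equiv 7\pmod 8$ the same valuation count gives $v_2(q-1)=1$, hence $v_2(\ell)=1$, and we may write $\ell=2m$ with $m$ odd. This only pins $\ell$ down modulo $4$, and the delicate point is to upgrade it to modulo $8$. Here the oddness of $\lambda$ is essential: reducing $k=2m\lambda+1$ modulo $4$ and using that $m\lambda$ is odd gives $k\equiv 3\pmod 4$, while reducing $q-1=2mk\equiv 6\pmod 8$ gives $mk\equiv 3\pmod 4$; combining these forces $m\equiv 1\pmod 4$. Therefore $\ell=2m\equiv 2\pmod 8$, and then $k=2m\lambda+1\equiv 2\lambda+1\pmod 8$, which is conclusion~(ii).

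The only genuinely conceptual step is the first one---translating ``$\lambda$ odd'' into the coset statement $2\in K$ via Lemma~\ref{lem: Lemma I in Leh3}, and then into $q\equiv\pm1\pmod 8$---after which the proof is routine modular bookkeeping built from $q-1=\ell k$ and $k=\ell\lambda+1$. The one place that requires care is the refinement in case~(ii) from $\ell\equiv 2\pmod 4$ to $\ell\equiv 2\pmod 8$: it cannot be read off from the valuation of $q-1$ alone and genuinely consumes the hypothesis that $\lambda$ is odd. Indeed, under $v_2(\ell)=1$ and $\lambda$ odd the two assertions $\ell\equiv 2\pmod 8$ and $k\equiv 2\lambda+1\pmod 8$ in~(ii) are equivalent to one another, so establishing either one suffices.
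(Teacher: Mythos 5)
Your proposal is correct and follows essentially the same route as the paper: the pivotal step (odd $\lambda$ forces $(0,0)=\lambda$ odd, hence $2\in K$ by Lemma~\ref{lem: Lemma I in Leh3}, hence $2$ is a square and $q\equiv\pm 1\pmod 8$) is identical, and the rest is the same congruence bookkeeping with $q-1=\ell k$ and $k=\ell\lambda+1$. The only difference is organizational: the paper computes $q\equiv \lambda\ell^2+\ell+1\equiv\ell^2+\ell+1\pmod 8$ once and reads off which even residues of $\ell$ are compatible with $q\equiv\pm1$, whereas you split on $q\bmod 8$ first and recover $\ell\bmod 8$ via $2$-adic valuations plus a mod-$4$ refinement.
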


\begin{proof}
Since $\lambda$ is odd, $2$ must be a square in $\ff_q$, which implies $q \equiv \pm 1 \pmod{8}$. On the other hand, $q = \ell k + 1 = \ell (\lambda \ell + 1) + 1 \equiv \ell^2 + \ell + 1 \pmod{8}$, from which we conclude $\ell \equiv 0 \pmod{8}$ when $q \equiv 1 \pmod{8}$, and $\ell \equiv 2 \pmod{8}$ when $q \equiv -1 \pmod{8}$. The desired congruence for $k = \lambda \ell + 1$ follows accordingly.  
\end{proof}

The congruence of $\lambda$ modulo $4$ in case~(i) of the preceding lemma is further clarified in Proposition~\ref{prop: odd lambda}.

The following result is inspired by {\cite[Theorem~4, remarked by A. Schinzel]{Mus}}.

\begin{lem}
\label{lem: parity of lambda}
Suppose $K$ is a difference set with $\ell \equiv 2 \pmod{4}$. Then $\lambda \equiv \frac{k-1}{2} \pmod{2}$.
\begin{enumerate}
\item[(i)]
If $\ell \equiv 2 \pmod{8}$, then $\lambda$ is odd (resp. even) if and only if $q \equiv 7 \pmod{8}$ (resp. $q \equiv 3 \pmod{8}$). 
In particular, $\lambda$ is odd if and only if $2$ is a square in $\ff_q^{\times}$.
\item[(ii)]
If $\ell \equiv 6 \pmod{8}$, then $\lambda$ is even, $k \equiv 1 \pmod{4}$, $q \equiv 7 \pmod{8}$ and $2$ is a square in $\ff_q^{\times}$. Moreover, $(0, j)$ is odd for some even integer $j$ with $0 < j \leq \ell-1$.
\end{enumerate}
\end{lem}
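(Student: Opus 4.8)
The plan is to reduce the whole statement to two short ingredients: the parity of the first-column entries of $A$, and a congruence for $q$ modulo $8$. First I would extract the standing facts from Theorem~\ref{thm: Theorem I, II, III in Leh3}: since $K$ is a difference set, $k$ is odd, $\ell$ is even, and $(i,0) = \lambda = \frac{k-1}{\ell}$ for every $i$, so that $k-1 = \ell\lambda$. Because $\ell \equiv 2 \pmod 4$, its half $q' = \ell/2$ is odd (and indeed $q' = \ell/2$ by \eqref{eq: value of q'}, as $k$ is odd). Writing $\frac{k-1}{2} = q'\lambda$ and using that $q'$ is odd gives $\frac{k-1}{2} \equiv \lambda \pmod 2$, which is the opening assertion.

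Next I would establish the key congruence. Since $q = \ell k + 1 = \ell^2\lambda + \ell + 1$ and $\ell \equiv 2 \pmod 4$ forces $\ell^2 \equiv 4 \pmod 8$, I get $q \equiv 4\lambda + \ell + 1 \pmod 8$. The second ingredient is a parity test: because $(0,0) = \lambda$, Lemma~\ref{lem: Lemma I in Leh3} shows $\lambda$ is odd $\iff 2 \in K$, and since $K$ is contained in the group of squares, $\lambda$ odd forces $2$ to be a square, hence $q \equiv \pm 1 \pmod 8$ (using the recalled fact that $2$ is a square in $\ff_q$ exactly when $q \equiv \pm 1 \pmod 8$).

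For part~(i), $\ell \equiv 2 \pmod 8$ turns the key congruence into $q \equiv 4\lambda + 3 \pmod 8$, so $\lambda$ even yields $q \equiv 3$ and $\lambda$ odd yields $q \equiv 7$; as these are the only two residues and only $q \equiv 7 \equiv -1$ makes $2$ a square, both the stated equivalence and the ``in particular'' clause follow at once. For part~(ii), $\ell \equiv 6 \pmod 8$ gives $q \equiv 4\lambda + 7 \pmod 8$; if $\lambda$ were odd then $q \equiv 3 \pmod 8$, contradicting that $\lambda$ odd forces $q \equiv \pm 1 \pmod 8$. Hence $\lambda$ is even, whence $k \equiv 1 \pmod 4$ by the opening assertion, $q \equiv 7 \pmod 8$, and $2$ is a square. (This contradiction is precisely the contrapositive of Lemma~\ref{lem: difference set with odd lambda}, which could instead be cited directly, since $\ell \equiv 6 \not\equiv 0, 2 \pmod 8$.)

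Finally, for the last clause of~(ii) I would single out the unique index $j_0 \in \{0, \ldots, \ell-1\}$ with $2 \in g^{j_0}K$; by Lemma~\ref{lem: Lemma I in Leh3} this is the unique $j$ for which $(0,j)$ is odd. Since $(0,0) = \lambda$ is even, $j_0 \neq 0$; and since every element of $g^j K$ has exponent congruent to $j \pmod 2$ (as $\ell$ is even), the fact that $2$ is a square forces $j_0$ to be even. Thus $(0, j_0)$ is odd with $j_0$ even and $0 < j_0 \leq \ell-1$. I do not expect a serious obstacle: the argument is essentially bookkeeping, and the step most prone to error is the translation between ``$2$ is a square'' and ``the odd-column index $j_0$ is even,'' together with verifying $\ell^2 \equiv 4 \pmod 8$; the genuine content of~(ii), namely $\lambda$ even, is already forced by the mod-$8$ congruence via the earlier lemma.
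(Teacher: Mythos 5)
Your proof is correct and follows essentially the same route as the paper: the congruence $q \equiv 4\lambda + \ell + 1 \pmod 8$, the parity link $\lambda \equiv \frac{k-1}{2} \pmod 2$, and Lemma~\ref{lem: Lemma I in Leh3} to locate the odd cyclotomic number $(0,j)$ at an even index. The only cosmetic difference is that in part~(ii) you re-derive the exclusion of odd $\lambda$ (via ``$\lambda$ odd $\Rightarrow 2 \in K \Rightarrow q \equiv \pm 1 \pmod 8$'') instead of citing Lemma~\ref{lem: difference set with odd lambda} directly, as the paper does and as you yourself note is possible.
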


\begin{proof}
First of all, $k = \lambda \ell + 1$ and $q = \ell k + 1 = \lambda \ell^2 + \ell + 1$. When $\ell \equiv 2 \pmod{4}$, one has $k \equiv 2 \lambda + 1 \pmod{4}$ and $q \equiv 4 \lambda + \ell + 1 \pmod{8}$. 
In particular, this implies that $\frac{k-1}{2} \equiv \lambda \pmod{2}$. Hence, if $\lambda$ is odd (resp. even), then $q \equiv \ell + 5 \pmod{8}$ (resp. $q \equiv \ell + 1 \pmod{8}$), and part~(i) follows. 

For the case $\ell \equiv 6 \pmod{8}$, we know from Lemma~\ref{lem: difference set with odd lambda} that $\lambda$ must be even. In this case, we have $q \equiv \ell + 1 \equiv 7 \pmod{8}$. Thus, $2 \not\in K$, but $2$ is a square in $\ff_q^{\times}$. Write $2 \in g^j K$ for $0 < j \leq \ell-1$, and $(0, j)$ is odd by Lemma~\ref{lem: Lemma I in Leh3}. Since all elements of $K$ are squares, $j$ must be even. 
\end{proof}

It is somewhat unexpected that the minimality of the sum of squares of cyclotomic numbers $(i, 0)$ for $i = 0, 1, \ldots, \ell-1$ characterizes when $K$ forms a difference set, as shown below.

\begin{lem}
\label{lem: sum of squares of (i,0)}
$$\sum_{i=0}^{\ell-1} (i, 0)^2 \geq \frac{\left(k-1\right)^2}{\ell}.$$ Equality holds if and only if $K$ is a difference set of $\ff_q$.
\end{lem}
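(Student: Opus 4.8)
The plan is to recognize this statement as a Cauchy--Schwarz estimate whose equality case is governed by Lehmer's characterization of difference sets. First I would record the single linear constraint satisfied by the entries of the first column of the cyclotomic matrix. Applying part~(iv) of Lemma~\ref{lem: basic properties for (i,j)} with $j=0$ gives
\[
\sum_{i=0}^{\ell-1} (i,0) = k-1,
\]
and this is the only input about the numbers $(i,0)$ that the argument will require.

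Next I would apply the Cauchy--Schwarz inequality (equivalently, the quadratic-mean versus arithmetic-mean inequality) to the vector $\big((0,0),(1,0),\ldots,(\ell-1,0)\big)$ paired against the all-ones vector of length $\ell$:
\[
\sum_{i=0}^{\ell-1} (i,0)^2 \;\geq\; \frac{1}{\ell}\Big(\sum_{i=0}^{\ell-1} (i,0)\Big)^2 \;=\; \frac{(k-1)^2}{\ell},
\]
which is exactly the claimed lower bound. The standard equality case of Cauchy--Schwarz then tells us that equality holds precisely when all the entries $(i,0)$ coincide, that is, when $(0,0)=(1,0)=\cdots=(\ell-1,0)$.

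Finally, I would invoke Theorem~\ref{thm: Theorem I, II, III in Leh3}, which asserts that $K$ is a difference set of $\ff_q$ if and only if $(0,0)=(1,0)=\cdots=(\ell-1,0)$. Matching this constancy condition with the equality case extracted in the previous step immediately shows that equality in the displayed inequality is equivalent to $K$ being a difference set, completing the proof.

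I do not expect a genuine obstacle here: the whole statement reduces to a one-line Cauchy--Schwarz bound together with its equality condition, and the substantive content---that constancy of the first column characterizes difference sets---has already been established in Theorem~\ref{thm: Theorem I, II, III in Leh3}. The only point needing mild care is aligning the equality case of Cauchy--Schwarz (all coordinates equal) with the constancy condition of Lehmer's theorem, but this is transparent once the constraint $\sum_{i}(i,0)=k-1$ is in hand.
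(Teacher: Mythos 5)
Your proposal is correct and follows essentially the same route as the paper: Cauchy--Schwarz against the all-ones vector (using $\sum_{i}(i,0)=k-1$ from Lemma~\ref{lem: basic properties for (i,j)}(iv)), with the equality case (all $(i,0)$ equal) matched to Lehmer's characterization in Theorem~\ref{thm: Theorem I, II, III in Leh3}. Nothing is missing; your write-up just makes the sum identity explicit where the paper leaves it implicit.
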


\begin{proof}
By the Cauchy-Schwarz inequality, one has $$\sum_{i=0}^{\ell-1} (i, 0)^2 \cdot \sum_{i=0}^{\ell-1} 1^2 \geq \left( \sum_{i=0}^{\ell-1} (i, 0) \cdot 1 \right)^2 = (k-1)^2.$$ Equality holds if and only if all $(i, 0)$ are equal. The result then follows from Theorem~\ref{thm: Theorem I, II, III in Leh3}.
\end{proof}

In contrast to Lehmer's original observation, the cyclotomic numbers $(0, j), (1, j), \ldots, (\ell-1, j)$ for $j \neq 0$ can also indicate whether $K$ forms a difference set, through a similar sum-of-squares perspective. Here, $q'$ is defined as in Section~\ref{sec2}.

\begin{lem}
\label{lem: sum of squares of (i,j)}
Let $k$ be odd. For $1 \leq j \leq \ell-1$, we have $$\sum_{i=0}^{\ell-1} (i, j)^2 \leq \sum_{i=0}^{\ell-1} (i, q')^2.$$ Equality holds for some $j$ with $\gcd(j, \ell) = 1$ if and only if $K$ is a difference set. 
\end{lem}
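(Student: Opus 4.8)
The plan is to reduce the stated inequality to a cyclic autocorrelation estimate for the first column of the cyclotomic matrix, using the column identities already established in Proposition~\ref{prop: dot product of columns}. Write $c_w = (w, 0)$ for $0 \le w \le \ell-1$, extended periodically modulo $\ell$. Since $k$ is odd, Proposition~\ref{prop: dot product of columns} gives $\sum_{i=0}^{\ell-1}(i,q')^2 = k + \sum_{w=0}^{\ell-1} c_w^2$, while for any $1 \le j \le \ell-1$ it gives $\sum_{i=0}^{\ell-1}(i,j)^2 = k + \sum_{w=0}^{\ell-1} c_w\, c_{w-j}$. Subtracting the common constant $k$, the assertion $\sum_i (i,j)^2 \le \sum_i (i,q')^2$ becomes equivalent to the single inequality $\sum_{w} c_w c_{w-j} \le \sum_w c_w^2$.

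First I would prove this autocorrelation bound. Because the index runs over $\zz/\ell\zz$, reindexing gives $\sum_w c_{w-j}^2 = \sum_w c_w^2$, so I can complete the square:
\[
\sum_{w=0}^{\ell-1} c_w^2 - \sum_{w=0}^{\ell-1} c_w c_{w-j} = \tfrac12 \sum_{w=0}^{\ell-1} (c_w - c_{w-j})^2 \ge 0 .
\]
This establishes the inequality and, crucially, pins down the equality case: equality at a given shift $j$ holds if and only if $c_w = c_{w-j}$ for every $w$, that is, the first column is invariant under the cyclic shift by $j$.

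For the equality statement I would argue both directions. If $K$ is a difference set, then Theorem~\ref{thm: Theorem I, II, III in Leh3} forces $(0,0) = (1,0) = \cdots = (\ell-1,0)$, so $c$ is constant; then $c_w = c_{w-j}$ holds for every $j$, and in particular equality holds at $j = 1$, which satisfies $\gcd(1, \ell) = 1$. Conversely, suppose equality holds for some $j$ with $\gcd(j, \ell) = 1$. By the equality analysis above, $c_w = c_{w-j}$ for all $w$; iterating, $c$ is constant along the orbit of the shift $w \mapsto w - j$, and since $\gcd(j,\ell)=1$ this shift generates $\zz/\ell\zz$, forcing all entries of the first column to coincide. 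Applying Theorem~\ref{thm: Theorem I, II, III in Leh3} again shows $K$ is a difference set.

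The computation itself is routine once Proposition~\ref{prop: dot product of columns} is in hand, so I do not expect a serious obstacle; the only point demanding care is the equality analysis. One must use the completed-square form (equivalently, nonnegativity of the cyclotomic numbers) to rule out the spurious Cauchy--Schwarz alternative $c_{w-j} = -c_w$, and must invoke $\gcd(j,\ell) = 1$ precisely to upgrade shift-invariance into full constancy of the column, which is exactly what connects the equality condition back to Lehmer's characterization.
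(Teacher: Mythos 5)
Your proof is correct and follows essentially the same route as the paper's: both use Proposition~\ref{prop: dot product of columns} to reduce the statement to the autocorrelation inequality $\sum_{w}(w,0)(w-j,0)\leq \sum_{w}(w,0)^2$ for the first column, and both close the equivalence with Theorem~\ref{thm: Theorem I, II, III in Leh3} together with the observation that $\gcd(j,\ell)=1$ upgrades shift-invariance of that column to full constancy. The one divergence is the tool used for the autocorrelation bound: the paper invokes the Cauchy--Schwarz inequality, whereas you complete the square, writing the difference as $\tfrac12\sum_{w}\left((w,0)-(w-j,0)\right)^2\geq 0$. Your variant is marginally cleaner at the equality step, since it yields $(w,0)=(w-j,0)$ directly, while the paper must first extract a proportionality constant $c$ from the Cauchy--Schwarz equality case and then argue $c=1$ by comparing column sums; both arguments are equally valid.
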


\begin{proof}
According to Proposition~\ref{prop: dot product of columns}, $$\sum_{i=0}^{\ell-1} (i, j)^2 = k + \sum_{i=0}^{\ell-1} (i, 0) (i-j, 0),$$ and when $k$ is odd, $$\sum_{i=0}^{\ell-1} (i, q')^2 = k + \sum_{i=0}^{\ell-1} (i, 0)^2.$$ The inequality then follows from the Cauchy-Schwarz inequality that $$\left( \sum_{i=0}^{\ell-1} (i, 0) (i-j, 0) \right)^2 \leq \sum_{i=0}^{\ell-1} (i, 0)^2 \cdot \sum_{i=0}^{\ell-1} (i-j, 0)^2 = \left( \sum_{i=0}^{\ell-1} (i, 0)^2 \right)^2.$$ If $K$ is a difference set, then $k$ is odd and all $(i, 0)$ are equal by Theorem~\ref{thm: Theorem I, II, III in Leh3} (Lehmer). Thus, equality holds.

Conversely, suppose equality holds for some $j$ with $\gcd(j, \ell)=1$. Then, equality in the Cauchy-Schwarz inequality implies the existence of a constant $c$ such that $(i, 0) = c (i-j, 0)$ for each $i = 0, 1, \ldots, \ell-1$. Comparing the total sums of $(i, 0)$ and $(i-j, 0)$, we obtain $c=1$. Hence, $(i, 0) = (i-j, 0)$ for all $i$. Replacing $i$ by $i+j$ repeatedly yields $(0, 0) = (j, 0) = (2 j, 0) = \cdots = (t j, 0) = \cdots$ for all $t \in \nn$. Since $\gcd(j, \ell) = 1$, the sequence $t j$ modulo $\ell$ cycles through all residue classes modulo $\ell$. Therefore, all $(i, 0)$ are equal, and $K$ is a difference set by Theorem~\ref{thm: Theorem I, II, III in Leh3} (Lehmer).
\end{proof}

\subsection{Cyclotomic matrix to a power difference set}
\label{subsec: Cyclotomic matrix to a power difference set}

\mbox{}

The cyclotomic matrix naturally encodes the information of cyclotomic numbers. However, its matrix-theoretic properties have rarely been studied. We are going to utilize the perspective of matrix theory to study power difference sets. 

Recall the matrices $A_v$ and $E_{i,j}$ defined in Section~\ref{sec3}, and $A = A_0$. 

\begin{thm}
\label{thm: A^T A for a difference set K}
Let $K$ be the subgroup of all $\ell$-th powers in $\ff_q^{\times}$, where $\ell \mid q-1$, and let $A$ be the cyclotomic matrix with respect to $\ell$. Then $K$ is a difference set of $\ff_q$ if and only if $|K|$ is odd and the main diagonal entries of $A^T A$ has the form $(a, b, \ldots, b)$ for some $a, b \in \nn$. 

In this case, $$A^T A = \lambda k J_{\ell} + (k-\lambda) I_{\ell} - k E_{0,0},$$ where $\lambda = \frac{k-1}{\ell}$ and $J_{\ell}$ is the $\ell \times \ell$ all-one matrix.
\end{thm}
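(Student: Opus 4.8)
The plan is to read everything off from the column inner products of $A$, using Proposition~\ref{prop: dot product of columns} as the main engine together with Lehmer's characterization (Theorem~\ref{thm: Theorem I, II, III in Leh3}): $K$ is a difference set precisely when the first-column entries $(0,0),(1,0),\dots,(\ell-1,0)$ are all equal, in which case $k$ is odd and their common value is $\lambda=\frac{k-1}{\ell}$. Throughout I would write $(A^{T}A)_{ij}=\sum_{w=0}^{\ell-1}(w,i)(w,j)$, so that the diagonal entries are exactly the squared column norms $\sum_{w}(w,j)^{2}$.

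For the forward implication I would assume $K$ is a difference set, so by Theorem~\ref{thm: Theorem I, II, III in Leh3} we have $k$ odd and $(w,0)=\lambda$ for every $w$. Substituting this constant first column into Proposition~\ref{prop: dot product of columns} gives $(A^{T}A)_{00}=\sum_{w}(w,0)^{2}=\ell\lambda^{2}$ and, for $1\le j\le\ell-1$, $(A^{T}A)_{jj}=k+\sum_{w}(w,0)(w-j,0)=k+\ell\lambda^{2}$. Since the latter is independent of $j$, the diagonal has the required shape $(a,b,\dots,b)$ with $a=\ell\lambda^{2}$ and $b=k+\ell\lambda^{2}$, establishing one direction of the equivalence.

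For the converse I would exploit Lemma~\ref{lem: sum of squares of (i,j)} as a shortcut. Under the hypothesis $k$ is odd, so $q'=\ell/2\neq 0$, and hence both the $(1,1)$-entry and the $(q',q')$-entry of $A^{T}A$ lie among the repeated value $b$; that is, $\sum_{i}(i,1)^{2}=b=\sum_{i}(i,q')^{2}$. Because $\gcd(1,\ell)=1$, the equality case of Lemma~\ref{lem: sum of squares of (i,j)} immediately forces $K$ to be a difference set. If one prefers to avoid quoting that lemma, the same conclusion follows directly: combining the diagonal condition with Proposition~\ref{prop: dot product of columns} and the relation $\sum_{w}(w,q')^{2}=k+\sum_{w}(w,0)^{2}$ yields $\sum_{w}(w,0)(w-j,0)=\sum_{w}(w,0)^{2}$ for all $j$, and summing this over $j$ while using $\sum_{w}(w,0)=k-1$ turns the Cauchy-Schwarz inequality into an equality, forcing all $(w,0)$ to coincide.

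Finally, for the explicit form of $A^{T}A$ in the difference-set case I would compute the off-diagonal entries: for $i\neq j$ the second identity of Proposition~\ref{prop: dot product of columns} gives $(A^{T}A)_{ij}=\sum_{w}(w,0)(w-j,i-j)=\lambda\sum_{w}(w,i-j)=\lambda k$, where the last step uses part~(iv) of Lemma~\ref{lem: basic properties for (i,j)} together with $i-j\not\equiv 0\pmod{\ell}$. Collecting this with the diagonal values $\ell\lambda^{2}=\lambda(k-1)$ and $k+\ell\lambda^{2}$ (and the relation $\ell\lambda=k-1$) matches, entry by entry, the claimed matrix $\lambda k\,J_{\ell}+(k-\lambda)I_{\ell}-k\,E_{0,0}$. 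I expect the only delicate point to be the bookkeeping in the converse, namely recognizing that the single diagonal coincidence at the index $q'$ is exactly what converts the hypothesis into the equality hypothesis of Lemma~\ref{lem: sum of squares of (i,j)} (equivalently, into constant cyclic autocorrelation of the first column); every other step is a direct substitution into Proposition~\ref{prop: dot product of columns}.
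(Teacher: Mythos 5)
Your proof is correct and takes essentially the same route as the paper: the converse is verbatim the paper's argument (Lemma~\ref{lem: sum of squares of (i,j)} applied at $j=1$, which is legitimate since $k$ odd gives $q'=\ell/2\neq 0$ so the $(1,1)$- and $(q',q')$-entries both equal $b$), and your forward direction assembles $A^TA$ entry-by-entry from Proposition~\ref{prop: dot product of columns}, which is just the entrywise form of the identity~\eqref{eq: product of cyclotomic matrices 2} that the paper applies directly at the matrix level via $\sum_{w}A_w = kJ_\ell - I_\ell$. The computations agree term for term, so the difference is purely presentational.
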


\begin{proof}
Suppose that $K$ is a $(q, k, \lambda)$-difference set. According to~\eqref{eq: product of cyclotomic matrices 2}, one has $$A^T A = k (I_{\ell} - E_{0,0}) + \sum_{w=0}^{\ell-1} (q', w) A_w.$$ Note that $(q', w) \overset{*}{=} (w + q', 0) = \lambda$. Moreover, the $(i, j)$-entry of $\sum_{w=0}^{\ell-1} A_w$ is $$\sum_{w=0}^{\ell-1} (i-w, j-w) \overset{**}{=} \sum_{w=0}^{\ell-1} (w-i, j-i) = \left\{ \begin{array}{cl} k-1 & \text{if $i=j$,} \\ k & \text{otherwise,} \end{array} \right.$$ by Lemma~\ref{lem: basic properties for (i,j)}. Thus, $\sum_{w=0}^{\ell-1} A_w = k J_{\ell} - I_{\ell}$, and the claimed expression for $A^T A$ follows. 

In general, the $j$-th diagonal entry of $A^T A$ is given by $\sum_{i=0}^{\ell-1} (i, j)^2$. If these entries take the form $(a, b, \ldots, b)$ and $k$ is odd, then Lemma~\ref{lem: sum of squares of (i,j)} applied to $j=1$ implies that $K$ must be a difference set. 
\end{proof}

The result above is independent of the choice of generator for $\ff_q^{\times}$. The condition that $|K|$ is odd is indeed necessary. For instance, when $(q, \ell) = (9, 4)$ so that $|K| = 2$, the subgroup $K$ is not a difference set, yet the main diagonal of $A^T A$ is $(1, 2, 2, 2)$.

Let us know consider a submatrix of the cyclotomic matrix. Define $B$ to be the $(q',0)$-minor of $A$, i.e., the matrix obtained by removing the $q'$-th row and the first column (indexed by $0$) from $A$. Then the entries of $B$ are $(i, j)$ for $0 \leq i \neq q' \leq \ell-1$ and $1 \leq j \leq \ell-1$. In particular, the $(i, j)$-entry of $B^T B$, for $1 \leq i, j \neq q' \leq \ell-1$, is given by $$\sum_{\substack{t=0, \\ t\neq q'}}^{\ell-1} (t, i) (t, j) = \sum_{t=0}^{\ell-1} (t, i) (t, j) - (q', i) (q', j),$$ which is precisely the $(i, j)$-entry of $A^T A$ minus $\lambda^2$. This yields the following corollary.

\begin{cor}
\label{cor: A^T A for a difference set K}
If $K$ is a $(q, k, \lambda)$-difference set, then \begin{align*} B^T B = (k - \lambda) (\lambda J_{\ell-1} + I_{\ell-1}). \end{align*} 
\end{cor}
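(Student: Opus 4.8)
The plan is to compute $B^T B$ entrywise and reduce everything to the closed form for $A^T A$ already established in Theorem~\ref{thm: A^T A for a difference set K}. Since $B$ is the $(q',0)$-minor of $A$, its columns are indexed by $j \in \{1, \ldots, \ell-1\}$ and its rows by $t \in \{0, \ldots, \ell-1\} \setminus \{q'\}$, so $B^T B$ is an $(\ell-1) \times (\ell-1)$ matrix whose $(i,j)$-entry is the inner product of the $i$-th and $j$-th surviving columns. As recorded just before the statement, this inner product equals $\sum_{t=0}^{\ell-1}(t,i)(t,j) - (q',i)(q',j)$, i.e.\ the $(i,j)$-entry of $A^T A$ with the single contribution of the deleted row $q'$ removed.

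First I would read off the $(i,j)$-entry of $A^T A$ for $i, j \geq 1$ from $A^T A = \lambda k J_{\ell} + (k-\lambda) I_{\ell} - k E_{0,0}$. The crucial bookkeeping point is that deleting column $0$ removes precisely the support of the correction term $E_{0,0}$, so that term contributes nothing to any surviving entry. Hence for $1 \leq i, j \leq \ell-1$ the $(i,j)$-entry of $A^T A$ collapses to the clean value $\lambda k + (k-\lambda)\delta_{ij}$.

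Next I would evaluate the correction $(q',i)(q',j)$ and show it is the constant $\lambda^2$. By property~(i) of Lemma~\ref{lem: basic properties for (i,j)} one has $(q',i) = (i+q', 2q')$, and since $k$ is odd we have $q' = \frac{\ell}{2}$, so $2q' \equiv 0 \pmod{\ell}$ and thus $(q',i) = (i+q', 0)$. Because $K$ is a difference set, Theorem~\ref{thm: Theorem I, II, III in Leh3} gives $(i+q',0) = \lambda$ for every $i$; therefore $(q',i) = \lambda$ uniformly, and $(q',i)(q',j) = \lambda^2$ regardless of $i$ and $j$.

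Subtracting then yields $\lambda k + (k-\lambda)\delta_{ij} - \lambda^2 = (k-\lambda)(\lambda + \delta_{ij})$, which is exactly the $(i,j)$-entry of $(k-\lambda)(\lambda J_{\ell-1} + I_{\ell-1})$, establishing the identity. I do not expect a genuine obstacle here, as the content is entirely inherited from Theorem~\ref{thm: A^T A for a difference set K} together with the constancy $(q',i) = \lambda$; the only points demanding care are confirming that the removal of column $0$ exactly eliminates the $E_{0,0}$ term (so the surviving block has no exceptional entries) and that the removal of row $q'$ subtracts the same value $\lambda^2$ from every entry, including the diagonal entry indexed by $q'$ itself.
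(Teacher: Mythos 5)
Your proposal is correct and follows essentially the same route as the paper: the paper also computes the $(i,j)$-entry of $B^T B$ as the corresponding entry of $A^T A$ minus the deleted-row contribution $(q',i)(q',j) = \lambda^2$, then reads off the result from Theorem~\ref{thm: A^T A for a difference set K}. Your extra bookkeeping---that removing column $0$ exactly kills the $E_{0,0}$ term, and that $(q',i) = (i+q',0) = \lambda$ via Lemma~\ref{lem: basic properties for (i,j)}(i) and Theorem~\ref{thm: Theorem I, II, III in Leh3}---is precisely what the paper leaves implicit, so the argument is sound.
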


\begin{exmp}
\label{exmp: B^TB to ell=2}
Consider the case $\ell = 2$. By Paley's result (works also for a general finite field), $K$ is a difference set if and only if $q \equiv 3 \pmod{4}$ (Paley). In this situation, it follows from Lemma~\ref{lem: basic properties for (i,j)} that $(0, 0) = (1, 0) = (1, 1) = \frac{k-1}{2} = \lambda$, and $(0,1) = \frac{k+1}{2}$. Hence, $B$ is the $1 \times 1$ matrix $B = [(0,1)]$. Moreover, $(0,1)^2 = (\lambda + 1)^2 = (k - \lambda) (\lambda + 1)$. 
\end{exmp}

\begin{exmp}
\label{exmp: B^TB to ell=4 and 8}
Consider $q = p$, an odd prime.

(I) 
Suppose $\ell = 4$. According to Chowla's result, $K$ is a difference set when $p = 1 + 4 t^2$ for odd $t > 1$. Such primes includes $37, 101, 197, 677, 2917, 4357, 5477, \ldots$. Below is the matrix $B$ for the first three primes. The primitive root is chosen as $2$ in each case so that the reader may verify $B^T B$: $$\begin{array}{|c|c|c|c|} \hline p & 37 & 101 & 197 \\ \hline k & 9 & 25 & 49 \\ \hline \lambda & 2 & 6 & 12 \\ \hline A & \left[\begin{matrix} 2 & 1 & 2 & 4 \\ 2 & 2 & 4 & 1 \\ 2 & 2 & 2 & 2 \\ 2 & 4 & 1 & 2 \end{matrix}\right] & \left[\begin{matrix} 6 & 9 & 6 & 4 \\ 6 & 6 & 4 & 9 \\ 6 & 6 & 6 & 6 \\ 6 & 4 & 9 & 6 \end{matrix}\right] & \left[\begin{matrix} 12 & 9 & 12 & 16 \\ 12 & 12 & 16 & 9 \\ 12 & 12 & 12 & 12 \\ 12 & 16 & 9 & 12 \end{matrix}\right] \\ \hline B & \left[\begin{matrix} 1 & 2 & 4 \\ 2 & 4 & 1 \\ 4 & 1 & 2 \end{matrix}\right] & \left[\begin{matrix} 9 & 6 & 4 \\ 6 & 4 & 9 \\ 4 & 9 & 6 \end{matrix}\right] & \left[\begin{matrix} 9 & 12 & 16 \\ 12 & 16 & 9 \\ 16 & 9 & 12 \end{matrix}\right] \\ \hline 
\end{array}$$

In \cite[Section~6]{Leh3}, Lehmer provided a converse to Chowla's result using Gauss's formulas for the cyclotomic numbers $(0, 0)$ and $(1, 0)$, showing that $|K|$ must be an odd perfect square if $K$ is a difference set. Here, we offer an alternative proof. Suppose that $K$ is a $(p, k, \lambda)$-difference set. According to Lemma~\ref{lem: basic properties for (i,j)} and the assumption $(0,0) = (1,0) = \lambda$, one obtains $$A = \left[\begin{matrix} \lambda & a & \lambda & b \\ \lambda & \lambda & b & a \\ \lambda & \lambda & \lambda & \lambda \\ \lambda & b & a & \lambda \end{matrix}\right] \quad \text{and} \quad B = \left[\begin{matrix} a & \lambda & b \\ \lambda & b & a \\ b & a & \lambda \end{matrix}\right],$$ where $2 \lambda + a + b = k = 4 \lambda + 1$. By Corollary~\ref{cor: A^T A for a difference set K}, we have $\lambda^2 + a^2 + b^2 = (k - \lambda) (\lambda + 1)$. Then $2 a b = (a+b)^2 - (a^2 + b^2) = 2 \lambda^2$. Thus, $a$ and $b$ are two integer solutions  of the quadratic equation $x^2 - (2 \lambda + 1) x + \lambda^2 = 0$, whose discriminant is $(2 \lambda + 1)^2 - 4 \lambda^2 = k$, showing that $k$ must be a perfect square. This recovers Lehmer's conclusion via matrix analysis. 


(II) 
Suppose $\ell = 8$. By \cite[Theorem~VII]{Leh3}, $K$ is a difference set if and only if $p = 1 + 8 t^2 = 9 + 64 s^2$ for some odd integers $t, s$. The smallest such prime is $p=73$, and with primitive root $5$, the corresponding matrices $A$ and $B$ are $$A = \left[\begin{matrix} 1 & 2 & 0 & 0 & 2 & 2 & 2 & 0 \\ 1 & 1 & 0 & 1 & 2 & 0 & 1 & 3 \\ 1 & 1 & 1 & 3 & 2 & 1 & 0 & 0 \\ 1 & 1 & 1 & 1 & 0 & 3 & 0 & 2 \\ 1 & 1 & 1 & 1 & 1 & 1 & 1 & 1 \\ 1 & 0 & 3 & 0 & 2 & 1 & 1 & 1 \\ 1 & 3 & 2 & 1 & 0 & 0 & 1 & 1 \\ 1 & 0 & 1 & 2 & 0 & 1 & 3 & 1 \end{matrix} \right] \quad \text{and} \quad B = \left[\begin{matrix} 2 & 0 & 0 & 2 & 2 & 2 & 0 \\ 1 & 0 & 1 & 2 & 0 & 1 & 3 \\ 1 & 1 & 3 & 2 & 1 & 0 & 0 \\ 1 & 1 & 1 & 0 & 3 & 0 & 2 \\ 0 & 3 & 0 & 2 & 1 & 1 & 1 \\ 3 & 2 & 1 & 0 & 0 & 1 & 1 \\ 0 & 1 & 2 & 0 & 1 & 3 & 1 \end{matrix} \right].$$ 
The next such prime is $p=104411704393$, which is too large to include the corresponding matrix $B$ here.
\end{exmp}

\begin{remk}
\label{remk: B^T B for a difference set K}
Corollary~\ref{cor: A^T A for a difference set K} plays a central role in the subsequent development, offering structural insights into $K$ when it is a difference set. This naturally raises the question of whether the converse holds when $|K|$ is odd---analogous to Theorem~\ref{thm: A^T A for a difference set K}---or whether an equivalent characterization can be formulated.
\end{remk}





Recall a classical result below.

\begin{thm}[{\cite[Theorem~1]{Schoenberg}}]
\label{thm: Schoenberg}
A regular $n$-simplex can be inscribed in the $\rr^n$ with integer coordinates if and only if $n$ is one of the following cases:
\begin{enumerate}
\item
$n$ is even and $n+1$ is a perfect square;
\item
$n \equiv 3 \pmod{4}$;
\item
$n \equiv 1 \pmod{4}$ and $n+1$ is a sum of two odd squares.
\end{enumerate}
\end{thm}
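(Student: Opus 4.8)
This is a classical theorem of Schoenberg, so the plan is to reconstruct a proof from the arithmetic theory of quadratic forms, treating the two directions separately. \emph{Reduction.} Place the $n+1$ vertices at points $P_0, \ldots, P_n \in \zz^n$ and set $u_i = P_i - P_0$ for $1 \le i \le n$. A regular simplex of squared edge length $N$ corresponds to $|u_i|^2 = N$ and $u_i \cdot u_j = N/2$ for $i \ne j$, since the angle between two edges at a common vertex of a regular simplex has cosine $1/2$. Integrality of the off-diagonal inner products forces $N$ to be even, say $N = 2h$, so the Gram matrix is $G_h := h(I_n + J_n)$. Writing $M = [\,u_1 \mid \cdots \mid u_n\,]$, inscribing the simplex is equivalent to solving $M^{T} M = G_h$ in integer matrices $M$ for some positive integer $h$. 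Thus the theorem becomes a question about which $n$ allow the positive-definite form $G_h$ to be the Gram matrix of a full-rank sublattice of $\zz^n$.

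\emph{Necessity via the determinant.} Taking determinants gives $(\det M)^2 = \det G_h = h^n (n+1)$, so $h^n(n+1)$ must be a perfect square. If $n$ is even then $h^n$ is already a square and one needs $n+1$ square, which is case~(1); no odd $n$ can occur here. If $n$ is odd the determinant imposes no restriction (take $h = n+1$), so the dichotomy between $n \equiv 3$ and $n \equiv 1 \pmod 4$ must come from a congruence obstruction rather than the determinant.

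\emph{Necessity via the local obstruction.} Diagonalize $I_n + J_n$ rationally: it acts as $n+1$ on the all-ones line and as the identity on its orthogonal complement, so $G_h \cong_{\qq} \langle h(n+1), h, \ldots, h\rangle$ with $n-1$ copies of $h$. An integral $M$ forces a rational one, hence $G_h$ must be $\qq$-equivalent to $I_n = \langle 1, \ldots, 1\rangle$; by Hasse--Minkowski this holds iff the forms share their discriminant modulo squares (the determinant condition) and all Hasse--Witt invariants at every place. The plan is to evaluate $\prod_{i<j}(a_i,a_j)_p$ for $(a_i) = (h(n+1), h, \ldots, h)$ at each prime $p$ and at $\infty$, reducing everything to Hilbert symbols in $h$ and $n+1$. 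The place $p = 2$ is decisive: for $n \equiv 3 \pmod 4$ the $2$-adic invariant always matches for a suitable $h$, whereas for $n \equiv 1 \pmod 4$ matching is possible exactly when $n+1$ is a sum of two odd squares (equivalently $n+1 \equiv 2 \pmod 8$ with the correct local square conditions). This $2$-adic computation is the arithmetic heart of the theorem and yields the stated trichotomy on the necessity side.

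\emph{Sufficiency and the main obstacle.} For the converse I would produce integer realizations of $G_h$ in each admissible case — an even case built from $n+1 = m^2$, a $\pm 1$-type configuration when $n \equiv 3 \pmod 4$, and a construction driven by a fixed representation $n+1 = a^2 + b^2$ with $a, b$ odd when $n \equiv 1 \pmod 4$. The main obstacle is twofold. First, the careful $2$-adic Hilbert-symbol bookkeeping that converts the abstract Hasse condition into the concrete ``sum of two odd squares'' statement. Second, and harder, upgrading rational solvability of $M^{T}M = G_h$ to integral solvability: a rational isometry $G_h \cong_{\qq} I_n$ need not descend to a lattice embedding, and reconciling the two requires either that $\zz^n$ is alone in its genus for the relevant $n$ or explicit constructions that bypass the genus problem entirely. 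I expect the cleanest route is to let the explicit constructions carry the sufficiency direction, so that the delicate local--global lifting is only invoked — in the contrapositive — for necessity.
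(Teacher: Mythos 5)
The paper itself does not prove this statement---it is quoted as Schoenberg's classical theorem, and the only ingredients the paper actually uses are the Gram-matrix reformulation and the remark, made immediately after the statement, that a rational simplex can be rescaled to an integral one. Your general framework (reduce to $M^T M = h(I_n + J_n)$ and decide rational equivalence by Hasse--Minkowski) is indeed the classical route, but your execution contains a fatal error: the rational diagonalization $G_h \cong_{\qq} \langle h(n+1), h, \ldots, h\rangle$ is false. A real orthogonal eigendecomposition does not yield a congruence over $\qq$: the $(I_n+J_n)$-orthogonal complement of the all-ones vector is the hyperplane $\sum_i x_i = 0$, and the standard form restricted to that hyperplane is the $A_{n-1}$ root form, whose discriminant is $n$ modulo squares, not the identity form. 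Concretely, $I_2 + J_2 \cong_{\qq} \langle 6, 2\rangle$, and $(6,2)_3 = \left(\tfrac{2}{3}\right) = -1$ while $(3,1)_3 = 1$, so $\langle 6,2\rangle \not\cong_{\qq} \langle 3,1\rangle$. The error is not harmless: starting from your diagonalization with $h = n+1$ and $n \equiv 1 \pmod 4$, Witt cancellation plus the four-square identity (which gives $c \cdot I_4 \cong_{\qq} I_4$ for every positive rational $c$) would show that \emph{every} $n \equiv 1 \pmod 4$ admits a rational regular $n$-simplex---contradicting the theorem already at $n = 5$, since $6$ is not a sum of two squares. The correct starting point is $h(I_n+J_n) \cong_{\qq} \langle h\,n(n+1)\rangle \perp h\,q_{A_{n-1}}$, and the entire local computation must be redone from there.

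Two further points. Your assertion that ``the place $p=2$ is decisive'' cannot be right: for $n \equiv 1 \pmod 4$, the condition that $n+1$ be a sum of two (necessarily odd) squares is, by Fermat, a condition on all primes $\equiv 3 \pmod 4$ dividing $n+1$; for instance $42 \equiv 2 \pmod 8$ passes every $2$-adic test yet is not a sum of two squares, so the odd places carry genuine content. Conversely, the step you single out as the ``harder'' obstacle---upgrading rational solvability to integral solvability---is trivial here precisely because $h$ is a free parameter: if $M$ is rational with $M^T M = h(I_n+J_n)$ and $d$ is a common denominator, then $dM$ is integral with $(dM)^T(dM) = d^2 h\,(I_n+J_n)$; no genus theory is needed, as the paper's own remark following the theorem makes explicit. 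In sum, the proposal spends its caution on a step that is immediate, while the genuinely hard part---the Hasse-invariant computation---is both deferred and set up on an incorrect reduction that would yield a false theorem if carried through.
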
 

The sequence of integer $n$ in the preceding theorem corresponds to \href{https://oeis.org/A096315}{A096315}~\cite{OEIS}.

Note that there is a regular $n$-simplex inscribed in $\zz^n$ if and only if there is a $n$-simplex inscribed in the $\qq^n$, since one may multiply all coordinates by a common denominator to obtain integer entries. Let $u_0, u_1, \ldots, u_n$ be column vectors of $\rr^n$ and let $V$ be the $n \times n$ matrix with columns $u_1 - u_0, \ldots, u_n - u_0$. Then $u_0, u_1, \ldots, u_n$ form the vertices of a regular $n$-simplex if and only if the Gram matrix $V^T V$ satisfies \begin{align} V^T V = r^2 \left( \begin{matrix} 1 & \frac{1}{2} & \cdots & \frac{1}{2} \\ \frac{1}{2} & 1 & \cdots & \frac{1}{2} \\ \vdots & \vdots & \ddots & \vdots \\ \frac{1}{2} & \frac{1}{2} & \cdots & 1 \end{matrix} \right) = \frac{r^2}{2} (J_n + I_n) \label{eq: Gram matrix from regular simplex} \end{align} where $r = ||u_i - u_0||$ is the edge length. Moreover, if all $u_i \in \qq^n$, then $\frac{r^2}{2} \in \qq$. 


\begin{thm}
\label{thm: if ell(k-lambda) is a square}
Suppose that $K$ is a $(q, k, \lambda)$-difference set. If $\ell (k - \lambda)$ {\rm(}$=q - k${\rm)} is a perfect square, then either $\ell \equiv 0 \pmod{4}$, or $\ell = a^2 + b^2$ for some odd integers $a$ and $b$.
\end{thm}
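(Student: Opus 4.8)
The plan is to read Corollary~\ref{cor: A^T A for a difference set K} geometrically. The relation $B^T B = (k-\lambda)(\lambda J_{\ell-1} + I_{\ell-1})$ says that the $\ell-1$ columns $b_1, \dots, b_{\ell-1} \in \zz^{\ell-1}$ of $B$ are equidistant: the diagonal gives $\|b_i\|^2 = (k-\lambda)(\lambda+1)$ and the off-diagonal gives $b_i \cdot b_j = (k-\lambda)\lambda$, so $\|b_i - b_j\|^2 = 2(k-\lambda)$ for all $i \ne j$. Thus the $b_i$ are the vertices of a regular $(\ell-2)$-simplex of squared edge length $2(k-\lambda)$, sitting one dimension short inside $\zz^{\ell-1}$; since $B^T B$ is positive definite, $B$ is invertible and these points are affinely independent. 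To invoke Schoenberg's Theorem~\ref{thm: Schoenberg} with $n = \ell-1$, I would manufacture one further rational vertex turning this into a full regular $(\ell-1)$-simplex.

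Setting $s = \sum_{i=1}^{\ell-1} b_i$, a short computation using $k = \lambda\ell + 1$ (so that $k-\lambda = (\ell-1)\lambda + 1$) gives $b_i \cdot s = (k-\lambda)^2$ for every $i$ and $\|s\|^2 = (\ell-1)(k-\lambda)^2$. Hence all $b_i$ lie on the hyperplane $\{x : x\cdot s = (k-\lambda)^2\}$, whose normal direction is the integer vector $s$, and their centroid is $c = s/(\ell-1)$. I would place the apex at $p = c + w$ with $w$ parallel to $s$; because $w \perp (b_i - c)$, one gets $\|p - b_i\|^2 = \|c - b_i\|^2 + \|w\|^2$, and forcing this to equal the common edge length $2(k-\lambda)$ pins down $\|w\|^2 = \ell(k-\lambda)/(\ell-1)$, i.e. $w = \frac{\sqrt{\ell(k-\lambda)}}{(\ell-1)(k-\lambda)}\, s$. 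The decisive point is that $s$ is integral while the scalar factor is rational exactly when $\sqrt{\ell(k-\lambda)}$ is rational; thus $p \in \qq^{\ell-1}$ precisely under the hypothesis that $\ell(k-\lambda) = q-k$ is a perfect square. With this apex, $p, b_1, \dots, b_{\ell-1}$ are $\ell$ equidistant rational points, so taking $u_0 = p$ and $u_i = b_i$ the difference matrix satisfies $V^T V = (k-\lambda)(J_{\ell-1} + I_{\ell-1})$, matching~\eqref{eq: Gram matrix from regular simplex} with $r^2 = 2(k-\lambda)$. Clearing denominators yields a regular $(\ell-1)$-simplex inscribed in $\zz^{\ell-1}$.

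Finally I would feed $n = \ell-1$ into Theorem~\ref{thm: Schoenberg}. Its case~(1) requires $n$ even, i.e. $\ell$ odd; but $K$ being a difference set forces $\ell$ even by Theorem~\ref{thm: Theorem I, II, III in Leh3}, so case~(1) is vacuous here. Case~(2) gives $\ell - 1 \equiv 3 \pmod 4$, that is $\ell \equiv 0 \pmod 4$, while case~(3) gives $\ell - 1 \equiv 1 \pmod 4$ together with $n+1 = \ell$ a sum of two odd squares. These are exactly the two alternatives in the statement.

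The crux is the apex construction in the second step: recognizing that the columns of $B$ already form a regular simplex of one dimension too few, identifying $s = \sum b_i$ as an integral normal to their affine hull, and computing the height so that rationality of the completing vertex is governed \emph{precisely} by the perfect-square condition on $q-k$. The supporting identities $b_i\cdot s = (k-\lambda)^2$ and $\|s\|^2 = (\ell-1)(k-\lambda)^2$, both of which collapse after substituting $k-\lambda = (\ell-1)\lambda + 1$, are where the arithmetic must be handled with care; everything else is routine bookkeeping or a direct appeal to the cited results.
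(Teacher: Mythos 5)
Your proposal is correct and takes essentially the same route as the paper: both complete the columns of $B$ (a regular $(\ell-2)$-simplex of squared edge $2(k-\lambda)$, by Corollary~\ref{cor: A^T A for a difference set K}) to a regular $(\ell-1)$-simplex whose extra vertex is rational exactly when $\ell(k-\lambda)=q-k$ is a perfect square, and then apply Theorem~\ref{thm: Schoenberg} with $n=\ell-1$ odd. In fact your apex is literally the paper's: since every row sum of $B$ equals $k-\lambda$, your vector $s=B\mathbf{1}_{\ell-1}=(k-\lambda)\mathbf{1}_{\ell-1}$, so $p=c+w$ is the point $z\mathbf{1}_{\ell-1}$ with $z=\frac{(k-\lambda)+\sqrt{\ell(k-\lambda)}}{\ell-1}$, which the paper finds by solving $(B^T-zJ_{\ell-1})(B-zJ_{\ell-1})=(k-\lambda)(J_{\ell-1}+I_{\ell-1})$.
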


\begin{proof}
Let $n = \ell-1$ and let $N = k - \lambda$. From Corollary~\ref{cor: A^T A for a difference set K}, we have$B^T B = N (\lambda J_n + I_n)$. Consider the matrix expression \begin{align*} (B^T - z J_n) (B - z J_n) = (n z^2 - 2 N z + N \lambda) J_n + N I_n. \end{align*} Set $z = \frac{N}{n} + \frac{\sqrt{(n+1) N}}{n}$. Under the assumption that $(n+1) N = \ell (k - \lambda)$ is a perfect square, this choice of $z$ lies in $\qq$. This yields $(B^T - z J_n) (B - z J_n) = N (J_n + I_n)$, and it is exactly the Gram matrix of a regular $n$-simplex inscribed in $\qq^2$, as in~\eqref{eq: Gram matrix from regular simplex} with $r = \sqrt{2 N}$. As $n = \ell - 1$ is odd, it follows from Theorem~\ref{thm: Schoenberg}(2) and (3) that either $\ell \equiv 0 \pmod{4}$, or $\ell \equiv 2 \pmod{4}$ and $\ell$ is a sum of two odd squares of integers. 
\end{proof}

The prime powers $q$ of the form $q = 8 a^2 - 1$ for some $a \in \zz$ correspond to the case $\ell=2$ in Theorem~\ref{thm: if ell(k-lambda) is a square}. Examples include the primes $p = 7, 31, 71, 127, 199, 647, \ldots.$, which form the sequence \href{https://oeis.org/A090684}{A090684} in~\cite{OEIS}. The first non-prime example is $q = 12167$, arising from $a = 39$. The prime number $73$ with $\ell = 8$ also satisfies the condition of the theorem. However, it is currently unknown whether any example exists with $\ell = 4$.


Now considet the case where $\lambda$ is a finite geometric sum of even length: $\lambda = 1 + \ell + \cdots + \ell^e$ for some even integer $e \geq 0$. Then $k - \lambda = (\ell-1) \lambda + 1 = \ell^{e+1}$. So $\ell (k - \lambda) = \ell^{e+2}$, which is clearly a perfect square. This leads to the following proposition. 

\begin{prop}
\label{prop: lambda is a geometric series of ell}
Suppose that $K$ is a $(q, k, \lambda)$-difference set. If $\lambda = 1 + \ell + \cdots + \ell^e$ for some even $e \geq 0$, then either $\ell \equiv 0 \pmod{8}$, or $\ell = a^2 + b^2$ for some odd integers $a$ and $b$.
\end{prop}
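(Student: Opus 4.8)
The plan is to build directly on the perfect-square observation made just before the statement: when $\lambda = 1 + \ell + \cdots + \ell^e$ with $e$ even, one has $k - \lambda = (\ell-1)\lambda + 1 = \ell^{e+1}$, hence $\ell(k-\lambda) = \ell^{e+2}$ is a perfect square. Feeding this into Theorem~\ref{thm: if ell(k-lambda) is a square} immediately yields the dichotomy that either $\ell \equiv 0 \pmod 4$, or $\ell = a^2 + b^2$ for some odd integers $a, b$. The second alternative is already one of the two conclusions we want, so the entire task reduces to upgrading the congruence $\ell \equiv 0 \pmod 4$ to $\ell \equiv 0 \pmod 8$ in the first case.

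For that upgrade I would first record that $\lambda$ is odd in the geometric-sum setting. Indeed, since $\ell \equiv 0 \pmod 4$ is even, every term $\ell^i$ with $i \geq 1$ in the sum $\lambda = 1 + \ell + \cdots + \ell^e$ is even, so $\lambda \equiv 1 \pmod 2$. This is the observation that connects the geometric hypothesis to the parity machinery developed earlier in the subsection.

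With $\lambda$ odd, I would invoke Lemma~\ref{lem: difference set with odd lambda}, which asserts that a $(q,k,\lambda)$-difference set with odd $\lambda$ forces either $\ell \equiv 0 \pmod 8$ (its case (i)) or $\ell \equiv 2 \pmod 8$ (its case (ii)). Since we are in the branch $\ell \equiv 0 \pmod 4$, the residue $\ell \equiv 2 \pmod 8$ is impossible, so case (ii) is ruled out and we are left with $\ell \equiv 0 \pmod 8$, which completes the argument.

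The proof is essentially a short reduction once the right earlier results are assembled, so I do not anticipate a serious computational obstacle. The one genuinely substantive point---and the step I would flag as the heart of the matter---is recognizing that the extra power of $2$ separating $\pmod 4$ from $\pmod 8$ is supplied precisely by the oddness of $\lambda$, which in turn is exactly the hypothesis that activates Lemma~\ref{lem: difference set with odd lambda}. Everything else is bookkeeping with Theorem~\ref{thm: if ell(k-lambda) is a square} and the elementary congruence for $\lambda$.
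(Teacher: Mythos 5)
Your proof is correct and follows essentially the same route as the paper: both rest on the observation that $\ell(k-\lambda) = \ell^{e+2}$ is a perfect square feeding into Theorem~\ref{thm: if ell(k-lambda) is a square}, combined with the oddness of $\lambda$ activating Lemma~\ref{lem: difference set with odd lambda} to upgrade $\ell \equiv 0 \pmod{4}$ to $\ell \equiv 0 \pmod{8}$. The only cosmetic difference is the order of application (the paper invokes the lemma first and then the theorem), which does not change the argument.
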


\begin{proof}
Since such $\lambda$ is always odd, Lemma~\ref{lem: difference set with odd lambda} implies that $\ell \equiv 0, 2 \pmod{8}$. The conclusion follows by applying Theorem~\ref{thm: if ell(k-lambda) is a square}.
\end{proof}

As an immediate corollary:

\begin{cor}
\label{cor: finite projective plane}
If $K$ is a $(q, k, 1)$-difference set, then either $8 \mid \ell$, or $\ell$ is a sum of two odd squares.
\end{cor}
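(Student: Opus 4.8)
The plan is to recognize the $(q,k,1)$-difference set hypothesis as the degenerate case of Proposition~\ref{prop: lambda is a geometric series of ell}. By definition a $(q,k,1)$-difference set has $\lambda = 1$, and this is precisely the geometric sum $\lambda = 1 + \ell + \cdots + \ell^e$ with the single term corresponding to $e = 0$, namely $\lambda = \ell^0 = 1$. Since $e = 0$ is an even nonnegative integer, the hypotheses of Proposition~\ref{prop: lambda is a geometric series of ell} are met, and its conclusion---that either $8 \mid \ell$ or $\ell = a^2 + b^2$ for odd integers $a, b$---is exactly the desired statement. So the first and essentially only step is to quote that proposition with $e = 0$.

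If one prefers to exhibit the argument without the geometric-sum packaging, I would run the underlying reasoning directly. From $\lambda = 1$ and $k = \lambda \ell + 1$ we obtain $k = \ell + 1$, hence $k - \lambda = \ell$ and $\ell(k - \lambda) = \ell^2 = q - k$, which is visibly a perfect square. Theorem~\ref{thm: if ell(k-lambda) is a square} then forces either $\ell \equiv 0 \pmod 4$ or $\ell = a^2 + b^2$ with $a, b$ odd. In the former case, the oddness of $\lambda = 1$ lets me invoke Lemma~\ref{lem: difference set with odd lambda}, which restricts $\ell$ to $0$ or $2$ modulo $8$; combined with $\ell \equiv 0 \pmod 4$ this upgrades the divisibility to $8 \mid \ell$, matching the claimed dichotomy.

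There is essentially no obstacle here: the content of the corollary is already contained in Proposition~\ref{prop: lambda is a geometric series of ell}, and the only thing to verify is the bookkeeping that $\lambda = 1$ is the $e = 0$ instance and that $e = 0$ qualifies under the ``even $e \geq 0$'' condition. The geometric framing of the preceding proposition is what makes this truly immediate, so I would present the proof in a single line invoking the proposition, relegating the explicit $\ell^2$ computation to at most a parenthetical remark for the reader who wants to see the perfect-square condition made manifest.
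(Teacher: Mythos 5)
Your proposal is correct and matches the paper exactly: the corollary is stated there as an immediate consequence of Proposition~\ref{prop: lambda is a geometric series of ell} with $e = 0$, which is precisely your main step. Your unpacked version (computing $\ell(k-\lambda) = \ell^2$, applying Theorem~\ref{thm: if ell(k-lambda) is a square}, and upgrading $\ell \equiv 0 \pmod{4}$ to $8 \mid \ell$ via Lemma~\ref{lem: difference set with odd lambda}) is also exactly the reasoning inside the paper's proof of that proposition, so nothing is missing.
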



Examples satisfying the proposition include the cases: $\linebreak (q, k, \lambda) = (31, 15, 2), (127, 63, 31), (8191, 4095, 2047), (73, 9, 1)$.


\subsection{Two additional properties}
\label{subsec: Two additional properties}

\mbox{}

From Corollary~\ref{cor: A^T A for a difference set K}, one obtains the identity \begin{align*} (B^T - \lambda J_{\ell-1}) (B - \lambda J_{\ell-1}) = (k - \lambda) I_{\ell-1} - \lambda J_{\ell-1}. 
\end{align*} Focus on the $(q', q')$-entry of the left-hand side, which equals $\sum_{j=0}^{\ell-1} ((j, q')-\lambda)^2 - ((q', q')-\lambda)^2$. Since $(j, q') \overset{*}{=} (0, j + q')$ and $(q', q') \overset{*}{=} (0, 0) = \lambda$, this simplifies to \begin{align}\sum_{j=1}^{\ell-1} ((0, j) - \lambda)^2 = k - 2 \lambda. \label{eq: sum of squares of (0,j)-lamnda} \end{align} This identity concludes the following result that provides a missing piece about $\lambda$ in Lemma~\ref{lem: difference set with odd lambda}(i). 

\begin{prop}
\label{prop: odd lambda}
If $K$ is a $(q,k,\lambda)$-difference set with odd $\lambda$ and $\ell \equiv 0 \pmod{8}$, then $\lambda \equiv 1 \pmod{4}$.
\end{prop}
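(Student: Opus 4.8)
The plan is to extract the residue of $\lambda$ modulo $4$ from the identity~\eqref{eq: sum of squares of (0,j)-lamnda}, namely $\sum_{j=1}^{\ell-1} ((0,j)-\lambda)^2 = k - 2\lambda$, by reducing both sides modulo $8$. The right-hand side is easy to handle: since $k = \lambda\ell + 1$ and $\ell \equiv 0 \pmod 8$, we get $k \equiv 1 \pmod 8$, whence $k - 2\lambda \equiv 1 - 2\lambda \pmod 8$. The real content lies in controlling the parity of each summand $(0,j) - \lambda$ on the left, and for this I would invoke Lemma~\ref{lem: Lemma I in Leh3} to pin down exactly when $(0,j)$ is odd.

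The key observation is that, by Theorem~\ref{thm: Theorem I, II, III in Leh3} (Lehmer), one has $(0,0) = \lambda$; since $\lambda$ is assumed odd, $(0,0)$ is odd, and Lemma~\ref{lem: Lemma I in Leh3} then forces $2 \in K = g^0 K$. But the cosets $g^0 K, g^1 K, \ldots, g^{\ell-1}K$ partition $\ff_q^{\times}$, so $2$ lies in exactly one of them; hence $2 \notin g^j K$ for every $j = 1, \ldots, \ell-1$. Applying Lemma~\ref{lem: Lemma I in Leh3} once more, I conclude that $(0,j)$ is \emph{even} for each such $j$. Consequently $(0,j) - \lambda$ is odd for every $j = 1, \ldots, \ell-1$, and since the square of any odd integer is $\equiv 1 \pmod 8$, each summand satisfies $((0,j)-\lambda)^2 \equiv 1 \pmod 8$.

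With this in hand, the left-hand side of~\eqref{eq: sum of squares of (0,j)-lamnda} is a sum of $\ell - 1$ terms each congruent to $1$ modulo $8$, hence is $\equiv \ell - 1 \equiv 7 \pmod 8$. Equating with the right-hand side gives $1 - 2\lambda \equiv 7 \pmod 8$, that is $2\lambda \equiv 2 \pmod 8$, which simplifies to $\lambda \equiv 1 \pmod 4$, as claimed.

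I expect the main obstacle to be the step identifying that the unique coset containing $2$ is $K$ itself, equivalently that the lone odd entry among the $(0,j)$ sits at $j = 0$. This hinges on combining the hypothesis that $\lambda$ is odd with Lehmer's formula $(0,0) = \lambda$ and Lemma~\ref{lem: Lemma I in Leh3}; once it is established, every entry appearing in the sum is even and the modulo-$8$ bookkeeping closes the argument cleanly. (One should also confirm that the hypothesis $\ell \equiv 0 \pmod 8$ of Lemma~\ref{lem: difference set with odd lambda} is consistent here, but that congruence is not otherwise needed beyond $k \equiv 1 \pmod 8$.)
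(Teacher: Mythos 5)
Your proposal is correct and follows essentially the same route as the paper's proof: both use Lemma~\ref{lem: Lemma I in Leh3} together with $(0,0)=\lambda$ odd to conclude that $(0,j)$ is even for $j=1,\ldots,\ell-1$, then reduce the identity~\eqref{eq: sum of squares of (0,j)-lamnda} modulo $8$ using $k=\lambda\ell+1$ and $\ell\equiv 0\pmod 8$. Your write-up merely makes explicit the coset argument (that $2\in K$ excludes $2\in g^jK$ for $j\neq 0$) which the paper leaves implicit.
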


\begin{proof}
Since $(0,0) = \lambda$ is odd, every $(0,j)$ is even for $j = 1, \ldots, \ell-1$ by Lemma~\ref{lem: Lemma I in Leh3} (Lehmer). Thus, $\sum_{j=1}^{\ell-1} ((0,j) - \lambda)^2 \equiv \ell-1 \pmod{8}$. It follows from~\eqref{eq: sum of squares of (0,j)-lamnda} that $k - 2 \lambda \equiv \ell - 1 \pmod{8}$. The result follows since $k = \lambda \ell + 1$. 
\end{proof}

A second property arises when $\lambda = 1$, the case corresponding to finite projective planes. 

\begin{prop}
\label{prop: lambda = 1}
If $K$ is a $(q, k, 1)$-difference set, then $(0, j) = 0$ or $2$ for all $j = 1, \ldots, \ell-1$.
\end{prop}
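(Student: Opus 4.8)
The plan is to combine the parity information supplied by Lehmer's Lemma~\ref{lem: Lemma I in Leh3} with the sum-of-squares identity~\eqref{eq: sum of squares of (0,j)-lamnda}, specialized to $\lambda = 1$. First I would record that, since $K$ is a difference set, Theorem~\ref{thm: Theorem I, II, III in Leh3} guarantees that $k$ is odd and $\lambda = (k-1)/\ell$; with $\lambda = 1$ this gives $k = \ell + 1$, so the right-hand side of~\eqref{eq: sum of squares of (0,j)-lamnda} becomes $k - 2 = \ell - 1$, leaving the identity $\sum_{j=1}^{\ell-1} ((0,j)-1)^2 = \ell-1$.

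The key observation is a parity constraint. Since $(0,0) = \lambda = 1$ is odd, Lemma~\ref{lem: Lemma I in Leh3} forces $2 \in g^0 K = K$ (note $2 \neq 0$ in $\ff_q$ as $p$ is odd). Because $\ff_q^*$ is the disjoint union of the cosets $g^i K$, the element $2$ lies in exactly one of them, so $2 \notin g^j K$ for every $j$ with $1 \le j \le \ell - 1$. Applying Lemma~\ref{lem: Lemma I in Leh3} again, each cyclotomic number $(0, j)$ with $1 \le j \le \ell-1$ is therefore even.

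With both ingredients in hand the conclusion is immediate. Each $(0,j)$ being even makes $(0,j) - 1$ an odd integer, so $((0,j)-1)^2 \ge 1$. The left-hand side of~\eqref{eq: sum of squares of (0,j)-lamnda} is then a sum of exactly $\ell - 1$ terms, each at least $1$, whose total equals $\ell - 1$; this forces every term to equal $1$. Hence $(0,j) - 1 = \pm 1$, i.e. $(0,j) \in \{0, 2\}$, for all $j = 1, \ldots, \ell - 1$.

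I do not anticipate a serious obstacle: the whole argument hinges on the coincidence that, once $\lambda = 1$, the number of summands in~\eqref{eq: sum of squares of (0,j)-lamnda} equals the value of the sum, so that the even-ness of the $(0,j)$ pins each squared term to its minimum. The one genuinely nontrivial input is that evenness, which comes from Lehmer's Lemma via $2 \in K$; the only point requiring care is the justification that $2$ lies in $K$ rather than in another coset, and this rests on $(0,0)$ being odd together with the uniqueness of the coset containing $2$.
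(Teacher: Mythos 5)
Your proof is correct and follows essentially the same route as the paper: specialize the identity~\eqref{eq: sum of squares of (0,j)-lamnda} to $\lambda = 1$, use Lemma~\ref{lem: Lemma I in Leh3} together with $(0,0)=\lambda$ odd to see that each $(0,j)-1$ is odd (hence each squared term is at least $1$), and conclude from the sum equaling $\ell-1$ that every term is exactly $1$. The paper compresses the parity step into the phrase ``each term is a positive odd integer,'' whereas you spell out the justification (uniqueness of the coset containing $2$) explicitly; this is the same argument, just more detailed.
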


\begin{proof}
If $\lambda = 1$, then $\sum_{j=1}^{\ell-1} ((0,j) - 1)^2 = k - 2 \lambda = \ell - 1$ by~\eqref{eq: sum of squares of (0,j)-lamnda}. Since each term $((0,j) - 1)^2$ is a positive odd integer, it follows that $((0,j) - 1)^2 = 1$ for each $j$, as desired. 
\end{proof}

An explicit example appears in Example~\ref{exmp: B^TB to ell=4 and 8}(II), where $\ell = 8$ and $\lambda = 1$. It would be interesting to explore whether Proposition~\ref{prop: lambda = 1} admits a combinatorial explanation in $\ff_q$ as a finite projective plane raised from $K$.

\subsection{Shifted cyclotomic matrices}
\label{subsec: Shifted cyclotomic matrices}

\mbox{}

In this subsection, we consider a variant of the cyclotomic matrix obtained by a specific row permutation, which yields a symmetric matrix whose spectral structure can be explicitly described when $K$ is a difference set. We hope this construction may find applications in the future study of power difference sets. 

Throughout this subsection, we assume that $K$ is a $(q,k,\lambda)$-difference set in $\ff_q$. Under this assumption, one has that $\ell$ is even and $q' \equiv \frac{\ell}{2} \pmod{\ell}$. Moreover, $\ell \mid k-1$ and $\lambda = (i, 0) = \frac{k-1}{\ell}$ for every $i$. 

For the cyclotomic matrix $A$, we consider the matrix $M = P A$, where $P = P_{q'}$ is the permutation matrix described in Remark~\ref{remk: conjugates of A_0}. Each $(i, j)$-entry of $M$ is the cyclotomic number $(i + q', j)$. In other words, the row of $M$ indexed by $i$ is exactly the row of $A$ indexed by $i+q'$ modulo $\ell$. Hence, $M$ is obtained from $A$ by exchanging the top $q'$ rows (indices $0, \ldots, q'-1$) with the bottom $\ell-q'$ rows (indices $q', \ldots, \ell-1$). This matrix $M$ is exactly the matrix $[c_{s,t}]$ introduced by Snapper in Remark~\ref{remk: Snapper}. Furthermore, the identity $(i + q', j) \overset{*}{=} (j + q', i)$ implies that $M$ is symmetric. Since $P^T = P = P^{-1}$, it follows that $M^2 = M^T M = (P A)^T (P A) = A^T A$. Therefore, Theorem~\ref{thm: A^T A for a difference set K} provides \begin{align} M^2 = \lambda k J_{\ell} + (k-\lambda) I_{\ell} - k E_{0,0}. \label{eq: M^2}\end{align}

On the other hand, the matrix $M$ has the block form \begin{align} M = \left( \begin{array}{c|ccc} \lambda & \lambda \mb{1}_{n}^T \\ \hline \lambda \mb{1}_{n} & S \end{array} \right) \label{eq: relation between M and S} \end{align} where $S$ is the $(0,0)$-minor of $M$, $n = \ell-1$ and $\mb{1}_{n}$ is the $n \times 1$ all-one column. Thus, \begin{align*} M^2 = \left( \begin{array}{c|ccc} 
\lambda & \lambda \mb{1}_n^T \\ \hline 
\lambda \mb{1}_{n} & S 
\end{array} \right) \cdot
\left( \begin{array}{c|ccc} 
\lambda & \lambda \mb{1}_n^T \\ \hline 
\lambda \mb{1}_{n} & S 
\end{array} \right)
=
\left( \begin{array}{c|ccc} 
\lambda^2 \ell & \lambda k \mb{1}_n^T \\ \hline 
\lambda k \mb{1}_{n} & \lambda^2 J_n + S^2 
\end{array} \right),
\end{align*} 
since $S \mb{1}_n = \lambda (k-\lambda) \mb{1}_n$.  Combining \eqref{eq: M^2}, we obtain \begin{align} S^2 = \lambda k J_n + (k-\lambda) I_n - \lambda^2 J_n = (k - \lambda) (\lambda J_n + I_n). \label{eq: square of S} \end{align} In fact, $S$ is of the form $P' B$ for some $P'$ satisfying $P'^T = P'^{-1}$, analogous to the relation $M = PA$. Consequently, $S^2 = B^T B$, which shares the same structure described in Corollary~\ref{cor: A^T A for a difference set K}. 

For a matrix $T$, we denote its spectrum by $\Spec(T) = \{a_1^{(n_1)}, a_2^{(n_2)}, \ldots\}$, where the $a_i$ are distinct eigenvalues and $n_i$ denotes the multiplicities of $a_i$. If $n_i = 1$, we simply write $a_i^{(1)} = a_i$ . We will first determine the spectrum of $S$, and then use it to obtain the spectrum of $M$.



\begin{lem}
\label{lem: spectrum of S}
The spectrum of $S$ is given by $$\Spec(S) = \left\{k - \lambda, \sqrt{k - \lambda}^{\left(\frac{\ell}{2}-1\right)}, -\sqrt{k - \lambda}^{\left(\frac{\ell}{2}-1\right)} \right\},$$ where each eigenvector corresponding to $\pm \sqrt{k - \lambda}$ has entries summing to zero. Moreover, the minimal polynomial of $S$ is $(x - (k - \lambda)) (x^2 - (k - \lambda))$.
\end{lem}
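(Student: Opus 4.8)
The plan is to recover $\Spec(S)$ from $\Spec(S^2)$, which is fixed by~\eqref{eq: square of S}, and to use the symmetry of $S$ to remove the sign ambiguity inherent in taking square roots. Since $S$ is a real symmetric matrix, it is orthogonally diagonalizable with real eigenvalues, and every eigenvalue $\mu$ of $S$ satisfies $\mu^2 \in \Spec(S^2)$. First I would diagonalize the right-hand side of~\eqref{eq: square of S}: the matrix $\lambda J_n + I_n$ has eigenvalue $\lambda n + 1 = k - \lambda$ on the line spanned by $\mb{1}_n$ (using $\lambda \ell = k-1$ and $n = \ell-1$) and eigenvalue $1$, with multiplicity $n-1 = \ell-2$, on the hyperplane $W = \{v : \mb{1}_n^T v = 0\}$ of zero-sum vectors. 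Hence $S^2$ has eigenvalue $(k-\lambda)^2$ on $\mb{1}_n$ and eigenvalue $k-\lambda$ on $W$.

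Next I would pin down the actual eigenvalues of $S$. Computing row sums of $M$ shows every row of $M$ sums to $k$ except row $0$, which sums to $k-1$; since the below-corner entries of column $0$ all equal $\lambda$, each row of $S$ sums to $k-\lambda$, so $S \mb{1}_n = (k-\lambda)\mb{1}_n$. Thus $\mb{1}_n$ is an eigenvector of $S$ for the eigenvalue $+(k-\lambda)$, not its negative. Because $S$ is symmetric and $\mb{1}_n$ is an eigenvector, $W$ is $S$-invariant, and on $W$ we have $(S|_W)^2 = (k-\lambda) I$, so the eigenvalues of $S$ on $W$ are $\pm\sqrt{k-\lambda}$, with multiplicities $m_+$ and $m_-$ satisfying $m_+ + m_- = \ell - 2$.

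The crux---and the step I expect to be the main obstacle---is to show $m_+ = m_- = \tfrac{\ell}{2} - 1$, equivalently $\tr(S|_W) = 0$, equivalently $\tr(S) = k - \lambda$, equivalently $\tr(M) = k$. I would compute $\tr(M) = \sum_{i=0}^{\ell-1}(i+q', i)$ combinatorially. Using $g^{q'}K = -K$, each term rewrites as $(i+q', i) = |(1 - g^i K) \cap g^i K|$, the number of $z \in g^i K$ with $1 - z \in g^i K$; summing over $i$ counts those $z \in \ff_q^*$ for which $z$ and $1-z$ lie in a common coset of $K$, i.e. $(1-z)/z = z^{-1} - 1 \in K$. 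Substituting $y = z^{-1}$ turns this into $|\{y \in \ff_q^* : y - 1 \in K\}| = |1 + K| = k$, where $y = 0$ is automatically excluded because $-1 \notin K$ (as $k$ odd forces $q' = \tfrac{\ell}{2} \neq 0$). This gives $\tr(M) = k$, hence $\tr(S) = k-\lambda$, forcing $(m_+ - m_-)\sqrt{k-\lambda} = 0$ and therefore $m_+ = m_- = \tfrac{\ell}{2} - 1$ since $k > \lambda$. The eigenvectors for $\pm\sqrt{k-\lambda}$ lie in $W$, so their entries sum to zero, as claimed.

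Finally, for the minimal polynomial I would use that $S$, being symmetric, is diagonalizable, so its minimal polynomial is the product of $(x - \mu)$ over the distinct eigenvalues. The three values $k - \lambda$, $\sqrt{k-\lambda}$, $-\sqrt{k-\lambda}$ are pairwise distinct: the latter two differ in sign since $k > \lambda$, while $k - \lambda \neq \pm\sqrt{k-\lambda}$ because $k - \lambda > 1$ (indeed $k - \lambda = 1$ would force $\ell = 1$, which is excluded). Hence the minimal polynomial is $(x - (k-\lambda))(x - \sqrt{k-\lambda})(x + \sqrt{k-\lambda}) = (x - (k-\lambda))(x^2 - (k-\lambda))$.
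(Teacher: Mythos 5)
Your proof is correct, and its skeleton matches the paper's: read $\Spec(S^2)$ off \eqref{eq: square of S}, use the row sums of $S$ to pin the eigenvalue $k-\lambda$ on $\mb{1}_n$, and use $\tr(S)$ to equalize the multiplicities of $\pm\sqrt{k-\lambda}$. Where you genuinely diverge is the trace computation, which you rightly identify as the crux. The paper stays inside the calculus of cyclotomic identities: $\tr(S)=\sum_{i=1}^{\ell-1}(i+q',i)\overset{**}{=}\sum_{i=1}^{\ell-1}(-i+q',q')=k-(q',q')=k-\lambda$, using Lemma~\ref{lem: basic properties for (i,j)} and $(q',q')=(0,0)=\lambda$. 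You instead count field elements: $\tr(M)$ is the number of $z\in\ff_q^*$ such that $z$ and $1-z$ lie in a common coset of $K$, and the substitution $y=z^{-1}$ identifies this set with $(1+K)\setminus\{0\}$, of size $k$ because $-1\notin K$ when $k$ is odd. Your argument is self-contained and slightly more general --- it needs only that $k$ is odd, the difference-set hypothesis entering only when you convert $\tr(M)=k$ into $\tr(S)=k-\lambda$ via $(q',0)=\lambda$ --- whereas the paper's two-line computation is shorter but leans on the starred identities. The minimal-polynomial step also differs: the paper verifies the annihilating identity $(S-(k-\lambda)I_n)(S^2-(k-\lambda)I_n)=0$ directly from $SJ_n=(k-\lambda)J_n$ and \eqref{eq: square of S}, while you invoke diagonalizability of the symmetric matrix $S$ together with distinctness of the three eigenvalues; both are sound, and your explicit check that $k-\lambda>1$ is a detail the paper leaves implicit. (Both proofs, like the lemma itself, tacitly assume $\ell\geq 4$ for the minimal-polynomial claim: when $\ell=2$ the multiplicities $\frac{\ell}{2}-1$ vanish and the minimal polynomial degenerates to $x-(k-\lambda)$.)
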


\begin{proof}
Let $n = \ell-1$, which is odd, and denote $N = k - \lambda$. From~\eqref{eq: square of S}, one has $\Spec(S^2) = \{{N^2}, N^{(n-1)}\}$ where, the eigenvalue $N^2$ corresponds the one-dimensional eigenspace spanned by ${\bf 1}_n$, and $N$ corresponds an $(n-1)$-dimensional eigenspace consisting of vectors whose entries sum to zero. Since each row sum of $S$ is $N$, it follows that $\Spec(S) = \{N, \sqrt{N}^{(a)}, - \sqrt{N}^{(b)}\}$ for non-negative integers $a, b$ with $a + b = n-1$. To determine $a$ and $b$, consider the trace of $S$: $$\sum_{i=1}^{\ell-1} (i + q', i) \overset{**}{=} \sum_{i=1}^{\ell-1} (-i + q', q') = k - (q', q') = N,$$ where the second equality follows from Lemma~\ref{lem: basic properties for (i,j)}(iv). 
This implies that $a = b = \frac{n-1}{2} = \frac{\ell}{2} - 1$. Moreover, if $w$ is an eigenvector of $S$ corresponding to the eigenvalue $\eta = \pm \sqrt{N}$, then ${\bf 1}_n^T (S w) =  \eta {\bf 1}_n^T w$. On the other hand, $({\bf 1}_n^T S) w = N {\bf 1}_n^T w$. Since $N \neq \eta$, it follows that ${\bf 1}_n^T w = 0$. 

Finally, from~\eqref{eq: square of S}, we have $(S - N I_n) (S^2 - N I_n) = 0$ because $S J_n = N J_n$, which derives the minimal polynomial of $S$.
\end{proof}

\begin{prop}
\label{prop: spectrum of M}
The spectrum of $M$ is given by $$\Spec(M) = \left\{\frac{k + \sqrt{k^2 - 4 \lambda}}{2}, \frac{k - \sqrt{k^2 - 4 \lambda}}{2}, \sqrt{k - \lambda}^{\left(\frac{\ell}{2}-1\right)}, -\sqrt{k - \lambda}^{\left(\frac{\ell}{2}-1\right)} \right\},$$ where each eigenvector corresponding to $\pm \sqrt{k - \lambda}$ is of the form $(0, w)$ for some vector $w \in \rr^{\ell-1}$ whose entries sum to zero. Moreover, the minimal polynomial of $M$ is $(x^2 - k x + \lambda)) (x^2 - (k - \lambda))$.
\end{prop}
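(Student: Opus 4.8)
The plan is to use that $M$ is a real symmetric matrix (by the discussion preceding~\eqref{eq: M^2}), hence orthogonally diagonalizable with real eigenvalues, together with the block decomposition~\eqref{eq: relation between M and S} and the spectrum of $S$ already found in Lemma~\ref{lem: spectrum of S}. The guiding idea is that every eigenvector of $S$ whose entries sum to zero lifts to an eigenvector of $M$, so most of the spectrum of $M$ is inherited from $S$, leaving only a two-dimensional complement to analyze by hand.

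First I would lift the eigenvectors of $S$. If $w \in \rr^{\ell-1}$ is an eigenvector of $S$ for $\eta = \pm\sqrt{k-\lambda}$, then Lemma~\ref{lem: spectrum of S} gives $\mb{1}_{\ell-1}^T w = 0$, and from the block form~\eqref{eq: relation between M and S},
\[ M \begin{pmatrix} 0 \\ w \end{pmatrix} = \begin{pmatrix} \lambda\,\mb{1}_{\ell-1}^T w \\ S w \end{pmatrix} = \begin{pmatrix} 0 \\ \eta\, w \end{pmatrix} = \eta \begin{pmatrix} 0 \\ w \end{pmatrix}. \]
Thus each of $\pm\sqrt{k-\lambda}$ is an eigenvalue of $M$ with eigenvectors of the stated form $(0,w)$, and these lifted vectors span a subspace $W = \{(0,w)^T : \mb{1}_{\ell-1}^T w = 0\}$ of dimension $\ell-2$, split evenly into the two eigenvalues with multiplicity $\tfrac{\ell}{2}-1$ each.

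Next I would handle the orthogonal complement. Since $W$ is $M$-invariant and $M$ is symmetric, $V = W^\perp = \Span\{\mb{e}_0, \mb{f}\}$ is $M$-invariant, where $\mb{e}_0 = (1, \mb{0})^T$ and $\mb{f} = (0, \mb{1}_{\ell-1})^T$. Using $S\mb{1}_{\ell-1} = (k-\lambda)\mb{1}_{\ell-1}$ one computes $M\mb{e}_0 = \lambda\mb{e}_0 + \lambda\mb{f}$ and $M\mb{f} = \lambda(\ell-1)\mb{e}_0 + (k-\lambda)\mb{f}$, so $M|_V$ has matrix $\left(\begin{smallmatrix} \lambda & \lambda(\ell-1) \\ \lambda & k-\lambda \end{smallmatrix}\right)$ in the basis $\{\mb{e}_0, \mb{f}\}$. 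Its characteristic polynomial is $x^2 - kx + \big(\lambda(k-\lambda) - \lambda^2(\ell-1)\big)$; substituting the relation $k = \lambda\ell + 1$ collapses the constant term to exactly $\lambda$, giving $x^2 - kx + \lambda$ and the two remaining eigenvalues $\tfrac{k\pm\sqrt{k^2-4\lambda}}{2}$. Together with the lifted eigenvalues this is the claimed $\Spec(M)$.

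Finally, the minimal polynomial follows from diagonalizability, since it equals $\prod(x-\mu)$ over the distinct eigenvalues. I would verify the four eigenvalues are pairwise distinct: $\pm\sqrt{k-\lambda}$ are distinct and nonzero because $k \neq \lambda$; neither equals a root of $x^2 - kx + \lambda$, as $\sqrt{k-\lambda}$ being such a root would force $k-\lambda = 1$, impossible for $\lambda = \tfrac{k-1}{\ell}$ with $\ell \geq 2$; and the two roots of $x^2 - kx + \lambda$ are distinct because $k^2 = 4\lambda$ would force $\ell = 1$. Hence for $\ell \geq 4$ (so that $\pm\sqrt{k-\lambda}$ actually occur) the minimal polynomial is $(x^2 - kx + \lambda)(x^2 - (k-\lambda))$. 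The main points needing care are the cancellation in the constant term of the $2\times 2$ characteristic polynomial via $k = \lambda\ell + 1$, and the separation of the four eigenvalues used for the minimal polynomial; everything else is routine block arithmetic.
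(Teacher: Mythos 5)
Your proof is correct, and its second half takes a genuinely different route from the paper's. Both arguments start the same way: lifting the eigenvectors of $S$ from Lemma~\ref{lem: spectrum of S} to eigenvectors $(0,w)$ of $M$ via the block form \eqref{eq: relation between M and S}. From there the paper works globally, proving the annihilation identity $\left( M^2 - k M + \lambda I_{\ell} \right) \left( M^2 - (k - \lambda) I_{\ell}\right) = 0$ by direct matrix algebra with \eqref{eq: M^2} and identities involving $J_{\ell}$ and $E_{0,0}$, and then reading off the spectrum by combining this with the multiplicity bound $\geq \frac{\ell}{2}-1$ for $\pm\sqrt{k-\lambda}$. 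You instead restrict $M$ to the invariant complement $W^{\perp} = \Span\{\mb{e}_0, \mb{f}\}$ and compute the $2\times 2$ restriction explicitly (your use of $S\mb{1}_{\ell-1} = (k-\lambda)\mb{1}_{\ell-1}$ is the correct row-sum fact, despite a typo at this point in the paper's derivation of \eqref{eq: square of S}), obtaining $x^2 - kx + \lambda$ as its characteristic polynomial through the cancellation $k = \lambda\ell+1$; the minimal polynomial then follows from symmetry (diagonalizability) plus pairwise distinctness of the four eigenvalues. Your route buys two things: it produces the remaining two eigenvalues and the exact multiplicities directly, without having to argue that the annihilating quartic really is the minimal polynomial (a point the paper treats tersely --- a priori the two leftover eigenvalues could have coincided with $\pm\sqrt{k-\lambda}$, which your distinctness check rules out explicitly); and it isolates the boundary case $\ell = 2$, where $\pm\sqrt{k-\lambda}$ occur with multiplicity zero and the minimal polynomial degenerates to $x^2 - kx + \lambda$, a caveat the paper's statement glosses over. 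What the paper's route buys in exchange is the explicit polynomial identity satisfied by $M$ itself, in keeping with its matrix-identity theme; note also that the paper's parity remark ($k$ odd forces $k^2 - 4\lambda \neq 0$) is a one-line alternative to your argument that the roots of $x^2 - kx + \lambda$ are distinct.
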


\begin{proof}
Let $w$ be an eigenvector of $S$ with respect to the eigenvalue $\eta = \pm \sqrt{k - \lambda}$. By \eqref{eq: relation between M and S} and Lemma~\ref{lem: spectrum of S}, one has \begin{align*}M \cdot \left( \begin{array}{c} 0 \\ w \end{array} \right) = \left( \begin{array}{c} \lambda {\bf 1}_n^T w \\ S w \end{array} \right) = \eta \left( \begin{array}{c} 0 \\ w \end{array} \right). \end{align*} Therefore, $\eta$ is also an eigenvalue of $M$, and its multiplicity is at least $\frac{\ell}{2}-1$. To determine the complete spectrum of $M$, it suffices to show that the minimal polynomial of $M$ is as stated. To this end, observe that $$J_{\ell} E_{0,0} = \sum_{i=0}^{\ell-1} E_{i, 0}, \quad (E_{0,0} + M) J_{\ell} = k J_{\ell}, \quad (E_{0,0} + M) E_{0,0} = E_{0,0} + \lambda \sum_{i=0}^{\ell-1} E_{i, 0}.$$ using the identity from~\eqref{eq: M^2}, we obtain \begin{align*} \left( M^2 - k M + \lambda I_{\ell} \right) \left( M^2 - (k - \lambda) I_{\ell}\right) = \left(\lambda k J_{\ell} + k I_{\ell} + k (E_{0,0} +M) \right) \left( \lambda k J_{\ell} - k E_{0,0} \right) = 0, \end{align*} where we have used the relation $\lambda \ell + 1 = k$. Since the polynomial $x^2 - k x + \lambda$ has two distinct roots $\frac{k \pm \sqrt{k^2 - 4 \lambda}}{2}$ (as $k$ is odd), the result follows.
\end{proof}

\begin{exmp}
\label{exmp: spectra of M and S to q=73}
According to Example~\ref{exmp: B^TB to ell=4 and 8}(II), the matrices $M$ and $S$ for $q = 73$ are given by $$M = \left[\begin{matrix} 1 & 1 & 1 & 1 & 1 & 1 & 1 & 1 \\ 1 & 0 & 3 & 0 & 2 & 1 & 1 & 1 \\ 1 & 3 & 2 & 1 & 0 & 0 & 1 & 1 \\ 1 & 0 & 1 & 2 & 0 & 1 & 3 & 1 \\ 1 & 2 & 0 & 0 & 2 & 2 & 2 & 0 \\ 1 & 1 & 0 & 1 & 2 & 0 & 1 & 3 \\ 1 & 1 & 1 & 3 & 2 & 1 & 0 & 0 \\ 1 & 1 & 1 & 1 & 0 & 3 & 0 & 2 \end{matrix} \right] \quad \text{and} \quad S = \left[\begin{matrix} 0 & 3 & 0 & 2 & 1 & 1 & 1 \\ 3 & 2 & 1 & 0 & 0 & 1 & 1 \\ 0 & 1 & 2 & 0 & 1 & 3 & 1 \\ 2 & 0 & 0 & 2 & 2 & 2 & 0 \\ 1 & 0 & 1 & 2 & 0 & 1 & 3 \\ 1 & 1 & 3 & 2 & 1 & 0 & 0 \\ 1 & 1 & 1 & 0 & 3 & 0 & 2 \end{matrix} \right],$$ where $$\Spec(M) = \left\{\frac{9+\sqrt{77}}{2}, \frac{9-\sqrt{77}}{2}, 2\sqrt{2}^{(3)}, -2\sqrt{2}^{(3)}\right\}$$ and $$\Spec(S) = \left\{8, 2\sqrt{2}^{(3)}, -2\sqrt{2}^{(3)}\right\}.$$
\end{exmp}

We are now able to determine the determinants of the cyclotomic matrix $A$ and its submatrix $B$.

\begin{cor}
\label{cor: det of A}
$\det(A) = - \lambda (k - \lambda)^{\frac{\ell}{2}-1}$ and $\det(B) = (-1)^{\frac{\ell}{2} - 1} (k - \lambda)^{\frac{\ell}{2}}$.
\end{cor}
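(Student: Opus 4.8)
The plan is to read both determinants directly off the eigenvalue data already computed, after correcting for the permutation matrices that relate $M$ to $A$ and $S$ to $B$. Since the determinant equals the product of the eigenvalues counted with multiplicity, Proposition~\ref{prop: spectrum of M} gives
\[
\det(M) = \left(\frac{k+\sqrt{k^2-4\lambda}}{2}\right)\left(\frac{k-\sqrt{k^2-4\lambda}}{2}\right)\bigl(\sqrt{k-\lambda}\bigr)^{\frac{\ell}{2}-1}\bigl(-\sqrt{k-\lambda}\bigr)^{\frac{\ell}{2}-1}.
\]
The first two factors multiply to $\lambda$ and the last two to $(-1)^{\frac{\ell}{2}-1}(k-\lambda)^{\frac{\ell}{2}-1}$, so $\det(M) = \lambda\,(-1)^{\frac{\ell}{2}-1}(k-\lambda)^{\frac{\ell}{2}-1}$. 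In the same way, Lemma~\ref{lem: spectrum of S} yields $\det(S) = (k-\lambda)\,(-1)^{\frac{\ell}{2}-1}(k-\lambda)^{\frac{\ell}{2}-1} = (-1)^{\frac{\ell}{2}-1}(k-\lambda)^{\frac{\ell}{2}}$.

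Next I would compute the two permutation signs. From $M = PA$ with $P = P_{q'}$ we get $\det(A) = \det(P)^{-1}\det(M)$. The matrix $P_{q'}$ realizes $j \mapsto j + \tfrac{\ell}{2} \pmod{\ell}$, which has no fixed point and pairs each index with its shift by $\tfrac{\ell}{2}$; hence it is a product of $\tfrac{\ell}{2}$ disjoint transpositions and $\det(P) = (-1)^{\ell/2}$. For $B$, note that $S$ and $B$ are assembled from exactly the same $\ell-1$ rows of $A$ (all rows except the one indexed by $q'$), merely listed in a different order, so $P'$ is a genuine permutation matrix and $\det(B) = \det(P')^{-1}\det(S)$. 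Comparing the two orderings shows that passing from $B$ to $S$ performs a cyclic shift by $\tfrac{\ell}{2}$ on the $\ell-1$ row positions.

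Assembling these pieces finishes the proof. Since $\ell$ is even, $\det(P)^{-1} = (-1)^{\ell/2}$ and $(-1)^{\ell/2}(-1)^{\ell/2-1} = (-1)^{\ell-1} = -1$, giving $\det(A) = -\lambda(k-\lambda)^{\frac{\ell}{2}-1}$; and with $\det(P') = 1$ we obtain $\det(B) = \det(S) = (-1)^{\frac{\ell}{2}-1}(k-\lambda)^{\frac{\ell}{2}}$.

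The step I expect to be the main obstacle is pinning down $\det(P')$ with the correct sign. Its magnitude is already transparent, since $\det(B)^2 = \det(B^T B) = (k-\lambda)^{\ell}$ by Corollary~\ref{cor: A^T A for a difference set K}, but the sign demands identifying the explicit permutation relating the row-orderings of $S$ and $B$ and checking that it is a single $(\ell-1)$-cycle. This rests on the arithmetic fact $\gcd\!\bigl(\tfrac{\ell}{2},\,\ell-1\bigr) = 1$, which forces the cyclic shift by $\tfrac{\ell}{2}$ to act transitively on the $\ell-1$ positions; its sign is then $(-1)^{(\ell-1)-1} = +1$ because $\ell$ is even.
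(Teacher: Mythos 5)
Your proof is correct and follows essentially the same route as the paper: both read $\det(M)$ and $\det(S)$ off the spectra in Proposition~\ref{prop: spectrum of M} and Lemma~\ref{lem: spectrum of S}, then transfer to $\det(A)$ and $\det(B)$ via the signs of the row permutations $M=PA$ and $S=P'B$. The only (immaterial) difference is bookkeeping: the paper computes those signs as block swaps, $(-1)^{q'^2}$ and $(-1)^{q'(q'-1)}$, while you compute them as $\ell/2$ disjoint transpositions and as a single $(\ell-1)$-cycle using $\gcd\bigl(\tfrac{\ell}{2},\ell-1\bigr)=1$; both give $(-1)^{\ell/2}$ and $+1$, respectively.
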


\begin{proof}
Recall that $M$ is obtained from $A$ by exchanging the rows indexed by $0, \ldots, q' - 1$ with those indexed by $q', \ldots, \ell-1$. Thus, $\det(M) = (-1)^{q'^2} \det(A) = (-1)^{q'} \det(A)$. Form Proposition~\ref{prop: spectrum of M}, we know that $\det(M) = \lambda (-1)^{\frac{\ell}{2}-1} (k - \lambda)^{\frac{\ell}{2}-1}$, and since $q' = \frac{\ell}{2}$, we obtain the expression for $\det(A)$. Similarly, $S$ is obtained from $B$ by exchanging the rows indexed by $0, \ldots, q' - 1$ with those indexed by $q'+1, \ldots, \ell-1$. Hence, $\det(S) = (-1)^{q' (q'-1)} \det(B) = \det(B)$, and the result follows from Lemma~\ref{lem: spectrum of S}. 
\end{proof}

In general, the spectrum of the cyclotomic matrix $A$ tends to be complicated. For instance, when $q$ is a prime and $\ell = 4$, one can deduce from~Example~\ref{exmp: B^TB to ell=4 and 8}(I) that the characteristic polynomial $A$ is $x^4 - (k-1) x^3 - \frac{k-1}{2} x^2 - \lambda (k+1) x - \lambda (k - \lambda)$. As a concrete example, for $q = 37$, the characteristic polynomial of $A$ is $x^4 - 8 x^3 - 4 x^2 - 20 x - 14$ which is irreducible over rational numbers.




\subsection{Modified power difference set}
\label{subsec: Modified power difference set}

\mbox{}

A power difference set consists only of nonzero elements, whereas a {\it modified power difference set} includes the zero element as well. Denote $K_0 = K \cup \{0\}$. Lehmer also determined the conditions under which $K_0$ is a difference set in terms of cyclotomic numbers. We restate and prove her result below, as the original formulation was not presented in full detail. 

\begin{thm}[{\cite[Theorem~III$'$]{Leh3}}]
\label{thm: Theorem III' in Leh3}
Let $K_0$ consist of all $\ell$-th powers of $\ff_q$, and let $k_0 = |K_0|$. Then $K_0$ is a difference set of $\ff_q$ if and only if $k_0$ is even, $\ell$ is even and $(0,0) + 1 = (\frac{\ell}{2}, 0) + 1 = (i, 0)$ for $1 \leq i \neq \frac{\ell}{2} \leq \ell-1$. In this case, $(i, 0) = \frac{k_0}{\ell}$.
\end{thm}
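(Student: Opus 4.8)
The plan is to mimic the proof of Theorem~\ref{thm: Theorem I, II, III in Leh3}, now counting representations of a nonzero element as a difference of two elements of $K_0 = K \cup \{0\}$. Fix a nonzero $z \in g^i K$ and let $N(i)$ denote the number of pairs $(x,y) \in K_0 \times K_0$ with $x - y = z$. Splitting according to whether $x$ and $y$ vanish: the subcase $x, y \in K$ contributes exactly $(i,0)$, by the computation already carried out in the proof of Theorem~\ref{thm: Theorem I, II, III in Leh3}; the subcase $x \in K$, $y = 0$ contributes $1$ precisely when $z \in K$, i.e. $i \equiv 0 \pmod{\ell}$; the subcase $x = 0$, $y \in K$ contributes $1$ precisely when $-z \in K$, i.e. $z \in g^{q'}K$, i.e. $i \equiv q' \pmod{\ell}$; and $(0,0)$ is excluded since $z \neq 0$. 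Therefore
$$N(i) = (i,0) + \mathbf{1}[i \equiv 0] + \mathbf{1}[i \equiv q'],$$
a quantity depending only on the coset index $i$. Since $\ff_q^*$ is the disjoint union of the $g^i K$, the set $K_0$ is a difference set if and only if $N(i)$ is independent of $i$.

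Summing $N(i)$ over $i = 0, \ldots, \ell-1$ and using part~(iv) of Lemma~\ref{lem: basic properties for (i,j)} (which gives $\sum_i (i,0) = k-1$), together with the fact that exactly one index satisfies $i \equiv 0$ and exactly one satisfies $i \equiv q' \pmod{\ell}$, I get $\sum_i N(i) = k+1 = k_0$. Hence, once $N(i)$ is constant, its common value is forced to be $k_0/\ell$, yielding the final assertion $(i,0) = k_0/\ell$.

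It remains to analyze the two possibilities for $q'$ from~\eqref{eq: value of q'}. If $k$ is odd, then $q' = \tfrac{\ell}{2} \neq 0$, the two indicator terms sit at distinct indices, and constancy of $N$ reads exactly $(0,0)+1 = (\tfrac{\ell}{2},0)+1 = (i,0)$ for $1 \le i \ne \tfrac{\ell}{2} \le \ell-1$; moreover $k$ odd forces $\ell$ even (as $q-1 = \ell k$ is even) and $k_0 = k+1$ even. Since this equivalence runs in both directions, it simultaneously handles the backward implication (given $k_0$ even, $\ell$ even, and the cyclotomic condition) and the final step of the forward implication once $k$ is known to be odd.

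The main obstacle is ruling out $k$ even in the forward direction. Here $q' = 0$, so both indicators land at $i = 0$; constancy of $N$ then forces $(i,0) = \lambda_0$ for all $i \neq 0$ and $(0,0) = \lambda_0 - 2$, where $\lambda_0 = k_0/\ell$. I plan to derive a contradiction from parity: by Corollary~\ref{cor: basic properties for (i,j)} the matrix is symmetric when $k$ is even, so $(i,0) = (0,i)$, while Lemma~\ref{lem: Lemma I in Leh3} shows that exactly one index $i^*$ (the coset of $2$) makes $(0,i^*)$ odd. If $i^* = 0$ then $(0,0)$ is odd but every $(0,i)$ with $i \neq 0$ is even, forcing $\lambda_0$ even and hence $(0,0) = \lambda_0 - 2$ even, a contradiction; if $i^* \neq 0$ then $(0,i^*) = \lambda_0$ is odd while $(0,0)$ is even, forcing $\lambda_0$ even, again a contradiction. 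Thus $k$ must be odd, completing the forward implication. The only delicate point is keeping the two indicator contributions straight when $q' = 0$ and tracking precisely which cyclotomic numbers the difference-set constancy constrains.
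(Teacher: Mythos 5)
Your proposal is correct and follows essentially the same route as the paper: the same case-split of pairs in $K_0 \times K_0$ giving $N(i) = (i,0) + \mathbf{1}[i \equiv 0] + \mathbf{1}[i \equiv q']$, the same use of Lemma~\ref{lem: Lemma I in Leh3} together with the symmetry $(i,0)=(0,i)$ to rule out $k$ even by parity, and the same row-sum computation yielding $\lambda_0 = k_0/\ell$. The only cosmetic difference is that you split the parity contradiction into two cases according to the coset of $2$, where the paper simply observes that all $(0,i)$ would share one parity, contradicting the fact that exactly one of them is odd.
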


\begin{proof}
Note that $k = |K| = k_0 - 1$. Let $z \in g^i K$ for $0 \leq i \leq \ell-1$. Since $K_0 = K \cup \{0\}$, we consider the possibilities where $z = x - y$ for $(x, y) \in K \times K$, $(x, y) = (0, y)$, or $(x, y) = (x, 0)$. Following the same reasoning as in the proof of Theorem~\ref{thm: Theorem I, II, III in Leh3}, one has $$\left| \left\{ (x, y) \in K_0 \times K_0 \mid x - y = z \right\} \right| = \left\{ \begin{array}{ll} (i, 0) + 2 & \text{if $g^i K = K = -K$,} \\ (i, 0) + 1 & \text{if $g^i K = K$ or $-K$, and $K \neq - K$,} \\ (i, 0) & \text{otherwise.} \end{array} \right.$$ Note that the cases $g^i K = K$ and $g^i = -K$ correspond to $i = 0$ and $i = q'$, respectively. Suppose now that $K_0$ is a difference set. If $k$ even, then $K = -K$, and we have $(0, 0) + 2 = (1, 0) = \cdots = (\ell-1, 0)$. In this situation, $q' = 0$ and $(i, 0) \overset{*}{=} (0, i)$. So all $(0, i)$ have the same parity, which contradicts to Lemma~\ref{lem: Lemma I in Leh3}. Therefore, $k$ must be odd, $\ell = \frac{q-1}{k}$ is even, $K \neq -K$ and $q' = \frac{\ell}{2}$. Moreover, $(0, 0)+1 = (q', 0) + 1 = (i, 0)$ for all $1 \leq i \neq \frac{\ell}{2} \leq \ell-1$, and these equalities imply conversely that $K_0$ is a difference set. Let $\lambda_0 = (i, 0)$. Then $k - 1 = \sum_{i=0}^{\ell-1} (i, 0) = \ell \lambda_0 - 2$, which gives $\lambda_0 = \frac{k+1}{\ell}$.
\end{proof}

When $\ell=2$, one can deduce from Lemma~\ref{lem: basic properties for (i,j)} that $k$ is odd if and only if $(0, 0) = (1, 0)$. Thus, $K_0$ is a difference set if and only if $q \equiv 3 \pmod{4}$. 
A detailed discussion of when $K_0$ is a difference set for $\ell = 4$ or $8$ can be found in \cite[Section~7]{Leh3}.

Following a similar approach as in the proof of Theorem~\ref{thm: A^T A for a difference set K}, we can derive an expression for $A^T A$ when $A$ is the cyclotomic matrix associated with a modified power difference set.

\begin{prop}
\label{prop: A^T A for a modified difference set K}
Let $K_0$ be the set of all $\ell$-th powers in $\ff_q$ with $\ell \mid q-1$, and let $A$ be a cyclotomic matrix with respect to $\ell$. If $K_0$ is a $(q, k_0, \lambda_0)$-difference set in $\ff_q$, then $$A^T A = \lambda_0 (k_0-1) J_{\ell} + (k_0 - \lambda_0-1) I_{\ell} - (k_0-1) E_{0,0} - A - A^T.$$
\end{prop}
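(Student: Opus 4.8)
The plan is to follow the template of the proof of Theorem~\ref{thm: A^T A for a difference set K}, starting from the $u=v=0$ specialization of the transposed product formula \eqref{eq: product of cyclotomic matrices 2}, namely
$$A^T A = k(I_\ell - E_{0,0}) + \sum_{w=0}^{\ell-1} (q', w)\, A_w,$$
where $k = |K| = k_0 - 1$. The backbone of the computation is identical to the power-difference-set case; the only ingredient that changes is the evaluation of the scalar coefficients $(q', w)$, and this is where the difference-set hypothesis on $K_0$ (as opposed to $K$) enters.

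First I would rewrite the coefficients. Since $K_0$ is a difference set, Theorem~\ref{thm: Theorem III' in Leh3} forces $k$ odd, $\ell$ even, and $q' = \frac{\ell}{2}$, so that $2q' \equiv 0 \pmod{\ell}$. Applying property~(i) of Lemma~\ref{lem: basic properties for (i,j)} then gives $(q', w) = (w+q', 2q') = (w+q', 0)$. Theorem~\ref{thm: Theorem III' in Leh3} also tells us that $(i, 0) = \lambda_0$ for every $i \not\equiv 0, q' \pmod{\ell}$, whereas $(0,0) = (q',0) = \lambda_0 - 1$. The key bookkeeping step is to track which values of $w$ land on the two deficient indices: $w + q' \equiv 0$ forces $w \equiv q'$, and $w + q' \equiv q'$ forces $w \equiv 0$. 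Hence exactly the two terms $w=0$ and $w=q'$ carry the coefficient $\lambda_0 - 1$, and all others carry $\lambda_0$, so that
$$\sum_{w=0}^{\ell-1} (q', w)\, A_w = \lambda_0 \sum_{w=0}^{\ell-1} A_w - A_0 - A_{q'}.$$

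To finish, I would substitute the general identity $\sum_{w=0}^{\ell-1} A_w = k J_\ell - I_\ell$ (established inside the proof of Theorem~\ref{thm: A^T A for a difference set K} and valid without any difference-set assumption), together with $A_0 = A$ and $A_{q'} = A^T$ coming from $A_{v+q'} = A_v^T$. Plugging these into the expression for $A^T A$, replacing $k$ by $k_0 - 1$, and merging the two multiples of $I_\ell$ into $(k_0 - \lambda_0 - 1) I_\ell$ yields the stated formula. The only delicate point is the accounting in the middle step: one must confirm that precisely $w=0$ and $w=q'$ produce the reduced coefficient $\lambda_0 - 1$, since it is exactly these two deficient coefficients that generate the extra $-A - A^T$ term distinguishing this identity from the $K$-difference-set formula in Theorem~\ref{thm: A^T A for a difference set K}. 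Everything beyond that is routine linear-algebraic simplification.
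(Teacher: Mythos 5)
Your proof is correct and follows exactly the route the paper intends: the paper only sketches this proposition by saying it follows the proof of Theorem~\ref{thm: A^T A for a difference set K}, and your argument fills in that sketch faithfully --- specializing \eqref{eq: product of cyclotomic matrices 2} at $u=v=0$, using Theorem~\ref{thm: Theorem III' in Leh3} to see that $(q',w)=(w+q',0)$ equals $\lambda_0-1$ precisely for $w\equiv 0,\,q'$ and $\lambda_0$ otherwise, and then substituting $\sum_w A_w = kJ_\ell - I_\ell$, $A_{q'}=A^T$, and $k=k_0-1$. The bookkeeping of the two deficient indices, which produces the extra $-A-A^T$ term, is handled correctly.
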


The preceding equation can be rearranged into a more compact form: $$(A^T + I_{\ell}) (A + I_{\ell}) = \lambda_0 (k_0-1) J_{\ell} + (k_0 - \lambda_0) I_{\ell} - (k_0-1) E_{0,0}.$$ Moreover, let $B_0$ be the $(q', 0)$-minor of $A + I_{\ell}$. Then we further obtain $$B_0^T B_0 = \lambda_0 (k_0 - \lambda_0-1) J_{\ell-1} + (k_0 - \lambda_0) I_{\ell-1}.$$ We hope this expression will be useful in future investigations of modified power difference sets.



\section{Concluding Remark}
\label{sec: Concluding Remark}

We observed below Lemma~\ref{lem: cyclotomic numbers and Schur ring} that the basic operations of Schur rings yield certain properties of cyclotomic numbers (Lemma~\ref{lem: basic properties for (i,j)}). It is natural to ask whether cyclotomic numbers can be further understood through the algebraic structure of Schur rings. Such structures may impose additional constraints on cyclotomic numbers.  

The reader may notice that all examples of power difference sets with $\ell = 4$ or $\ell = 8$ presented in this article occur in prime fields. Lehmer's original proof on the sufficient and necessary conditions for the existence of $4$-th and $8$-th power difference sets in prime fields can be extended to general finite fields $\ff_q$. Specifically, the condition for $4$-th powers is $q = 1 + 4 t^2$ for odd $t$, and for $8$-th powers is $q = 1 + 8 t^2 = 9 + 64 s^2$ for odd $t$ and $s$. However, it has been shown by M. Hall \cite[Theorem~4.1]{MHall3} and T. Storer \cite[Theorem~20]{Sto2} that these cases occur only when $q$ is prime. This leads to the following interesting problem.

\begin{prob}
If $K$ is a power difference set of $\ff_q$ with $\ell > 2$, must $q$ be a prime?
\end{prob}


From Lehmer's conditions above, one observes that $|K| = t^2$ is a perfect square when $K$ is a difference set for $\ell=4$ or $\ell=8$. 
This naturally leads to the question of whether this phenomenon holds in general.

\begin{prob}
If $K$ is a power difference set of $\ff_q$ with $\ell > 2$, must $|K|$ be a perfect square?
\end{prob}

If the answer is affirmative, then $k \equiv 1 \pmod{8}$, implying $\lambda \ell \equiv 0 \pmod{8}$. In particular, this would yield $\ell \equiv 0 \pmod{8}$ when $\lambda$ is odd (e.g. $\lambda = 1$ in the case of a projective plane). 

\section*{Acknowledgment}

The author acknowledges support from National Science and Technology Council (NSTC) grants 112-2811-M-003-017 and 114-2115-M-017-001-MY2.



\bibliographystyle{alphaurl} 
\bibliography{../../../research/bib/Bib} 

\end{document}